\tikzset{
  state/.style={circle,draw,minimum size=6ex},
  arrow/.style={-latex, shorten >=1ex, shorten <=1ex}}
\newcommand{\E}{\mathbb{E}}
\newcommand{\R}{\mathbb{R}}
\newcommand{\cP}{\mathcal{P}}
\newcommand{\N}{\mathbb{N}}
\newcommand{\F}{\mathcal{F}}
\newcommand{\G}{\mathcal{G}}
\newcommand{\A}{\mathcal{A}}
\newcommand{\bP}{\mathbb{P}}
\newcommand{\cL}{\mathcal{L}}
\newtheorem{theorem}{Theorem}[section]
\newtheorem{definition}{Definition}
\newtheorem{proposition}[theorem]{Proposition}
\newtheorem{lemma}[theorem]{Lemma}
\newtheorem{corollary}[theorem]{Corollary}
\newtheorem{remark}{Remark}
\title{Bayesian Nonparametric Inference for M/G/1 Queueing Systems}
\author[1]{Moritz von Rohrscheidt\thanks{rohrscheidt@uni-heidelberg.de}}
\author[2]{Cornelia Wichelhaus \thanks{wichelhaus@mathematik.tu-darmstadt.de}}
\affil[1]{Ruprecht-Karls Universit{\"a}t Heidelberg}
\affil[2]{Technische Universit{\"a}t Darmstadt}
\begin{document}

\maketitle

\begin{abstract}
In this work, nonparametric statistical inference is provided for the continuous-time M/G/1 queueing model from a Bayesian point of view. The inference is based on observations of the inter-arrival and service times. Beside other characteristics of the system, particular interest is in the waiting time distribution which is not accessible in closed form. Thus, we use an indirect statistical approach by exploiting the Pollaczek-Khinchine transform formula for the Laplace transform of the waiting time distribution.   Due to this, an estimator is defined and its frequentist validation in terms of posterior consistency and posterior normality is studied. It will turn out that we can hereby make inference for the observables separately and compose the results subsequently  by suitable techniques.
\end{abstract}

{\bf Keywords:} Bayesian Statistics, Nonparametric Inference, Queues in Continuous Time \\

\section{Introduction}

Much attention has been drawn to the analysis and the statistics of queueing systems. Beside approaches to statistical inference for queueing systems in the classical frequentist sense, during the last three decades there has taken place mentionable research in what is frequently called Bayesian queues. By that we mean inference for queueing systems from the Bayesian point of view. Bayesian statistical approaches are often feasible and useful in the area of Operations Research since one is able to express prior knowledge about the system which is often present, see also e.g. the foreword by Dennis Lindley in \citet{de1974theory}. It is worth to mention here some of the recent works on Bayesian queues. \citet{armero1994prior} deals with Markovian queues in the Bayesian sense, i.e. queues with a homogeneous Poisson process as input stream and exponential service times with varying number of service stations. In this parametric model, the parametrically conjugate Gamma prior is put on the parameter space which contains the arrival and service parameter with the aim to evolve a Bayesian test for the stability of the  queueing system.\\
This work forms the base for further works on Bayesian inference for systems with Markovian characters under several generalizations cf. e.g. \citet{armero1998inference, armero2004statistical, armero2006bayesian} and \citet{ausin2007bayesian, ausin2008bayesian}.\\
Generalizations in many ways were performed as for example a non-Markovian setting for the inter-arrival times, cf.  \citet{wiper1998bayesian} and \citet{ausin2007bayesian, ausin2008bayesian}.
Assuming the arrival stream of customers to be Poisson is often well-suited and ensures desirable properties of the stochastic process in the background of the system, meaning that there is a description in terms of an embedded Markov chain. In contrast, the service times often have to be modeled to be governed by a more general distribution. Moreover, more flexible classes of service time distributions often preserve the aforementioned property. Thus, it is reasonable to generalize the distribution of the service times which, in the classical parametric treatment, is taken to be exponential. Models with generalizations of the service time distribution can be found in \citet{insua1998bayesian}, where Erlang and hyperexponential distributions are employed to model the service times more flexibly. 
\citet{ausin2004bayesian} use the well known probabilistic result that the class of phase-type distributions forms a dense set in the space of all probability distributions  $[$cf.~\citet[p.84]{asmussen2008applied}$]$. They model the service time distribution semi-parametrically, assign prior distributions to its parameters for eventually inferring the system by MCMC procedures.\\
However, one is often interested in a even  more general approach for modeling the service time distribution which leads to a non-parametric Bayesian approach to estimate the unknown service time distribution.
The first work in a discrete-time framework concerning this was given by \citet{conti1999larges} for a $Geo/G/1$ system which can be seen to be the discrete-time analog of the continuous-time $M/G/1$.  $Geo/G/1$ is frequently used to model communication systems where information encoded in packages of fixed size is transmitted by a serving unit. The server is assumed to be able to transmit one package during one time slot. Hence, the randomness of the service times can rather be seen to be given in form of random batch sizes, i.e. marks of the point process governing the arrival stream. Employing a Dirichlet process prior for the distribution of the magnitude of these marks, Conti obtains estimators for various characteristics of the queue by a well known functional relationship $[$see also e.g. \citet{grubel1992functional}$]$ of the inter-arrival and the service time distribution with the waiting time distribution. Since the target of Conti's work was to obtain estimates for the waiting times of customers, he took this rather indirect approach mainly because assigning a prior distribution to the waiting time distribution and updating it by data consisting of the marked arrival stream is infeasible.
He obtained large sample properties for the estimator of the probability generating function of the waiting time distribution. These were uniform posterior consistency and a Bernstein-von Mises type result. The former roughly says that the posterior distribution, i.e.~the prior updated by the data, will center around the true value while the latter forms a Bayesian analog to the central limit theorem and gives an idea how this centering looks like. For the Bernstein-von Mises theorem results in \citet{freedman1963asymptotic} were employed.\\

The aim of the present work is to take a similar way of Bayesian statistical inference for the continuous-time $M/G/1$ system which is the more appropriate model in many situations where time-continuity is more reasonable. For example one could think of customers arriving at a cashier in a supermarket, cars arriving at a traffic jam, goods arriving at a storing center and many others. Therefor, a continuous-time analog for the functional relationship of observables and objects of interest is used which is known as the functional Pollaczek-Khinchine formula in honor of Felix Pollaczek and Aleksandr Khinchine who derived the steady state behaviour of the $M/G/1$ system in the 1930s. Thereby, the philosophy is analog to that of \citet{conti1999larges}, meaning that the main target will be the nonparametric estimation of the waiting time distribution on basis of observations of the arrival stream and the service times. However, a richer class of prior distributions is used which allows to express prior knowledge more flexibly. We feel that this is more reasonable for a system evolving in continuous time. The combination of this larger class of prior distributions together with the assumption of continuous time leads to more intricate proofs of results related to the discrete-time analog. As usual, the Bayesian approach is appreciable if prior knowledge and only few data is available. However, since typically large data samples are accessible, large sample properties as posterior consistency and posterior normality are investigated.\\

The paper is organized the following way. In section 2 the underlying queueing model is introduced and the functional relationship between observables and characteristics of interest is briefly described. Section 3 is devoted to the assignment of prior distributions to the random distributions governing the observables. Therefor, we review some facts of Bayesian statistics and call to mind a large family of non-parametric priors for eventually obtaining suitable estimators. Subsequently, frequentist validations of the suggested estimators is given in section 4 and section 5. While section 4 deals with the concentration of the posterior law of the random quantities in form of posterior consistency results, section 5 is devoted to the question how these concentrations take place  yielding Bernstein-von Mises type results.

\section{The Queueing Model}

The present section is devoted to the underlying queueing model.
The model is chosen, in Kendall's notation $[$\citet{kendall1953stochastic}$]$, to be the $M/G/1/\infty$-FIFO system. This notation means that (\textbf{M}) indistinguishable customers arrive consecutively according to a homogeneous Poisson process. Here, the \textbf{M} stands for "Markovian" to depict the memorylessness property of the exponential distribution which governs the independent inter-arrival times $(A_n)_{n \in \mathbb{Z}}$. Note that most of the appreciable properties of the $M/G/1$ system are due to this assumption.
Moreover, customers requiring service are served according to a general service time distribution (\textbf{G}) concentrated on $\R_+$. The service times are assumed to form a sequence of i.i.d.~random variables $(S_n)_{n \in \mathbb{Z}}$. The consecutive services are accomplished  by one (\textbf{1}) reliable service station in a first-in-first-out (\textbf{FIFO}) manner.
 Customers who cannot be served immediately are stored in an infinitely large waiting-room (\textbf{$\infty$}) and thus form a queue which, provided the queue is stable and in steady state, is distributed according to a stationary distribution.
For the sake of clarity of what follows, we introduce the following notation. We hereby assume that all random variables are defined on a common underlying probability space $(\Omega, \A, \bP).$

\begin{itemize}
\item $Q_t$ $\equiv$ queue length at time $t \in \mathbb{R}$
\item $N_t$ $\equiv$ number of customers in the entire system at time $t \in \mathbb{R},$ \\ i.e.~$N_t = Q_t +1$ in case $Q_t > 0$ for $t \in \mathbb{R}$
\item $W_n$ $\equiv$ waiting time of customer $n\in \mathbb{Z}$
\item $D_n$ $\equiv$ sojourn time in the system of customer  $n\in \mathbb{Z}$
\item $A_n$ $\equiv$ time between arrival of $n$-th and $(n-1)$-st customer $n\in \mathbb{Z}$
\item $S_n$ $\equiv$ service time of customer $n\in \mathbb{Z}$
\end{itemize}
Letting $\lambda$ denote the arrival rate of the Poisson arrival stream, $\mu$ the mean of the amount of time the server needs to complete the service of a customer and defining the so-called traffic intensity $\rho:=\E[S]/\E[A]=\lambda \mu$, a well known assumption for the existence of a stationary distribution for $N$ is $\rho<1$. If the queue is stable, by stationarity it is common to denote above quantities without an index tacitly assuming that the system has already run for an infinitely large time-horizon. If additional one assumes $\E[S^2]<\infty$ it can be shown that
\begin{align*}
\E[N]=\rho+\frac{\rho^2+\lambda\textsf{Var}[S]}{2(1-\rho)}.
\end{align*}
This formula is known as the Pollaczek-Khinchin mean formula and relates the mean of the distribution of $N$ to that of $A$, $S$ and to the variance of $S$, $\textsf{Var}[S]$. However, this formula only relates parameters of aforementioned distributions to each other instead of the entire distributions. However, in Bayesian statistics one is typically interested in the entire distribution of a parameter one is uncertain in. A functional relationship is given by the Pollaczek-Khinchine transform formula. To state it, define the following objects. Let $n(z)=\sum_{k=0}^{\infty} z^k \bP(N=k)$ be the probability generating function (p.g.f.) of the distribution of $N$ and
$g(z)=\int_0^{\infty}e^{-zs} dG(s)$ the Laplace-Stieltjes transform (LST) of the service time distribution. Then the following functional relationship can be obtained, see e.g. \citet{haigh2004prob} or \citet{nelson2013probability}.
\begin{align*}
n(z)=g(\lambda(1-z))\frac{(1-z)(1-\rho)}{g(\lambda(1-z))-z}, \qquad z\in[0,1].
\end{align*}
Manipulations of this formula give
\begin{align*}
&q(z)=\frac{(1-z)(1-\rho)}{g(\lambda(1-z))-z}, \qquad z\in[0,1],  \qquad \text{ and }
&w(z)=\frac{z(1-\rho)}{z-\lambda(1-g(z))}  , \qquad z\in \R_+,
\end{align*}
where $q(z)$ denotes the p.g.f.~of the queue-length distribution and $w(z)$ the LST of the waiting-time distribution. The latter two quantities are of special statistical interest since they provide the essential information about the development of the queue.

\section{Prior Assignments and Estimators}

All the transforms $n(z)$, $q(z)$ and $w(z)$ introduced in section 2 depend on the arrival rate $\lambda$ and on the LST of the service time distribution $g(z)$. These values are typically assumed to be unknown and thus need to be inferred. For this inference, we choose a Bayesian approach which is further introduced in the present section. Since it does not make sense to a subjectivist statistician to talk about "fixed but unknown" parameters $[$see \citet{de1974theory}$]$, \\  another philosophical concept is used. That is one interprets the "fixed but unknown" parameter as a random quantity itself. This approach leads to the concept of exchangeability which provides a "meaningful, observable character" of the data.  This approach is briefly reviewed. Let $(\Omega, \A, \bP)$ be an abstract probability space.
Call an infinite sequence of random variables $(X_i)_{i=1}^{\infty}$  with $X_i:\Omega\rightarrow \R, i \in \mathbb{N},$  exchangeable if for any $m\in\mathbb{N}$ and any permutation $\pi$ of $m$ elements it holds that
\begin{align}
\mathcal{L}\left[X_1,\dotsc,X_m\right]=\mathcal{L}\left[X_{\pi(1)},\dotsc,X_{\pi(m)}\right], \tag{E}
\end{align}
where $\mathcal{L}$ denotes the joint law of the respective random object.\\

Now, assume that we observe $n$ inter-arrival times $A_1^n=(A_1,\dotsc,A_n)$ between the first $(n+1)$ consecutive customers as well as the service times $S_1^n=(S_1,\dotsc,S_n)$ of the first $n$ customers. The data $A_1^n$ and $S_1^n$ are assumed to be the first $n$ projections of two independent sequences of exchangeable random variables $A_1^{\infty}$ and $S_1^{\infty}$ which, in turn, are assumed to be independent of each other, i.e. $\mathcal{L}(S_i;i\in I|A_j;j\in J)=\mathcal{L}(S_i;i\in I)$ for all finite subsets $I,J\subset \N$.\\

We turn to de Finetti's theorem for Polish spaces $[$\citet{hewitt1955symmetric}$].$  Let $\cP(\mathcal{S})$ denote the space of all probability measures on some Polish space $\mathcal{S}$ and consider in particular
$\cP(\R)$ equipped with the topology of weak convergence of measures. This leads to a measurable space $(\cP(\R),\mathfrak{B}_{\cP(\R)})$ which is itself Polish $[$e.g.~\citet{kechris1995descriptive}$]$. By the de Finetti theorem for exchangeables, it holds for all $n\in \N$ and for all measurable subsets $A_i\subset \R$, $i=1,\dotsc,n$ that $X_1^{\infty}=(X_1,X_2\dotsc)$ is exchangeable if and only if there is a unique mixing measure $\nu\in\cP(\cP(\R))$ such that
\begin{align*}
\bP(X_i\in A_i ; i=1,\dotsc,n)=\int\limits_{\cP(\R)} \prod\limits_{i=1}^n P(X_i\in A_i) \nu(dP).
\end{align*}

The right-hand side reflects the equivalent property of the sequence $X_1^{\infty}$ being exchangeable the following way.  The data $X_1^{\infty}$ are conditionally i.i.d.~given some probability measure $P\in\cP(\R)$, in symbols write $X_1^{\infty}|P \sim \bigotimes_{\mathbb{N}} P$. The probability measure $P$ itself is random and distributed according to the mixing measure $\nu$ which is called the prior distribution from a Bayesian statistical point of view. Moreover, notice that unconditionally the data $X_1^{\infty}$ are in general not independent. Indeed, by a result from \citet{kingman1978uses}, it is easily seen that in general  data are positively correlated, a fact that makes Bayesian statistics a theory of statistical prediction and thus convenient for other theories as e.g. machine learning.

For actual applications, however, the prior $\nu$ and the integration in de Finetti's theorem becomes infeasible in general. This is mainly due to the fact that  one has merely a non-constructive proof of the existence of $\nu$. Another difficulty is that the set $\cP(\R)$ is quite large. One has mainly two ways to choose to circumvent this problem in applications. The first is to shrink the support of $\nu$ to a reasonable subset of $\cP(\R)$ by an additional "symmetry constraint" on the sequence of random variables, thus, to put  additional information on the data, to boil down above integration to a finite dimensional parameter space. The second is to model $\nu$ non-parametrically by genius probabilistic tools. We use the first approach here  to get a prior distribution for the random arrival rate $\lambda: \Omega \rightarrow \R_+$ and the second to get a prior distribution for the random service time distribution $G:\Omega \rightarrow \cP(\R_+)$.

\subsection{Arrival Rate}
 In case of the inter-arrivals an additional symmetry assumption on the law of the infinite exchangeable sequence $A_1^{\infty}$ will give rise to a mixing measure which is supported by all exponential distributions. This additional symmetry condition is stated as follows. Let $n\in \N$ and for $i=1,...,n$, $B_i \in \mathfrak{B}_{\R_+}$, the Borel sigma-field on $\R_+$. Furthermore, let $c_i\in \R$ be real constants such that $\sum_{i=1}^n c_i=0$ and $C_i=c_i+B_i=\{c_i+r : r\in B_i\} \subset \R_+$, $i=1, \dots, n.$ The law of the exchangeable sequence $A_1^{\infty}$ fulfills the symmetry condition
\begin{align}
\bP(A_i\in B_i; i=1,...,n)=\bP(A_i\in C_i; i=1,...,n), \tag{S}
\end{align}
for all $n,B_i,C_i$ as above if and only if it is a mixture of exponential distributions $[$\citet{diaconis1985quantifying}$]$, in symbols
\begin{align*}
\bP(A_i\in B_i;i=1,...,n)&=\int\limits_{\mathcal{E}} \prod\limits_{i=1}^n P(A_i\in B_i) \nu(dP)\\
&=\int\limits_{\R_+} \prod\limits_{i=1}^n \int_{B_i} \lambda e^{-\lambda x_i} dx_i \tilde{\nu}(d\lambda),
\end{align*}
where $\mathcal{E}$ denotes the space of all exponential distributions on $\mathbb{R}_+$ and $\tilde{\nu}$ denotes the push-forward measure of $\nu$ along the natural parametrization $\tilde{} : \mathcal{E}\rightarrow \R_+;
 P\mapsto \lambda$
of the exponential distributions.
Moreover, if in above situation additionally it holds $\E[A_2|A_1]=\alpha A_1+\beta$ for real constants $\alpha,\beta>0$, then the mixing measure $\tilde{\nu}$ can be shown to be a Gamma distribution.\\

Since the $M/G/1$ queueing model implies a mixture of exponential distributions for the joint law of the inter-arrivals, from a Bayesian point of view, we assume $A_1^{\infty}$ to meet constraints (E), (S) and that $\E[A_2|A_1]=\alpha A_1+\beta$ holds as well (note that by exchangeability, this extends to all random inter-arrival times). The latter directly leads to a conjugate prior for the arrival rate. To be more precise, if we assume $\lambda$ to be a random variable distributed according to a Gamma distribution with hyper parameters $a,b>0$, then the posterior distribution given the data $X_1^n$  is a Gamma distribution as well with updated hyper-parameters $(a+n,b+\sum_{i=1}^{n}A_i)$, i.e.~the family of gamma distributions is closed with respect to exponential sampling.
In summary, we assume the following sampling scheme
\begin{align*}
\lambda | (a,b) &\sim \Gamma(a,b)\\
A_1^{\infty}|\lambda &\sim \bigotimes\limits_{\N}\mathcal{E}(\lambda),
\end{align*}
which leads, through Bayes theorem, to the posterior distribution, the Bayes estimate for squared error loss and the posterior predictive distribution density, respectively, given by
\begin{align*}
\lambda | (a,b),A_1^n &\sim \Gamma\left(a+n,b+\sum_{i=1}^n A_i\right)\\
\E_{\Gamma}[\lambda | A_1^n,(a,b)]&= \frac{a+n}{b+\sum_{i=1}^n A_i}\\
f\left(a_{n+1}|A_1^n,(a,b)\right)&=\frac{(a+n)\left(b+\sum_{i=1}^n A_i\right)^{a+n}}{\left(b+\sum_{i=1}^n A_i+a_{n+1}\right)^{a+n+1}}.
\end{align*}
The latter leads to the predictive value for the next observation
\begin{align*}
\E[A_{n+1}| A_1^n, (a,b)]&=\frac{1}{a+n-1}\sum_{i=1}^n A_i +\frac{b}{a+n-1}.
\end{align*}
Note that the latter equation again reflects the learning process which does not exist in the frequentistic approach in such an explicit form and that for $n=1$ it is given by $\E[A_2|A_1,(a,b)]=1/a A_1 + b/a$.

\subsection{Service Time Distribution}
Since the $M/G/1$ model does not imply a parametric mixture for the service time random variables as for the inter-arrivals, assigning a suitable prior is a more difficult task. Not being able to shrink the support of the mixing measure $\nu$ in the de Finetti theorem to a finite-dimensional set, we have to choose a prior that supports most of $\cP(\R_+)$. The common way is to parametrize $\cP(\R_+)$ by a reasonable dense subset. This is taken to be the set of all discrete distributions on $\R_+$, $\cP_d(\R_+)=\{P\in \cP(\R_+): P(\cdot)=\sum_{i=1}^{\infty} w_i \delta_{x_i}(\cdot); w_i \in[0,1], \sum_{i=1}^{\infty}w_i=1, x_i\in \R_+; \forall i\in\N\} $.
 It is well-known that the most famous non-parametric prior in Bayesian statistics, namely the Dirichlet process prior obtained in \citet{ferguson1973bayesian}, samples discrete probability measures with probability one. Moreover, it is known that it has full weak support of all of $\cP(\R_+)$ if its base measure has support all of $\R_+$. However, here we will use a slightly more general family of prior distributions that enables us to model more flexibly prior beliefs of the true data generating distribution $G_0$.
This larger family will be a subclass of so called neutral to the right prior processes, namely the beta-Stacy processes. Although these priors sample discrete probabilities with probability one, too, an analogue of the result concerning the weak support is known as well. The class of neutral to the right priors is now briefly introduced and its most important properties will be stated.

\subsubsection{Neutral to the right Priors}
Let $\mathcal{F}(\R_+)$ denote the space of all cumulative distribution functions (c.d.f.) on $\R_+$. Then, a random distribution function $F\in \Omega^{\mathcal{F}(\R_+)}$ is said to be neutral to the right (NTR) if for each $k>1$ and 0<$t_1<t_2<...<t_k$ the normalized increments
\begin{align*}
F(t_1),\frac{F(t_2)-F(t_1)}{1-F(t_1)},...,\frac{F(t_k)-F(t_{k-1})}{1-F(t_{k-1})}
\end{align*}
are independent assuming $F(t)<1, \forall t\in\R_+$. That is, for all $i\in\N$, $\bar{F}(t_i)/\bar{F}(t_{i-1})$ is independent of the sigma-field generated by $F$ up to time $t_{i-1}$, $\sigma(\{F(t):t<t_{i-1})$, where $\bar{F}(\cdot):=1-F(\cdot)$ is the survival function associated to $F$. This essentially asserts that the proportion of mass that $F$ assigns to $(t_i,\infty)$ with respect to $(t_{i-1},\infty)$  does not depend on how $F$ behaves left of $t_{i-1}$.
 This property coined the name neutral to the right. \citet{doksum1974tailfree} has shown that $F(\cdot)\in \Omega^{\mathcal{F}}$ is NTR if and only if $\mathcal{L}(F(\cdot))=\mathcal{L}(1-\exp[-A(\cdot)])$ for some independent increment process $A(\cdot)$ which is almost surely non-decreasing, right continuous and such that $\lim_{t\rightarrow -\infty}  A(t)=0$ and $\lim_{t\rightarrow \infty} A(t)=\infty$. Such objects  are called increasing additive processes, see e.g. \citet{sato1999levy}. For more details on the construction of NTR priors see e.g. \citet{phadia2015prior}.
  Since independent increment processes are well understood, the definition of NTR priors leads to a rich class of non-parametric priors which are analytically tractable.
Another nice feature of NTR priors is that this family is conjugate with respect to (right-censored) exchangeable data. A fact that makes NTR priors appreciable in statistical survival analysis. Notice, that a Dirichlet process prior updated by right censored data is not longer Dirichlet.\\

The next proposition makes a statement about the weak support of a NTR prior. Recall that the topological support of a measure is the smallest closed set with full measure. In the following theorem $\mathcal{F}(\R_+)$ is identified with the space of all probability measures $\cP(\R_+)$ which is equipped with the weak topology.

\begin{proposition}\citet{dey2003some}\\
Let $F$ be a random distribution function which is governed by a NTR prior $\Pi\in\cP(\mathcal{F})$, i.e. $F\sim \Pi$, and let $A(\cdot)=-\log(1-F(\cdot))$ be the corresponding positive increasing additive process with Lévy measure $L$. Then, $\Pi$ has full support if $L$ has full support. The assertion remains true if $\mathcal{F}(\R_+)$ is equipped with the sup-norm.
\end{proposition}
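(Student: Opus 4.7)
The plan is to exploit Doksum's representation $F(\cdot)=1-\exp(-A(\cdot))$ and transfer the question to the support of the law of the increasing additive process $A$ on the space of non-decreasing càdlàg paths. Since $a\mapsto 1-e^{-a}$ is globally $1$-Lipschitz on $\R_+$, sup-norm neighborhoods of a target c.d.f.~$F_0$ correspond to sup-norm neighborhoods of $A_0:=-\log(1-F_0)$; hence it suffices to prove $\bP(\|A-A_0\|_\infty<\varepsilon)>0$ for every admissible target $A_0$ and every $\varepsilon>0$, and since sup-norm convergence of c.d.f.s implies weak convergence, both assertions of the proposition will follow simultaneously. As a first reduction, approximate $F_0$ uniformly by a step c.d.f.~$\hat F$ with finitely many jumps of sizes $p_1,\dots,p_k$ at points $0<t_1<\cdots<t_k$, chosen so that additionally $\hat F(t_k)>1-\varepsilon$. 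The corresponding $\hat A:=-\log(1-\hat F)$ is a step function with positive jumps $a_i=\log(1-\hat F(t_{i-1}))-\log(1-\hat F(t_i))$ at the $t_i$, and it is enough to approximate this finite step target.

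Next I would invoke the Lévy--Itô decomposition $A(t)=\int_{[0,t]\times(0,\infty)}x\,N(ds,dx)$ (absorbing any drift into the small-jump analysis), where $N$ is a Poisson random measure with intensity $L$. Choose disjoint rectangles $R_i=(t_i-\delta,t_i+\delta)\times(a_i-\delta,a_i+\delta)$; by the full-support hypothesis each satisfies $L(R_i)>0$. Pick a cutoff $\eta\in(0,\min_i(a_i-\delta))$ and a horizon $T>t_k$, and set $B=([0,T]\times[\eta,\infty))\setminus\bigcup_i R_i$, a set of finite $L$-mass. Poisson independence on disjoint sets then gives
\begin{align*}
\bP\bigl(N(R_1)=1,\dots,N(R_k)=1,\ N(B)=0\bigr)=e^{-L(B)}\prod_{i=1}^k L(R_i)\,e^{-L(R_i)}>0.
\end{align*}
On this event, the large-jump part of $A$ on $[0,T]$ equals $\sum_{i=1}^k \xi_i\,\mathbf{1}_{[s_i,\infty)}(\cdot)$ with $(s_i,\xi_i)\in R_i$ and hence lies within $O(k\delta)$ of $\hat A$ in sup-norm, while the small-jump contribution has first moment $\int_{[0,T]\times(0,\eta]}x\,L(ds,dx)\to 0$ as $\eta\to 0$ by the defining integrability of a Lévy measure; a Markov bound makes the small-jump supremum up to $T$ smaller than $\varepsilon$ with further positive probability (monotonicity lets one replace the supremum by the endpoint value). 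The tail beyond $T$ is controlled by noting that $A(T)\geq\sum_i(a_i-\delta)$ on the event above, so $F(t)$ and $\hat F(t)$ both lie in $(1-2\varepsilon,1]$ for $t>T$.

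The main obstacle is the careful bookkeeping that combines three independence-based positive-probability contributions (large jumps correctly localized in the $R_i$, no extraneous jumps in $B$, small-jump remainder controlled uniformly on $[0,T]$) together with the tail estimate beyond $T$ into a single event on which $\|A-\hat A\|_\infty$ is small. The delicate point is the infinite total $L$-mass near $x=0$, which prevents a single "avoid all small jumps" event from being positive; one must instead accept the small-jump process and estimate it in $L^1$. Once this is done, translating back through the $1$-Lipschitz map $a\mapsto 1-e^{-a}$ yields $\|F-\hat F\|_\infty<C\varepsilon$, and the triangle inequality with the initial step-function approximation gives $\|F-F_0\|_\infty<(C+1)\varepsilon$, establishing full support in the sup-norm topology and hence in the weak topology.
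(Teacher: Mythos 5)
Your overall strategy (pass to the additive process via Doksum's representation, use the Poisson random measure with intensity $L$, force exactly one point into each of finitely many time$\times$size boxes, and get positive probability from independence as $\prod_i L(R_i)e^{-L(R_i)}$ times the probability of controlling the rest) is the same device the paper uses; your explicit treatment of extraneous large jumps, of the small-jump remainder via an $L^1$/Markov bound, and of the tail beyond $T$ is in fact more careful than the paper's terse claim $B\subseteq C$. However, there is a genuine gap at the discretization step. On your event the large-jump part is $\sum_i \xi_i\,\mathbf{1}_{[s_i,\infty)}$ with $s_i\in(t_i-\delta,t_i+\delta)$, and whenever $s_i\neq t_i$ the sup-distance to $\hat A$ on the interval between $s_i$ and $t_i$ equals (up to small corrections) the full jump height $a_i$, not $O(\delta)$; shrinking $\delta$ shortens the mismatch interval but does not shrink the sup-norm error. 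So the claimed $O(k\delta)$ bound on $\|A-\hat A\|_\infty$ is false, and the same problem persists after mapping back to $F$: the error on the mismatch interval is of the order of the jump of $\hat F$ at $t_i$. This is fatal for your stated conclusion, which asserts positive mass of sup-norm balls around an \emph{arbitrary} $F_0$: if $F_0$ has a jump of size $1/2$ at $t_1$ and the time-marginal of $L$ has no atom there, then $\{\|F-F_0\|_\infty<1/10\}$ forces $A$ to jump exactly at $t_1$, an event of probability zero, so full sup-norm support around discontinuous targets simply does not hold for such priors.

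The repair is exactly the reduction the paper makes and which you skipped: since continuous c.d.f.'s are weakly dense, it suffices (for full weak support) to treat continuous $F_0$, and then to choose the discretization so that all increments of the target are smaller than the tolerance (in the paper, the partition is chosen with $\sup_j \phi F_0(a_j,a_{j+1}]<\gamma$). With small target increments, the jump-location mismatch costs only one increment per box, and the error should be assessed at the level of $F$ (where each such cost is at most the small $F$-increment), not via $\|A-\hat A\|_\infty$ through the Lipschitz map, because increments of $A$ corresponding to $F$-increments near the top of the distribution need not be small. With that modification your bookkeeping of the Poisson event, the small-jump remainder and the tail goes through and yields the weak-support statement; the sup-norm statement must likewise be read for continuous targets rather than for all of $\mathcal{F}(\R_+)$.
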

\begin{proof}
It has to be shown that all weak neighborhoods of any probability measure have positive $\Pi$-mass. Since continuous distributions are dense in $\cP(\R_+)$ with respect to the weak topology, it suffices to show the assertion for all weak neighborhoods of continuous distributions. Now, choose some random distribution function $F_0$ and some $\epsilon > 0$ and consider $U_{\epsilon}:=\left\{F:\sup\limits_{0\leq t <\infty} \left|F(t)-F_0(t)\right|<\epsilon\right\}$. Then $[$e.g. \citet{ghosh2003bayesnonp})$]$ $W\subseteq U_{\epsilon}$ for some weak neighborhood $W$ of $F_0$. Moreover, there is $\delta > 0$, $m\in\N$ and $0 < t_1 < t_2 < \cdots < t_m$ such that $\left\{F:\left|F(t_i,t_{i+1}]-F_0(t_i,t_{i+1}]\right|<\delta, i=1,\dotsc,m \right\}\subseteq W$. Hence, it is enough to show that sup-neighborhoods restricted to compact sets possess positive prior probability. By the homeomorphism $\phi:F(\cdot)\mapsto -\log[1-F(\cdot)]$ the problem can be translated to the analogous one for the corresponding Lévy process. That is to show that $\mathcal{L}(A)$ gives positive probability to sets of the form $C:=\left\{A:\sup\limits_{0\leq t \leq r}\left|A(t)-\phi^{-1}F_0(t)\right|<\gamma \right\}$ for fixed $r\in \mathbb{Q}$. Now, take a partition $\rho=\cup_{i=1}^k (a_i,a_{i+1}]$ of $(0,r]$  such that $\sup\limits_{1\leq j\leq k}\phi^{-1}F_0(a_j,a_{j+1}]<\gamma$ and define
\begin{align*}
&B_i:=(a_i,a_{i+1}]\times (\phi^{-1}F_0(a_i,a_{i+1}]-\gamma/k,\phi^{-1}F_0(a_i,a_{i+1}]+\gamma/k), \\
&B:=\cap_{i=1}^k\left\{A: \#\{(t,A(\{ t \}) \in B_i\}=1 \right\}.
\end{align*}
It follows that $B\subseteq C$ and, since $L$ was assumed to have full support,
\begin{align*}
\phi^{-1}\Pi(C)\geq \phi^{-1}\Pi(B)=\prod\limits_{i=1}^k L(B_i)e^{-L(B_i)}>0.
\end{align*}

\end{proof}

\subsubsection{Beta Processes and Beta-Stacy Processes}

For our purposes, we choose a NTR prior with corresponding process $Y(\cdot)$ being driven by a certain class of Lévy measures.
These were studied by \citet{hjort1990nonparametric} and \citet{walker1997beta}. Hjort studied beta-processes from a survival analysis viewpoint and therefor elicited a non-parametric prior for the cumulative hazard function (c.h.f.) given for $F\in \mathcal{F}(\R_+)$ by
\begin{align*}
H(t)=\int_0^t \frac{dF(s)}{\bar{F}(s)}, \: t >0.
\end{align*}
\citet{walker1997beta} give the definition of the analog of the beta-process as a prior for the c.d.f. directly as follows.
$F$ is said to be distributed according to a beta-Stacy process with parameters $(c(\cdot),H(\cdot))\in\R_+^{\R_+}\times \mathcal{F}(\R_+)$ (for short $F\sim BS(c,H)$) if for all $t\geq 0$ the corresponding process $Y(\cdot)$ fulfilling $F(\cdot)=1-\exp[-Y(\cdot)]$ has Lévy measure
\begin{align*}
dL_t(x)=\frac{dx}{1-e^{-x}}\int\limits_0^t e^{-xc(s)(1-H(s))} c(s) dH_c(s),
\end{align*}
for all $x>0$, where $H_c(t)=H(t)-\sum\limits_{k:t_k<t}H({t_k})$ is the continuous part of $H$ with  $t_i$ as the fixed points of discontinuity of $H$. Since we are in a continuous-time framework, we will always choose $H$ to be continuous. However, discontinuities appear in the Lévy measure governing the posterior law. Note that $\E_{BS}[F(\cdot)]=H(\cdot)$ is the prior guess on the c.d.f.~$F$ and the function $c(\cdot)$ acts like a tuning parameter affecting the magnitude of the increments and is sometimes interpreted as the "flexible" belief in the prior guess. For the sake of clarity, note that the Dirichlet process with finite measure $\alpha$ as parameter admits a similar representation with Lévy-measure
\begin{align*}
dD_t(x)=\frac{dx}{1-e^{-x}}\int\limits_0^t e^{-xc (1-\bar{\alpha}(s))} c \bar{\alpha}(ds),
\end{align*}
where $c=\alpha(\R)$ and $\bar{\alpha}(\cdot)$ denotes the c.d.f. corresponding to the probability measure $\frac{\alpha(\cdot)}{c}$. In the case of Dirichlet processes it is well-known that the prior guess on the random probability measure equals $1/c\alpha(\cdot)$ and $c$ itself is often interpreted as the strength of belief in the prior guess. Thus, a Dirichlet process is a beta-Stacy process whose parameters are determined by $\alpha$ alone.

As already mentioned, the beta-Stacy process is a parametrically conjugate prior, meaning that the posterior law of $F$ given exchangeable possibly right-censored data is a beta-Stacy process as well.
 To be more precise, let $S_1^n$ be the first $n$ projections of infinite exchangeable data $S_1^{\infty}$. Let $S_1^{\infty}$ be conditional iid given the c.d.f. $F$ and let $F\sim BS(c,H)$.

Further, define $M(t)=\sum\limits_{i=1}^n 1_{[t,\infty)}(S_i)$ and $N(t)=\sum\limits_{i=1}^n 1_{[0,t]}(S_i)$. Then it holds $[$\citet[Theorem 4]{walker1997beta}$]$ that $F|S_1^n \sim BS(c_n^*,H_n^*)$, where $c_n^*(\cdot)$ and $H_n^*(\cdot)$ are given by
\begin{align*}
H_n^*(t)&=1-\prod\limits_{s \in [0,t]}\left(1-\frac{c(s)dH(s)+dN(s)}{c(s)\bar{H}(s)+M(s)}\right),\\
c_n^*(t)&=\frac{c(t)\bar{H}(t)+M(t)-N(t)}{\bar{H^*}(t)}.
\end{align*}
Thereby, $H_n^*(t)$ is defined by means of the product integral, see \citet{gill1990survey}.
Note, that the posterior process possesses fixed points of discontinuity at the observations and that the posterior guess on $F$, i.e.~the Bayes estimate with respect to squared error loss is given by $\hat{F}_n(\cdot)=H_n^*(\cdot)$.\\

Having a look at the Pollazcek-Khinchine transform formulas in section 2, one may ask if they are well defined for almost all $\lambda$ and $F$ drawn from their respective prior and posterior distributions.
While the posterior of the mean inter-arrival times is straightforward by definition, the mean and the second moment of the random c.d.f. describing the general service times deserves more attention.  The existence of functionals of c.d.f.s drawn according to a beta-Stacy process was studied in \citet{epifani2003exponential}.  Relating $A(\cdot)$ and $c(\cdot)$ to the existence of a certain functional, they obtained sufficient conditions for moments of order $m$ to exist $[$equation (10) in their article$]$.  We will assume throughout that this condition holds at least for the moment of second order. Moreover, note that they obtained an explicit formula for the prior moments $[$equation (11)$]$ as well as for the posterior moments $[$equation (13)$]$.

In summary, we assume the following sample scheme for the service times in the $M/G/1$ system.
\begin{align*}
G|(c,H) &\sim BS(c,H)\\
S_1^{\infty}|G &\sim \bigotimes_{\N}G.
\end{align*}
This leads to the posterior distribution
\begin{align*}
G|(c,H),S_1^n \sim BS(c_n^*,H_n^*).
\end{align*}

Notice again that this updating continues to hold for right-censored observations. Hence, the model can be enlarged in that one does not have to keep exact track of the customers and is still able to make reasonable inference even if e.g. it is solely known that the service of customers has exceeded a certain threshold.

\subsection{Estimators for Queueing Characteristics}

In this subsection, we study the Bayes estimators, i.e. the posterior means, of several characteristics of the $M/G/1$ system. Not all posterior laws of each single characteristic are obtainable explicitly. Hence, for those characteristics whose posterior laws are not obtainable in closed form, we define suitable estimators by replacing Bayes estimators for the corresponding values. \\

Assume for the general service time distribution $G\sim BS(c,H)$ and let
\begin{align*}
&\hat{G}_n=\E_{BS}\left[G|S_1^n\right],\\
&\hat{\mu}_n=\E_{BS}\left[\int_0^{\infty} t dG(t)|S_1^n\right] \text{ and} \\
&\hat{\lambda}_n=\E_{\Gamma}\left[\lambda| A_1^n\right]
\end{align*}
 be the Bayes estimates with respect to squared error loss. Note that with $M_n(s) = \sum_{j=1}^n \delta_{X_j} [s, \infty)$, as in \citet{epifani2003exponential},  $\hat{\mu}_n$ can be given    as
\begin{align*}
\hat{\mu}_n=\int_0^{\infty} \exp\left[-\int_0^t \frac{\alpha(ds)}{\beta(s)+M_n(s)}\right] \exp\left[-\int_0^t \frac{\beta(s)+M_n(s)-1}{\beta(s)+M_n(s)}N_n(ds)\right] dt,
\end{align*}
where if $H$ is continuous $[$see \citet{phadia2015prior}$]$
\begin{itemize}
\item $\beta(t)=c(t)[1-H(t)]$
\item $\alpha(t)=\int_0^t c(s)dH(s)$.
\end{itemize}
Observe that $\hat{\rho}_n=\E_{BS\otimes\Gamma}\left[\lambda \mu| S_1^n,A_1^n \right]=\hat{\mu}_n \hat{\lambda}_n$  by above independence assumption. However, in Bayesian statistics, one is interested in the entire posterior law rather than merely in a certain functional. The posterior law of $\rho$ is obtainable explicitly in the following form
\begin{align*}
&P_{BS\otimes \Gamma}(\rho\leq t|A_1^n,S_1^n)=P_{BS\otimes \Gamma}(\mu \lambda \leq t|A_1^n,S_1^n)\\
&=\int_0^{\infty} P_{BS\otimes \Gamma}(\mu\leq t/\lambda|A_1^n,S_1^n,\lambda) P_{BS\otimes \Gamma}(\lambda|A_1^n,S_1^n) d\lambda\\
&=\int_0^{\infty} P_{BS}(\mu\leq t/\lambda|S_1^n,\lambda) P_{ \Gamma}(\lambda|A_1^n) d\lambda\\
&=\frac{(b+\sum_{i=1}^n A_i)^{a+n}}{\Gamma(a+n)}\int_0^{\infty} P_{BS}(\mu\leq t/\lambda|S_1^n,\lambda) \lambda^{a+n-1}e^{-\lambda(b+\sum_{i=1}^n A_i)} d\lambda.
\end{align*}
Note that $ P_{BS}(\mu\leq t/\lambda|S_1^n,\lambda)$ can be stated more explicitly in form of a density by means of Proposition 4 in \citet{regazzini2003distributional}. This explicit form is omitted here due to technical reasons and left to the interested reader. If one observes the queueing system and is, as is usually the case, not aware of the condition $\rho_0<1$, one can thus obtain the posterior probability that the system is stable $P_{BS\otimes \Gamma}(\rho<1 |A_1^n,S_1^n)$. \\

We close this section by defining estimates for:
\begin{itemize}
\item mean service time:  $\hat{\mu}_n$
\item traffic intensity:  $\hat{\rho}_n=\hat{\lambda}_n \hat{\mu}_n$
\item LST of service-time distribution: $g_n^*(z)=\int_0^{\infty} e^{-zt} d\hat{G}_n(t)$
\item LST of waiting-time distribution: $w_n^*(z)=\frac{z(1-\hat{\rho}_n)}{z-\hat{\lambda}_n(1-g_n^*(z))}$
\item pgf of queue-size distribution: $q_n^*(z)=\frac{(1-\hat{\rho}_n)(1-z)}{g_n^*(\hat{\lambda}_n(1-z))-z}$
\item system-size distribution: $n_n^*(z)=q_n^*(z) g_n^*(\hat{\lambda}_n(1-z))$.
\end{itemize}

Notice that for the random arrival- and service rate and the random distribution function of the service times the natural Bayes estimator is used, i.e. the minimizer with respect to squared error loss. They are analytically tractable and obtainable in closed form. For the remaining queueing characteristics obvious plug-in estimators are used. The reason therefor is that a closed form of the push-forward law under the mapping
\begin{align*}
(\lambda,G(\cdot))\mapsto f(\cdot),
\end{align*}
where $f\in\{n,g,q\}$, of the prior $\Pi_{BS\otimes \Gamma}$ is not easy to obtain. Neither is known how to update such a pushed prior by exchangeable data $(S,A)_1^n$ which we have access to in a natural Bayesian way. \\

However, the goodness of these estimates in a rigorous mathematical sense is established in the next sections.

\section{Consistency Results}
Posterior consistency provides a tool for validation of Bayesian procedures. Roughly speaking, it is defined to be the property of the posterior law to center around the true parameter when the number of observed data increases. As a consequence of posterior consistency two different priors will asymptotically lead to the same prediction, a fact often called merging of prior opinions. So, prior information is consecutively washed away by the new state of information provided by the data. Posterior consistency is the most desired property of Bayesian procedures since it states that one can recover the true measure that generates the data. For examples of inconsistent Bayes procedures see e.g. \citet{diaconis1986consistency} and \citet{kim2001posterior} and references therein. \\

We will have to deal with two different posterior consistency issues, a parametric one for the posterior of the arrival rate and a non-parametric one for the service time distribution and values depending on it. The first was considered by \citet{doob1949application} in a rather general way by the use of martingale theory. It clarifies that, under very weak constraints (the state- and the parameter spaces are assumed to be Polish spaces and the likelihood is identifiable), there is a subset of the parameter space with full prior mass such that the sequence of posterior laws is consistent at any parameter of that subset that is taken as the true one. However, one problem is that in high-dimensional parameter spaces the set with full prior mass might become topologically small. Therefore, especially nonparametric Bayesian statistical problems deserve a deeper study of posterior consistency since the set of possible likelihoods can not longer be assumed to be parametrized finitely.\\

First of all, the definition of posterior consistency in a rather general framework is given as it can be found e.g.~in \citet{schervish1995theory}. Notice that for any Polish state space $\mathcal{S}$ the space of all probability measures, $\cP(\mathcal{S})$, can be equipped with the topology induced by weak convergence which has neighborhood bases for $P\in\cP(\mathcal{S})$ given by the collection of sets of the form $U_{P;\epsilon}=\left\{Q\in\cP(\R): \left|\int f_i dP -\int f_i dQ\right| <\epsilon, f_i\in C_b(\R), \text{ for all } i=1,..,k \right\}$. This topology makes $\cP(\mathcal{S})$ itself a Polish space $[$see \citet{kechris1995descriptive}$]$ and the topology can be metrized e.g. by the Prohorov metric. The Borel $\sigma$-field, $\mathfrak{B}(\cP)$, with respect to the weak topology serves as a natural measure-theoretical structure to turn $\cP(\mathcal{S})$ into a measurable space $(\cP(\mathcal{S}),\mathfrak{B}(\cP))$. $\mathfrak{B}(\cP)$ turns out to be the smallest $\sigma$-field on $\cP(\mathcal{S})$ that makes the mappings $P\mapsto P(A)$ $(\mathfrak{B}(\cP),\mathfrak{B}([0,1]))$-measurable for all $A\in\mathfrak{B}(\mathcal{S})$. In the sequel $\Pi$, occasionally with an appropriate index, shall denote a prior distribution and $\Pi_n$ the corresponding posterior distribution after having seen the first $n$ projections of the exchangeable data. A distribution (or a value that parametrizes a distribution) indexed by $0$ will always denote the true data generating measure.

 \begin{definition}
Let $\Pi\in\cP(\cP(\R))$ be a prior distribution, i.e. the distribution of some random probability measure $P\in\cP(\R)$ and let $X_1^{\infty}$ be a sequence of exchangeable data which is conditionally i.i.d. given $P$. Moreover, let $P_0\in\cP(\R)$ be the true data generating distribution and $\left(\Pi_n\right)_{n\geq 0}=\left(\Pi(\cdot|X_1^n)\right)_{n\geq 0}$ be the sequence of posterior laws. Then call $(\Pi_n)_{n \geq 0}$ weakly consistent at $P_0$ if for all weak neighborhoods $U_{P_0,\epsilon}$ of $P_0$ it holds
\begin{align*}
\Pi_n(U_{P_0,\epsilon})\stackrel{n \rightarrow \infty}{\longrightarrow} 1,
\end{align*}
for $P_0^{\infty}$-almost all data sequences $X_1^{\infty}$.
\end{definition}
 Here, $P_0^{\infty}$ denotes the true joint law governing the sequence $X_1^{\infty}$.\\

Needless to say that in the case of the arrival rate $\lambda$ things become easier because, due to the additional judgment concerning the symmetry of the distribution of the exchangeable sequence of inter-arrival times, one can reduce the problem to that of parametric consistency that is rather easy to handle by conjugacy.

\begin{proposition}\thlabel{prop1}
Let $\Pi_{\Gamma}$ stand for the Gamma prior of the random arrival rate $\lambda$ and $\left(\Pi_{\Gamma;n}(\cdot)\right)_{n\geq 1}=\left(\Pi_{\Gamma}(\cdot|A_1^n)\right)_{n\geq 1}$ be the sequence of posterior laws. Then, for all $\epsilon>0$ and for $P_{\lambda_0}^{\infty}$ almost all sequences $A_1^{\infty}$ it holds
\begin{align*}
\Pi_{\Gamma;n}\left(|\lambda-\lambda_0|<\epsilon\right) \stackrel{n\rightarrow\infty}{\longrightarrow}1.
\end{align*}
\end{proposition}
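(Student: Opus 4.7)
My plan is to exploit the conjugacy that was established in the preceding subsection, so that the posterior law is known in closed form as a Gamma distribution, and then combine a moment calculation with the classical strong law of large numbers for i.i.d.~exponential variables.

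More precisely, by the sampling scheme I would recall that, conditionally on the data $A_1^n$, the posterior is $\Pi_{\Gamma;n} = \Gamma\!\bigl(a+n,\; b+\sum_{i=1}^n A_i\bigr)$. Hence the posterior mean and posterior variance of $\lambda$ are
\begin{align*}
m_n := \frac{a+n}{b+\sum_{i=1}^n A_i}, \qquad v_n := \frac{a+n}{\bigl(b+\sum_{i=1}^n A_i\bigr)^2}.
\end{align*}
Under $P_{\lambda_0}^{\infty}$ the inter-arrival times $A_1,A_2,\dots$ are i.i.d.~$\mathcal{E}(\lambda_0)$, so the strong law of large numbers yields $\tfrac{1}{n}\sum_{i=1}^n A_i \to 1/\lambda_0$ almost surely. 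Consequently, almost surely,
\begin{align*}
m_n \longrightarrow \lambda_0 \qquad \text{and} \qquad v_n \longrightarrow 0,
\end{align*}
since $v_n$ behaves like $(a+n)/(n/\lambda_0)^2 = O(1/n)$.

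With these two facts in hand, the proposition follows by a Chebyshev-type argument applied under the posterior law. Fix $\epsilon>0$ and a sample sequence $A_1^\infty$ for which $m_n\to\lambda_0$ and $v_n\to 0$. Choose $N=N(\omega,\epsilon)$ so large that $|m_n-\lambda_0|<\epsilon/2$ for all $n\ge N$. Then for such $n$, $\{|\lambda-\lambda_0|\ge\epsilon\}\subseteq\{|\lambda-m_n|\ge\epsilon/2\}$, so that
\begin{align*}
\Pi_{\Gamma;n}\bigl(|\lambda-\lambda_0|\ge\epsilon\bigr) \;\le\; \Pi_{\Gamma;n}\bigl(|\lambda-m_n|\ge\epsilon/2\bigr) \;\le\; \frac{4 v_n}{\epsilon^2} \;\longrightarrow\; 0.
\end{align*}
Since the set of exceptional $A_1^\infty$ on which either $m_n\not\to\lambda_0$ or $v_n\not\to 0$ has $P_{\lambda_0}^\infty$-measure zero, this gives the claim.

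The main step is really just making sure the moment computations and the SLLN align; there is no genuine analytical obstacle here because conjugacy hands us an explicit posterior. The only thing to be careful about is that we work with $P_{\lambda_0}^{\infty}$-almost sure statements about the data and then translate them into posterior probability statements, rather than trying to invoke a general nonparametric consistency theorem (such as Doob's) for which the conjugate computation would be overkill. One could alternatively argue via Doob's theorem directly, noting that the exponential family is identifiable and the Gamma prior charges every open subset of $\mathbb{R}_+$, but the direct route above is both shorter and yields the stronger conclusion that consistency holds at every $\lambda_0\in\mathbb{R}_+$, not merely at a prior-full set.
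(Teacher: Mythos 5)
Your proposal is correct and follows essentially the same route as the paper's own proof: exploit conjugacy to get the explicit $\Gamma\bigl(a+n,\,b+\sum_{i=1}^n A_i\bigr)$ posterior, use the strong law of large numbers to show the posterior mean tends to $\lambda_0$ and the posterior variance to zero $P_{\lambda_0}^{\infty}$-almost surely, and conclude via a Markov/Chebyshev bound. You merely spell out the $\epsilon/2$ transfer from centering at the posterior mean to centering at $\lambda_0$, which the paper leaves implicit.
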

\begin{proof}
Taking into account that the gamma prior is conjugate for exponentially distributed data, the posterior density can be shown to be that of a $\Gamma(a+n,b+\sum_{i=1}^n A_i)$ distribution. By well known properties of the Gamma distribution one has $\E_{\Gamma;n}[\lambda]=(a+n)(b+\sum_{i=1}^n A_i)^{-1}$ and $\textsf{Var}_{\Gamma;n}[\lambda]=(a+n)\left(b+\sum_{i=1}^n A_i\right)^{-2}$.
Applying the continuous mapping theorem the assertion follows from the strong law of large numbers and the Markov inequality.
\end{proof}

Let us now turn to the nonparametric part concerning the random c.d.f. $G$. The following two results are due to \citet{dey2003some} and give the consistency of the Bayes estimator and the random c.d.f. $G$.

\begin{lemma}[\citet{dey2003some}]\thlabel{deylemma}
Let $G_0\in \mathcal{F}(\R_+)$ be the true c.d.f. of the service times. Furthermore suppose that $G_0$ is continuous and that $G_0(t)<1$ for all $t\in\R_+$. Let $G\sim BS(c,H)$ and $\Pi_{BS;n}$ denote the posterior distribution of $G$. Then, for $G_0^{\infty}$-almost all sequences of data $S_1^{\infty}$ and all $t\in\R_+$ it holds
\begin{align*}
\E_{BS;n}\left[G(t)\right]\stackrel{n\rightarrow \infty}{\longrightarrow} G_0(t).
\end{align*}
\end{lemma}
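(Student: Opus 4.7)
My plan is to exploit the explicit product-integral representation of the posterior Bayes estimate $\hat G_n(t)=H_n^*(t)$ and compare it to the empirical distribution function $F_n(t)=N(t)/n$, which in the absence of censoring admits the parallel Kaplan--Meier representation $1-F_n(t)=\prod_{s\in[0,t]}(1-dN(s)/M(s))$. The strategy is to show that the prior-derived terms in $H_n^*$ are asymptotically swamped by the data-derived terms and then invoke the Glivenko--Cantelli theorem on the latter.

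The first step would be a reduction via Glivenko--Cantelli. Fix $t$ with $G_0(t)<1$. Since $M(s)/n$ is the empirical survival function of the i.i.d.\ sample $S_1^n\sim G_0^{\otimes n}$, Glivenko--Cantelli gives $\sup_{s\in[0,t]}|M(s)/n-\bar G_0(s)|\to 0$ for $G_0^{\infty}$-almost all sequences, and in particular $M(s)\geq n\bar G_0(t)/2>0$ uniformly on $[0,t]$ for all $n$ past some random $n_0$. This uniform lower bound is the single most important input for the remaining comparisons.

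Next I would isolate the contributions of prior and data. Because $H$ is continuous, the cumulative posterior hazard decomposes as
\[
-\log(1-H_n^*(t))=\int_0^t\frac{c(s)\,dH(s)}{c(s)\bar H(s)+M(s)}-\sum_{s\leq t,\;\Delta N(s)>0}\log\!\left(1-\frac{\Delta N(s)}{c(s)\bar H(s)+M(s)}\right).
\]
The integral term is bounded by $(n\bar G_0(t)/2)^{-1}\int_0^t c(s)\,dH(s)$, which is $O(1/n)$ almost surely under the standard integrability hypothesis on $(c,H)$ that is implicit in a well-defined beta-Stacy prior. For the sum, the elementary inequality $|\log(1-x/(a+b))-\log(1-x/a)|\lesssim b/a^2$ for $0\leq x\leq a$, applied with $a=M(s)$ and $b=c(s)\bar H(s)$, bounds the discrepancy between the posterior sum and the Kaplan--Meier sum $-\sum_{s\leq t}\log(1-\Delta N(s)/M(s))$ by $\sum_{\Delta N(s)>0}c(s)\bar H(s)/M(s)^2$. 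Using Step~1, each summand is $O(1/n^2)$ and there are at most $n$ summands, so the total is $O(1/n)$.

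Combining these pieces yields $|\log(1-H_n^*(t))-\log(1-F_n(t))|\to 0$ almost surely, hence $H_n^*(t)-F_n(t)\to 0$ almost surely. Since $F_n(t)\to G_0(t)$ almost surely by Glivenko--Cantelli, the conclusion $\E_{BS;n}[G(t)]=H_n^*(t)\to G_0(t)$ follows. The main obstacle is the comparison of the two product integrals in Step~3: although each term is $O(1/n^2)$, the number of jumps grows linearly in $n$, so a crude union bound is not enough and one must genuinely use the lower bound $M(s)\gtrsim n$ secured in Step~1, which is why the assumption $G_0(t)<1$ is essential.
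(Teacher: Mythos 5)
The paper itself never proves this lemma: it is imported from \citet{dey2003some} and used as a black box, so your argument is necessarily a different route — a self-contained verification from the explicit conjugacy formula for $H_n^*$ quoted in Section 3.2. The route is sound. With $H$ continuous the product integral does split into the exponential of the prior-hazard integral $\int_0^t c(s)\,dH(s)/[c(s)\bar{H}(s)+M(s)]$ times jump factors at the observations, the uncensored Kaplan--Meier identity $1-F_n(t)=\prod_{s\le t}\bigl(1-\Delta N(s)/M(s)\bigr)$ is the right comparison object, and Glivenko--Cantelli gives the uniform lower bound $M(s)\ge n\bar{G}_0(t)/2$ on $[0,t]$, which is precisely where the hypothesis $G_0(t)<1$ enters. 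Two points need tightening. First, the elementary inequality $|\log(1-x/(a+b))-\log(1-x/a)|\lesssim b/a^2$ is false as stated near $x=a$; you should invoke it only in the regime you actually use, namely $x=\Delta N(s)=1$ almost surely (continuity of $G_0$ excludes ties) and $a=M(s)\ge n\bar{G}_0(t)/2$, where the mean value theorem gives the bound with an absolute constant. Second, your $O(1/n)$ estimates require $\int_0^t c(s)\,dH(s)<\infty$ and $\sup_{s\le t}c(s)\bar{H}(s)<\infty$; these are not among the hypotheses of the lemma as quoted, and should be stated explicitly — they are mild and consistent with what the paper assumes elsewhere (e.g. $\beta(s)=c(s)[1-H(s)]$ bounded away from $0$ and $\infty$ in Section 5, $\alpha(ds)=c(s)\,dH(s)$ a measure in \thref{lemma5}), but your proof genuinely uses them. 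What your approach buys is transparency: the prior contribution is exhibited as an explicit $O(1/n)$ perturbation of the empirical c.d.f., whereas the cited result of \citet{dey2003some} works for general neutral-to-the-right priors through conditions on the Lévy measure and also delivers the stronger uniform statement \thref{deytheorem}, which your pointwise computation does not address.
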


Next, the posterior consistency of the random service time distribution $G$ induced by a beta-Stacy prior will be stated. Since the service time distribution function $G(\cdot)$ is seen to be a random function rather than a random variable, we investigate deviations in the sup-norm.

\begin{theorem}[\citet{dey2003some}]\thlabel{deytheorem}
Let $G_0\in \mathcal{F}(\R_+)$ be the true c.d.f. of the service times. Again, suppose that $G_0$ is continuous and that $G_0(t)<1$ for all $t\in\R_+$. Let $G\sim BS(c,H)$ and $\Pi_{BS;n}$ denote the posterior distribution of $G$. Then, for all $\epsilon >0$ it holds
\begin{align*}
\Pi_{BS;n}\left(\sup\limits_{0\leq t <\infty} |G(t)-G_0(t)|<\epsilon \right)\stackrel{n\rightarrow\infty}{\longrightarrow}1
\end{align*}
for $G_0^{\infty}$-almost all sequences $S_1^{\infty}$.
\end{theorem}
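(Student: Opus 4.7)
The plan is to reduce the sup-norm statement to pointwise posterior concentration at finitely many points, by the classical P\'olya-type argument that exploits monotonicity of distribution functions together with continuity of the limit $G_0$. Given $\epsilon>0$, using continuity of $G_0$ and the condition $G_0(t)<1$ for all $t$, I would first pick a finite mesh $0=t_0<t_1<\cdots<t_m$ with $G_0(t_i)-G_0(t_{i-1})<\epsilon/3$ for $i=1,\dotsc,m$ and $1-G_0(t_m)<\epsilon/3$. A short monotonicity argument then shows that whenever $|G(t_i)-G_0(t_i)|<\epsilon/3$ holds simultaneously for $i=0,\dotsc,m$, one automatically has $\sup_{t\geq 0}|G(t)-G_0(t)|<\epsilon$. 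Hence, by the union bound, the assertion reduces to showing that for each fixed $t\in\R_+$,
\[
\Pi_{BS;n}\bigl(|G(t)-G_0(t)|\geq \delta\bigr)\longrightarrow 0 \qquad G_0^{\infty}\text{-a.s.}
\]

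For the pointwise step I would apply Chebyshev's inequality to the posterior law,
\[
\Pi_{BS;n}\bigl(|G(t)-G_0(t)|\geq \delta\bigr) \;\leq\; \frac{\textsf{Var}_{BS;n}[G(t)] + \bigl(\E_{BS;n}[G(t)]-G_0(t)\bigr)^{2}}{\delta^{2}}.
\]
The squared-bias term vanishes $G_0^{\infty}$-almost surely by \thref{deylemma}. For the posterior variance I would invoke the explicit second-moment formula for beta-Stacy processes (equation (13) in \citet{epifani2003exponential}), together with the updating rule recalled in Section 3.2. The posterior concentration function $c_n^*(t)=\bigl(c(t)\bar{H}(t)+M(t)-N(t)\bigr)/\bar{H}_n^*(t)$ grows with $n$ through $M(t)$ and $N(t)$, so plugging the updated parameters $(c_n^*,H_n^*)$ into the moment formula yields a variance bound that tends to zero almost surely.

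The main obstacle will be turning the heuristic ``$c_n^*\to\infty$ implies vanishing variance'' into a rigorous $G_0^{\infty}$-almost sure bound: one has to manipulate the product-integral representation of $H_n^*$ and the double-integral expression for the second moment in Epifani et al., and to argue that at a fixed $t$ with $G_0(t)<1$ the empirical quantities $M(t),N(t)$ grow linearly in $n$ so that the variance is of order $1/n$. Once this is secured at each of the finitely many mesh points $t_0,\dotsc,t_m$, the null set carrying the exceptional data sequences is a finite union of $G_0^{\infty}$-negligible sets and thus still negligible, and the P\'olya reduction from the first paragraph lifts pointwise concentration to sup-norm concentration with no further topological work.
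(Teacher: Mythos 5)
Your overall skeleton --- reduce the sup-norm event to simultaneous closeness at a finite mesh via monotonicity of $G$ and continuity of $G_0$, then establish pointwise posterior concentration by Chebyshev, with \thref{deylemma} handling the bias term --- is exactly the route the paper relies on: the paper defers the formal proof to \citet{dey2003some}, and its remark following the theorem sketches the very same reduction of sup-neighborhoods to finite intersections of pointwise bands. The finite-mesh construction and the null-set bookkeeping in your first and last paragraphs are unproblematic.

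The genuine gap is the posterior-variance step, which you yourself flag as the ``main obstacle'': that is precisely the nontrivial content of the theorem, and the tools you propose do not quite deliver it. The moment formulas of \citet{epifani2003exponential} that you cite concern the random mean $\int t\, dG(t)$, i.e.\ a linear functional of $G$, not the second moment of $G(t)$ at a fixed $t$ (one would at least have to specialize to $f=1_{[0,t]}$ and justify that the formulas apply); and ``$c_n^*(t)\to\infty$, hence the variance vanishes'' is a heuristic, not an argument --- note that the numerator $c(t)\bar{H}(t)+M(t)-N(t)$ involves the difference $M(t)-N(t)$, which need not grow with $n$ at all, so nothing can be read off from it directly. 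The mechanism the paper's remark alludes to, and which \citet{dey2003some} actually exploit, is the product structure $L(dt,dx)=a(t,x)\,dx\,K(dt)$ of the L\'evy measure: writing $\bar{G}(t)=e^{-A(t)}$ with $A$ an independent-increments process under the posterior, the moments $\E_{BS;n}[\bar{G}(t)^k]$ for $k=1,2$ are explicit exponential functionals of the posterior L\'evy measure together with fixed-jump terms, and the convergence $\E_{BS;n}[\bar{G}(t)]\to \bar{G}_0(t)$ from \thref{deylemma} then forces $\E_{BS;n}[\bar{G}(t)^2]\to \bar{G}_0(t)^2$, i.e.\ $\textsf{Var}_{BS;n}[G(t)]\to 0$. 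Without this (or an equivalent rigorous variance bound) the Chebyshev step is unsupported, so as written your proposal restates the theorem's difficulty at the key point rather than resolving it.
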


\begin{remark}
We stress some peculiarity of a certain class of neutral to the right measures. For a full formal treatment see \citet{dey2003some}. As already mentioned in the previous section, neighborhoods w.r.t.~the sup-norm contain some weak neighborhoods. These, in turn, are given as finite intersections of sets of the form $\left\{G: \left|G(t)-G_0(t)\right|<\gamma \right\}$. Thus, it is enough to show that the posterior mass of such sets converges to one. But since the Lévy measure that corresponds to the beta-Stacy prior is of the form $L(dt,ds)=a(t,s)ds K(dt)$ for suitable $a$ and $K,$ the convergence of the expected value of the posterior law to the true c.d.f. already ensures that the posterior variance vanishes with increasing data size.
\end{remark}

As an immediate consequence one has the uniform consistency of the Bayes estimator.

\begin{corollary}\thlabel{cor2}
Let the conditions of \thref{deylemma} be fulfilled. Then the Bayes estimate $\E_{BS;n}[G(\cdot)]$ of the service time distribution is uniformly consistent at the true continuous service time distribution $G_0(\cdot)$, that is
\begin{align*}
\sup\limits_{0\leq t < \infty} \bigg\vert\E_{BS;n}[G(t)]-G_0(t)\bigg\vert \stackrel{n\rightarrow\infty}{\longrightarrow}0,
\end{align*}
for $G_0^{\infty}$-almost all sequences $S_1^{\infty}$.
\end{corollary}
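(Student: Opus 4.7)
The plan is to deduce uniform consistency of the posterior mean directly from the sup-norm posterior concentration in \thref{deytheorem}, by exchanging the supremum with the posterior expectation and then splitting the expectation across a sup-norm neighborhood and its complement.

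First, I would observe that by Jensen's inequality applied pointwise and monotonicity of the supremum,
\begin{align*}
\sup_{0\leq t<\infty}\bigl|\E_{BS;n}[G(t)]-G_0(t)\bigr|
&=\sup_{0\leq t<\infty}\bigl|\E_{BS;n}[G(t)-G_0(t)]\bigr|\\
&\leq \sup_{0\leq t<\infty}\E_{BS;n}\bigl[|G(t)-G_0(t)|\bigr]\\
&\leq \E_{BS;n}\Bigl[\sup_{0\leq t<\infty}|G(t)-G_0(t)|\Bigr].
\end{align*}
The last step requires measurability of the functional $G\mapsto\sup_t |G(t)-G_0(t)|$ on $(\mathcal{F}(\R_+),\mathfrak{B}(\cP))$. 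Since c.d.f.s are right-continuous and $G_0$ is moreover continuous, the supremum can be taken over $t\in\mathbb{Q}_+$, yielding measurability as a countable supremum of measurable evaluation maps $G\mapsto|G(t)-G_0(t)|$.

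Next, fix $\epsilon>0$ and set $U_\epsilon:=\{G\in\mathcal{F}(\R_+):\sup_{t}|G(t)-G_0(t)|<\epsilon\}$. Using that $|G(t)-G_0(t)|\leq 1$ since both are c.d.f.s, I would split the posterior expectation as
\begin{align*}
\E_{BS;n}\Bigl[\sup_{t}|G(t)-G_0(t)|\Bigr]
&=\int_{U_\epsilon}\sup_{t}|G(t)-G_0(t)|\,d\Pi_{BS;n}(G)
+\int_{U_\epsilon^c}\sup_{t}|G(t)-G_0(t)|\,d\Pi_{BS;n}(G)\\
&\leq \epsilon\,\Pi_{BS;n}(U_\epsilon)+\Pi_{BS;n}(U_\epsilon^c)
\leq \epsilon+\Pi_{BS;n}(U_\epsilon^c).
\end{align*}
Now \thref{deytheorem} (whose hypotheses are exactly those assumed here via \thref{deylemma}) yields that $\Pi_{BS;n}(U_\epsilon^c)\rightarrow 0$ for $G_0^\infty$-almost all sequences $S_1^\infty$. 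Combining the two displays,
\begin{align*}
\limsup_{n\to\infty}\sup_{0\leq t<\infty}\bigl|\E_{BS;n}[G(t)]-G_0(t)\bigr|\leq \epsilon
\end{align*}
almost surely, and since $\epsilon>0$ was arbitrary, the left-hand side equals $0$, proving the claim.

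I do not anticipate a genuine obstacle here, as the argument is essentially a Fubini-plus-splitting exercise. The only subtle point is the exchange of $\sup_t$ and $\E_{BS;n}$, which relies on measurability of the sup-functional; this is handled by reduction to a countable dense set in $\R_+$ using right-continuity. All other ingredients (boundedness of c.d.f.s and sup-norm concentration) are furnished directly by \thref{deytheorem}.
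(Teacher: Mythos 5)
Your argument is correct and is essentially the paper's own proof: bound the sup of the posterior mean deviation by the posterior expectation of the sup, split over the sup-norm $\epsilon$-neighborhood and its complement (using that the difference of c.d.f.s is bounded by one), and invoke \thref{deytheorem} to kill the complement's posterior mass, letting $\epsilon\downarrow 0$. Your extra remark on measurability of the sup-functional via a countable dense set is a detail the paper leaves implicit, but it does not change the route.
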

\begin{proof}
By above theorem one has
\begin{align*}
&\sup\limits_{0\leq t < \infty}  \Big| \E_{BS;n}[G(t)]-G_0(t) \Big|=\sup\limits_{0\leq t < \infty}\left|\int\limits_{\mathcal{F}} \left( G(t)-G_0(t) \right) \Pi_{BS;n}(dG)\right|\\
&\leq \int\limits_{\{G:\sup \left|G(t)-G_0(t)\right|\geq\epsilon\}}
\sup\limits_{0\leq t < \infty}\left|G(t)-G_0(t)\right|\Pi_{BS;n}(dG) \\
& \qquad \qquad \qquad \qquad  \qquad \qquad + \int\limits_{\{G:\sup \left|G(t)-G_0(t)\right|<\epsilon\}}
\sup\limits_{0\leq t < \infty}\left|G(t)-G_0(t)\right|\Pi_{BS;n}(dG) \\
&<\Pi_{BS;n}\left(\sup\limits_{0\leq t < \infty}\left|G(t)-G_0(t)\right|\geq\epsilon\right) +\epsilon \stackrel{n\rightarrow \infty}{\longrightarrow} \epsilon.
\end{align*}
Since $\epsilon>0$ can be chosen arbitrarily small, the proof is completed and the assertion follows.
\end{proof}

The next lemma establishes the uniform consistency on $\R_+$ of the service time LST in posterior law.

\begin{proposition}\thlabel{prop3}
Let $g(z)=\int e^{-sz} dG(s)$ denote the LST of the random service time distribution $G$ possessing a  beta-Stacy process prior and $g_0(z)=\int e^{-sz} dG_0(s)$ the LST of the corresponding true data generating distribution. Then, if the constraints of \thref{deytheorem} are fulfilled, it holds
\begin{align*}
\Pi_{BS;n}\left(\sup\limits_{0\leq z<\infty}|g(z)-g_0(z)|<\epsilon \right)\stackrel{n\rightarrow\infty}{\longrightarrow}1,
\end{align*}
 for all $\epsilon>0$ and for $G_0^{\infty}$-almost all data sequences $S_1^{\infty}$.
\end{proposition}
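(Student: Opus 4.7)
The plan is to reduce the uniform posterior concentration of $g(z)$ around $g_0(z)$ to the uniform posterior concentration of $G$ around $G_0$, which is exactly the content of \thref{deytheorem}. The key observation is that the LST is a bounded linear functional of the c.d.f. whose "Lipschitz constant" in sup-norm equals one.

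First I would apply integration by parts to rewrite, for fixed $z>0$,
\begin{align*}
g(z)-g_0(z)=\int_0^\infty e^{-sz}\,d(G-G_0)(s)=z\int_0^\infty \bigl(G(s)-G_0(s)\bigr)e^{-sz}\,ds.
\end{align*}
The boundary terms vanish because $G(0)=G_0(0)=0$ in the continuous setting considered in \thref{deytheorem}, while $e^{-sz}(1-G(s))\to 0$ and $e^{-sz}(1-G_0(s))\to 0$ as $s\to\infty$ for every $z>0$. From this identity the crude pointwise estimate
\begin{align*}
|g(z)-g_0(z)|\leq z\,\sup_{s\geq 0}|G(s)-G_0(s)|\int_0^\infty e^{-sz}\,ds=\sup_{s\geq 0}|G(s)-G_0(s)|
\end{align*}
holds for every $z>0$; at $z=0$ both LSTs equal $1$ so the inequality extends trivially. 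Taking the supremum over $z\in\R_+$ gives the deterministic bound
\begin{align*}
\sup_{z\geq 0}|g(z)-g_0(z)|\leq \sup_{s\geq 0}|G(s)-G_0(s)|,
\end{align*}
valid for every $G\in\mathcal{F}(\R_+)$ (with $G(0)=0$).

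Consequently, for every $\epsilon>0$, the set $\{G:\sup_{s}|G(s)-G_0(s)|<\epsilon\}$ is contained in $\{G:\sup_z|g(z)-g_0(z)|<\epsilon\}$, and by monotonicity of $\Pi_{BS;n}$ together with \thref{deytheorem} one gets
\begin{align*}
\Pi_{BS;n}\!\left(\sup_{0\leq z<\infty}|g(z)-g_0(z)|<\epsilon\right)\geq \Pi_{BS;n}\!\left(\sup_{0\leq s<\infty}|G(s)-G_0(s)|<\epsilon\right)\stackrel{n\to\infty}{\longrightarrow}1
\end{align*}
for $G_0^\infty$-almost every data sequence $S_1^\infty$, which is the claim.

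The argument is really a soft one: all the analytic difficulty sits in \thref{deytheorem}, and the only things left to check here are the vanishing of the boundary terms in the integration by parts (straightforward from continuity of $G_0$ and the fact that the beta-Stacy prior concentrates on proper distributions with $G(0)=0$) and the exchange of the supremum with the integral bound, both of which are routine. Hence I do not foresee a serious obstacle; the only mild subtlety is ensuring that the integration-by-parts step is applied in a way that is valid simultaneously for the true $G_0$ and for beta-Stacy-sampled $G$, which is fine since both are bona fide c.d.f.s on $\R_+$.
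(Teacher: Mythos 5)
Your proposal is correct and follows essentially the same route as the paper: integration by parts of the Riemann--Stieltjes integral to show $\sup_{z\geq 0}|g(z)-g_0(z)|\leq \sup_{s\geq 0}|G(s)-G_0(s)|$, i.e.\ that $G\mapsto g$ is (Lipschitz-)continuous in sup-norm, and then transferring the uniform posterior consistency of $G$ from \thref{deytheorem} to $g$ via the resulting neighborhood inclusion.
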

\begin{proof}
The assertion of the proposition follows from \thref{deytheorem} and the continuous mapping theorem applied to the mapping $G(\cdot)\mapsto g(\cdot)$ which is continuous w.r.t. the sup-norm. Indeed, take a $\delta>0$ and let $U_{G_0,\delta}$ be a uniform $\delta$-neighborhood of the true service time distribution and let $G\in U_{G_0,\delta}$ with corresponding LST $g$.
Then, by integration by parts of the Riemann-Stieltjes integral it holds that
\begin{align*}
&\sup\limits_{0\leq z<\infty}|g(z)-g_0(z)|=\sup\limits_{0\leq z <\infty}\left|\int_0^{\infty} e^{-sz}[G-G_0](ds)\right|\\
&=\sup\limits_{0\leq z<\infty}\bigg \vert [e^{-sz}[G(s)-G_0(s)]_{|_{s=\infty}}-[e^{-sz}[G(s)-G_0(s)]_{|_{s=0}}-\int_0^{\infty} [G(s)-G_0(s)]de^{-zs} \bigg \vert \\
&=\sup\limits_{0\leq z<\infty}\left|\int_0^{\infty}[G(s)-G_0(s)] de^{-zs}\right|=\sup\limits_{0\leq z<\infty}\left|\int_0^{\infty}[G(s)-G_0(s)] ze^{-zs} ds\right|\\
&\leq \sup\limits_{0\leq z<\infty}\int_0^{\infty}ze^{-zs} \left|G(s)-G_0(s)\right| ds\\
&<\delta \sup\limits_{0\leq z<\infty}\int_0^{\infty}ze^{-zs} ds=\delta,
\end{align*}
which completes the proof and shows the claim.
\end{proof}

Next, the consistency of the estimator $g_n^*(z)$ of the LST of the service time distribution is studied.
On basis of the previous result, the following lemma establishes the uniform consistency on $\R_+$ of the estimator $g_n^*(z)$.

\begin{lemma}\thlabel{lemma4}
Let $g_n^*(\cdot), g_0(\cdot)$ as above. Then, under the constraints of Theorem \ref{deytheorem}, it holds
\begin{align*}
\sup\limits_{0\leq z <\infty}|g_n^*(z)-g_0(z)|\stackrel{n\rightarrow\infty}{\longrightarrow} 0,
\end{align*}
for $G_0^{\infty}$-almost all sequences of data $S_1^{\infty}$.
\end{lemma}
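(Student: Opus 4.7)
The plan is to piggy-back on the two previously established facts: the uniform (sup-norm) consistency of the Bayes estimate $\hat G_n$ from Corollary \ref{cor2}, and the continuity of the map $G(\cdot)\mapsto g(\cdot)$ from sup-norm on $\mathcal F(\R_+)$ to sup-norm on $\R_+$ that was extracted in the proof of Proposition \ref{prop3}. Since $g_n^*(z)=\int_0^\infty e^{-zs}\,d\hat G_n(s)$ is literally the LST of $\hat G_n$, and since $\hat G_n$ is itself a genuine c.d.f.\ (being a mixture of c.d.f.s under the posterior law), the estimator $g_n^*$ falls into exactly the setting in which the integration-by-parts argument of Proposition \ref{prop3} was carried out. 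So I would not prove anything new; I would merely re-run that argument with $\hat G_n$ in place of the random $G$.

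First, I would fix $\delta>0$ and invoke Corollary \ref{cor2} to obtain, for $G_0^\infty$-almost every data sequence $S_1^\infty$, an index $n_0=n_0(\delta,S_1^\infty)$ such that $\sup_{t\geq 0}|\hat G_n(t)-G_0(t)|<\delta$ for all $n\geq n_0$. Second, for any such $n$, the same Riemann--Stieltjes integration by parts as in Proposition \ref{prop3} yields
\begin{align*}
g_n^*(z)-g_0(z)=\int_0^\infty [G_0(s)-\hat G_n(s)]\,z e^{-zs}\,ds,
\end{align*}
the boundary contributions vanishing because $G_0(0)=\hat G_n(0)=0$ (as $G_0$ is continuous and $\hat G_n$ is supported on $\R_+$) and because both $G_0$ and $\hat G_n$ tend to $1$ at $\infty$. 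Bounding the integrand and using $\int_0^\infty z e^{-zs}\,ds=1$ gives
\begin{align*}
\sup_{0\leq z<\infty}|g_n^*(z)-g_0(z)|\leq \sup_{0\leq s<\infty}|\hat G_n(s)-G_0(s)|<\delta,
\end{align*}
uniformly in $z$.

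Finally, since $\delta>0$ was arbitrary, letting $\delta\downarrow 0$ along a countable sequence and taking the union of the corresponding $G_0^\infty$-null sets yields the desired almost sure uniform convergence $\sup_{z\geq 0}|g_n^*(z)-g_0(z)|\to 0$. I do not foresee a real obstacle here: the only point that deserves a moment of care is the justification that $\hat G_n$ is genuinely a sub-probability c.d.f.\ with $\hat G_n(0)=0$ and $\lim_{t\to\infty}\hat G_n(t)=1$ almost surely, so that the boundary terms in the integration by parts indeed vanish; this follows from the posterior construction in Section 3 (the posterior law is a beta-Stacy process, which a.s.\ produces honest c.d.f.s on $\R_+$, and expectation of c.d.f.s is a c.d.f.\ by dominated convergence).
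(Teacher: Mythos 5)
Your proposal is correct and follows essentially the same route as the paper: Riemann--Stieltjes integration by parts to write $g_n^*(z)-g_0(z)$ as $\int_0^{\infty}[\hat G_n(s)-G_0(s)]\,z e^{-zs}\,ds$, a bound by $\sup_{s}|\hat G_n(s)-G_0(s)|$ using $\sup_z\int_0^{\infty} z e^{-zs}\,ds\leq 1$, and then Corollary \ref{cor2} for the almost sure uniform consistency of $\hat G_n$. Your extra remarks on the vanishing boundary terms and the countable intersection of null sets are fine refinements of the same argument, not a different approach.
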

\begin{proof}
Let $\epsilon>0$ be arbitrary and let $\hat{G}_n(\cdot):=\E_{BS;n}[G(\cdot)]$. Then by properties of the Riemann-Stieltjes integral, one has
\begin{align*}
&\sup\limits_{0\leq z <\infty} |g_n^*(z)-g_0(z)| \\
&=\sup\limits_{0\leq z <\infty} \left|\int_0^{\infty} e^{-zs} d\hat{G}_n(s)-\int_0^{\infty} e^{-zs} dG_0(s)\right| \\
&=\sup\limits_{0\leq z <\infty} \big\vert \left[e^{-zs}\hat{G}_n(s)\right]_{|_{s=\infty}}-\left[e^{-zs}\hat{G}_n(s)\right]_{|_{s=0}}-\int_0^{\infty}\hat{G}_n(s)  de^{-zs}\\
&\qquad \qquad \qquad \qquad \qquad \qquad \qquad -\left[e^{-zs}G_0(s)\right]_{|_{s=\infty}}+\left[e^{-zs}G_0(s)\right]_{|_{s=0}} +\int_0^{\infty} G_0(s) de^{-zs} \big\vert \\
&=\sup\limits_{0\leq z <\infty}\left| \int_0^{\infty}\hat{G}_n(s)d(e^{-sz})-\int_0^{\infty} G_0(s)d(e^{-sz}) \right|\\
&= \sup\limits_{0\leq z <\infty} \left|\int_0^{\infty}[\hat{G}_n(s)-G_0(s)]ze^{-zs}ds\right|\\
&< \sup\limits_{0\leq s <\infty}\left|\hat{G}_n(s)-G_0(s)\right|.
\end{align*}
Hence, the assertion follows from \thref{cor2} and the proof is completed.
\end{proof}

Since the Bayesian estimate of the random mean of $G$, i.e. $\E_{BS;n}\left[\int_0^{\infty} t dG(t)\right]$, is used to define estimators of several queueing characteristics, its posterior consistency is examined next. For the most prominent prior process, namely the Dirichlet process a rather general result is known. This is reviewed briefly. So, let $P\sim \mathcal{D}_{\alpha}$ be a random probability measure that is distributed according to a Dirichlet prior with finite measure $\alpha$ as parameter. Let $f:\mathcal{S}\rightarrow \R$ be measurable. Then $[$c.f. \citet{feigin1989linear}$]$ $\int \left|f\right| d\alpha <\infty \Rightarrow \int\left|f\right| dP <\infty $ with $\mathcal{D}_{\alpha}$ probability one and $\E_{\mathcal{D}_{\alpha}}\left[\int f dP\right]=\int f d\E_{\mathcal{D}_{\alpha}}\left[P\right]=\int f d\alpha$. This fact in combination with an assumption that the state space is countable makes it easy to show that posterior consistency of the random measure $P$ induces posterior consistency of the random mean of $P$. However, here neither the state space is assumed to be countable nor the random measure is assumed to possess a Dirichlet prior. Thus the posterior consistency of the random mean of $G$ which is drawn according to a beta-Stacy process is studied in more depth and an affirmative result is given.

\begin{lemma}\thlabel{lemma5}
Suppose $G(\cdot)$ is drawn according to a beta-Stacy process with parameters $(\alpha,\beta)$, where $\alpha$ is a measure on $\R_+$ and $\beta(s)\geq 1$ such that
\begin{enumerate}
\item[(1.)] $\int_{\R_+}[\beta(s)]^{-1}\alpha(ds)=\infty$
\item[(2.)] $\int_{\R_+}\exp\left[-\int_0^t [\beta(s)]^{-1}\alpha(ds)\right]dt<\infty$.
\end{enumerate}
Then the posterior expectation of the random mean of $G$ converges $G_0^{\infty}$-a.s.~to the true one. That is for $G_0^{\infty}$-almost all sequences of data $X_1^{\infty}$ it holds
\begin{align*}
 \E_{BS;n}\left[\int_{\R_+} t dG(t)\right]:= \E_{BS}\left[\int_{\R_+} t dG(t)\bigg\vert X_1^n\right]\stackrel{n\rightarrow \infty}{\longrightarrow}\int_{\R_+}tdG_0(t).
\end{align*}
\end{lemma}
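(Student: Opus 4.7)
The plan is to identify the posterior mean of $\int t\,dG(t)$ with an integral of the posterior expected survival function, and then pass to the limit in two stages. By Fubini's theorem—whose applicability is exactly what conditions (1) and (2) buy, via equation~(13) of \citet{epifani2003exponential}, since they guarantee that the posterior mean of $\int t\,dG(t)$ is finite $G_0^\infty$-almost surely—one obtains
\begin{align*}
\hat{\mu}_n \;=\; \E_{BS;n}\!\left[\int_0^\infty \bar{G}(t)\,dt\right] \;=\; \int_0^\infty \bar{\hat{G}}_n(t)\,dt,
\end{align*}
with $\bar{\hat{G}}_n(t) := \E_{BS;n}[\bar{G}(t)] = 1-\hat{G}_n(t)$, and similarly $\int t\,dG_0(t) = \int_0^\infty \bar{G}_0(t)\,dt$ (finite since the $X_i$ have a mean under $G_0$). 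The task is thus reduced to showing $\int_0^\infty \bar{\hat{G}}_n(t)\,dt \to \int_0^\infty \bar{G}_0(t)\,dt$, and \thref{deytheorem}/\thref{cor2} already provides pointwise (indeed uniform) convergence of the integrand.

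The easier half is the liminf bound: applying Fatou's lemma to the non-negative integrands $\bar{\hat{G}}_n$ and using pointwise convergence gives $\liminf_n \hat{\mu}_n \geq \int_0^\infty \bar{G}_0 \,dt$ directly. For the reverse inequality, fix $\epsilon>0$, choose $T$ so large that $\int_T^\infty \bar{G}_0(t)\,dt < \epsilon$, and split $\hat{\mu}_n = \int_0^T \bar{\hat{G}}_n + \int_T^\infty \bar{\hat{G}}_n$. Uniform convergence on $[0,T]$ (\thref{cor2}) forces $\int_0^T \bar{\hat{G}}_n\,dt \to \int_0^T \bar{G}_0\,dt$. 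It remains to bound the tail $\int_T^\infty \bar{\hat{G}}_n(t)\,dt$ by $\epsilon + o(1)$ for $n$ large.

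For the tail I would exploit the explicit product-integral representation for $\bar{\hat{G}}_n$ recalled just before the lemma. Enumerating the data in increasing order $X_{(1)}<\cdots<X_{(n)}$, one has $M_n(X_{(i)}) = n-i+1$, so the jump factor $\exp[-\int_0^t \frac{\beta+M_n-1}{\beta+M_n}\,dN_n]$ telescopes and can be compared with the empirical survival function $\bar{G}_n^{\mathrm{emp}}(t) := \frac{1}{n}\sum_{i=1}^n\mathbf{1}_{[t,\infty)}(X_i)$; condition~(1) forces the continuous factor $\exp[-\int_0^t \alpha(ds)/(\beta(s)+M_n(s))]$ to vanish as $t\to\infty$, and condition~(2) supplies $\exp[-\int_0^\cdot \alpha(ds)/\beta(s)]$ as an integrable envelope that is active only at small $n$. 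Combining these, one deduces $\bar{\hat{G}}_n(t) \leq \bar{G}_n^{\mathrm{emp}}(t) + r_n(t)$ with $\int_0^\infty r_n(t)\,dt \to 0$. The classical SLLN gives $\int_T^\infty \bar{G}_n^{\mathrm{emp}}(t)\,dt = \bar{X}_n - \frac{1}{n}\sum_{i=1}^n (X_i \wedge T) \;\longrightarrow\; \int_0^\infty \bar{G}_0 \,dt - \int_0^T \bar{G}_0\,dt \;=\; \int_T^\infty \bar{G}_0\,dt \;<\;\epsilon$ $G_0^\infty$-a.s., and together with the bounded-interval convergence this yields $\limsup_n \hat{\mu}_n \leq \int_0^\infty \bar{G}_0\,dt + 2\epsilon$. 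Letting $\epsilon\downarrow 0$ closes the argument.

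The main obstacle I anticipate is making the comparison $\bar{\hat{G}}_n(t) \leq \bar{G}_n^{\mathrm{emp}}(t) + r_n(t)$ fully rigorous in the tail; the interplay between the prior-hazard continuous factor and the data-driven jump factor must be handled carefully, since the telescoping identity is exact only in the "no-prior" limit, and the deviation must be controlled by precisely the two moment hypotheses imposed in the statement. The Fubini step and Fatou half are essentially free, but ruling out tail mass escaping to infinity is where the genuine technical work lives.
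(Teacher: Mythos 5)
Your reduction via Fubini to $\hat{\mu}_n=\int_0^\infty\E_{BS;n}[1-G(t)]\,dt$, the Fatou half, and the use of \thref{cor2} on a compact $[0,T]$ are all sound, and up to that point you are essentially working with the same object as the paper (the Epifani et al.\ formula for the posterior mean \emph{is} $\int_0^\infty\E_{BS;n}[\bar G(t)]\,dt$ written out explicitly). But the step you yourself flag as the obstacle is a genuine gap, and the route you sketch for it does not work under the stated hypotheses. The claimed tail bound $\E_{BS;n}[\bar G(t)]\le \bar G_n^{\mathrm{emp}}(t)+r_n(t)$ with $\int_0^\infty r_n\to 0$ cannot be extracted from the telescoping of the jump factors, because the only assumption on $\beta$ is $\beta(s)\ge 1$: since $x\mapsto (x-1)/x$ is increasing, $\beta\ge 1$ gives $\frac{\beta(X_{(i)})+M_n(X_{(i)})-1}{\beta(X_{(i)})+M_n(X_{(i)})}\ge\frac{n-i+1}{n-i+2}$, i.e.\ the comparison with the empirical survival function goes in the \emph{wrong} direction (it yields a lower bound, useful only for your Fatou half). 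An upper bound of the empirical type would require $\beta$ bounded above, which is not assumed; when $\beta$ is large the jump factors are close to one and the tail of $\E_{BS;n}[\bar G(t)]$ is governed by the continuous factor $\exp[-\int_0^t\alpha(ds)/(\beta(s)+M_n(s))]$, whose integrability uniformly in $n$ is exactly where conditions (1.)--(2.) must be turned into a quantitative estimate — and your proposal never does this; the conditions are only gestured at. So the part of the argument that rules out mass escaping to infinity is missing, not merely unpolished.

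For contrast, the paper avoids a truncation/tail-comparison altogether: it writes the product over observations as $\exp[-\int_0^t\log\frac{\beta+M_n}{\beta+M_n-1}\,dN_n]$, uses $\beta\ge1$ to sandwich the logarithm between $(\beta+M_n)^{-1}$ and $(\beta+M_n-1)^{-1}$, so that the posterior mean is squeezed between two integrals of the form $\int_0^\infty\exp[-\int_0^t\frac{\alpha(ds)+N_n(ds)}{\beta(s)+M_n(s)+O(1)}]\,dt$, and then passes to the limit under the outer integral using the strong law ($M_n/n\to\bar G_0$, $N_n/n\to G_0$) and the cumulative-hazard identity $\exp[-\int_0^t dG_0/(1-G_0)]=1-G_0(t)$, which gives $\int_0^\infty(1-G_0(t))\,dt=\mu_0$. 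If you want to salvage your truncation strategy, the dominating/tail estimate has to come from that same sandwich (the $\alpha$-term in the exponent, controlled by (1.)--(2.)), not from the empirical survival function.
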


\begin{proof}
First of all note that, due to \citet{epifani2003exponential}, the first moment of the random mean under the prior does exist and is given as
\begin{align*}
&\E_{BS;n}\left[\int_{\R_+} t dG(t)\right]= \int\limits_0^{\infty} \exp\left[-\int_0^t \frac{\alpha(ds)}{\beta(s)+M_n(s)}\right]\times
\left[\prod\limits_{\{i:X_i\leq t\}} \frac{\beta(X_i)+M_n(X_i)-1}{\beta(X_i)+M_n(X_i)}\right] dt,
\end{align*}
where $M_n(s)=\sum_{j=1}^n \delta_{X_j}[s,\infty)$. Thus, letting $N_n(s)=\sum_{i=1}^n \delta_{X_i}[0,s)$, one obtains
\begin{align*}
&\E_{BS;n}\left[\int_{\R_+} t dG(t)\right]\\
&=\int\limits_0^{\infty} \exp\left[-\int_0^t \frac{\alpha(ds)}{\beta(s)+M_n(s)}\right]
\exp\left[-\int_0^t \log\left(\frac{\beta(s)+M_n(s)}{\beta(s)+M_n(s)-1}\right)dN_n(s)\right] dt.
\end{align*}
Since $\beta(s)\geq 1$, by elementary properties of the logarithm, it follows
\begin{align*}
\frac{1}{\beta(s)+M_n(s)}\leq \log\left(\frac{\beta(s)+M_n(s)}{\beta(s)+M_n(s)-1}\right)\leq\frac{1}{\beta(s)+M_n(s)-1},
\end{align*}
which in turn implies
\begin{align*}
&\int\limits_0^{\infty} \exp\left[-\int_0^t \frac{\alpha(ds)}{\beta(s)+M_n(s)}\right]
\times  \exp\left[-\int_0^t \frac{N_n(ds)}{\beta(s)+M_n(s)-1}\right] dt\\
&\leq\E_{BS;n}\left[\int_{\R_+} t dG(t)\right]\\
&\leq \int\limits_0^{\infty} \exp\left[-\int_0^t \frac{\alpha(ds)+N_n(ds)}{\beta(s)+M_n(s)}\right]dt.
\end{align*}
such that it remains to show that the bounding terms converge to the mean of the true c.d.f..
From a straight-forward application of the monotone convergence theorem and by continuity of the exponential function, it follows
\begin{align*}
&\lim\limits_{n\rightarrow \infty} \E_{BS;n}\left[\int_{\R_+} t dG(t)\right]=\int\limits_0^{\infty} \exp\left[-\int_0^t \lim\limits_{n\rightarrow \infty} \frac{1/n N_n(ds)}{1/n \beta(s)+1/n M_n(s)}  \right] dt\\
&=\int\limits_0^{\infty} \exp\left[-\int_0^t  \frac{G_0(ds)}{1-G_0(s)}  \right] dt.
\end{align*}
Now, note that $\frac{G_0(ds)}{1-G_0(s)}=\lambda(s) ds=\bP(s< S \leq s+\Delta s | S>s)$, where $\lambda(s)=\lim_{\Delta s \rightarrow 0}\frac{\bP(s < S \leq s+\Delta s)}{\Delta s [1-G_0(s)]}$ usually denotes the hazard function. Hence $\Lambda(t)=\int_0^t \lambda(s) ds=\int_0^t \frac{G_0(ds)}{1-G_0(s)}$ is the cumulative hazard  until $t\geq 0$.  On the other hand $\Lambda(t)=-\log(1-G_0(t))$. Thus,
\begin{align*}
\exp\left[-\int_0^t\frac{G_0(ds)}{1-G_0(s)}\right]=\exp\left[-\int_0^t \lambda(s) ds\right]=\exp\left[-\Lambda(t)\right]=\exp\left[\log(1-G_0(t))\right]=1-G_0(t),
\end{align*}
which in turn completes the proof.
\end{proof}

The next lemma establishes the contraction of the mass of the posterior law of the random mean.

\begin{lemma}\thlabel{lemma6}
If, in addition to the assumptions of the previous lemma, the parameter $(\alpha,\beta)$ meets the condition
\begin{align*}
\int_{\R_+} \exp\left[-\int_0^{\sqrt{t}} \left[\beta(s)\right]^{-1}\alpha(ds)\right]dt<\infty
\end{align*}
then the posterior variance of the random mean $\int t G(dt)$ vanishes  $G_0^{\infty}$-a.s.~as the size of the data increases. That is, for $G_0^{\infty}$-almost all sequences of data $X_1^{\infty}$ it holds
\begin{align*}
\mathbb{V}_{BS;n}\left[\int_{\R_+} t G(dt)\right]:= \mathbb{V}_{BS}\left[\int_{\R_+} t G(dt) \bigg\vert X_1^n \right]\stackrel{n \rightarrow \infty}{\longrightarrow} 0.
\end{align*}
\end{lemma}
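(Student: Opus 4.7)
Writing $M := \int_{\R_+} t\, G(dt)$ and $\mathbb{V}_{BS;n}[M] = \E_{BS;n}[M^2] - (\E_{BS;n}[M])^2$, \thref{lemma5} supplies $(\E_{BS;n}[M])^2 \to \bigl(\int_{\R_+} t\, G_0(dt)\bigr)^2$ almost surely, so the plan reduces to establishing $\E_{BS;n}[M^2] \to \bigl(\int_{\R_+} t\, G_0(dt)\bigr)^2$.

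Using the identity $M = \int_0^{\infty}\bar G(t)\,dt$ (valid a.s.\ under the first-moment condition) together with Tonelli,
\[
\E_{BS;n}[M^2] = 2\int_0^{\infty}\!\!\int_0^{t}\E_{BS;n}\bigl[\bar G(s)\bar G(t)\bigr]\,ds\,dt,
\]
and for $s<t$ the NTR property, i.e.\ independence of $A(s) = -\log\bar G(s)$ and $A(t)-A(s)$, factorizes the integrand as $\E_{BS;n}[\bar G(s)\bar G(t)] = \E_{BS;n}[\bar G(s)^2]\cdot\E_{BS;n}[\bar G(t)/\bar G(s)]$. Both factors admit closed-form Lévy--Khintchine expressions against the posterior beta-Stacy Lévy measure with parameters $(c_n^{*},H_n^{*})$, in direct analogy with the moment formulas of \citet{epifani2003exponential} underlying \thref{lemma5}.

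Pointwise convergence of the integrand to $\bar G_0(s)\bar G_0(t)$ then follows from \thref{cor2} and arguments parallel to those of \thref{lemma5}: the ratio factor equals $\bar H_n^{*}(t)/\bar H_n^{*}(s)\to \bar G_0(t)/\bar G_0(s)$, while the square factor unwinds to $\bar H_n^{*}(s)$ times an exponential correction of order $1/c_n^{*}(u)$ whose limit is again $\bar H_n^{*}(s)$, because $c_n^{*}(u) = (c(u)\bar H(u)+M_n(u)-N_n(u))/\bar H_n^{*}(u)$ diverges at rate $n$ by the strong law applied to $M_n/n$ and $N_n/n$; hence $\E_{BS;n}[\bar G(s)^2]\to \bar G_0(s)^2$. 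Integrating the pointwise limit produces $2\int_0^{\infty}\!\int_0^t \bar G_0(s)\bar G_0(t)\,ds\,dt = \bigl(\int_0^{\infty}\bar G_0(t)\,dt\bigr)^2 = \bigl(\int_{\R_+} t\, G_0(dt)\bigr)^2$.

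The main obstacle is justifying the interchange of limit and double integration, and this is where the added hypothesis enters decisively. Via the substitution $u=\sqrt{t}$ it is equivalent to $\int_0^{\infty} u\,\bar H(u)\,du<\infty$, i.e.\ the prior guess $H$ possesses a finite second moment; this in turn guarantees (via the product-integral form of $H_n^{*}$, whose tail beyond the largest observation reduces to a constant multiple of $\bar H$) that the posterior second moment of $G$ exists almost surely. Combining the elementary estimate $\bar G(s)\bar G(t)\le \bar G(s\vee t)$ with the upper sandwich on $\E_{BS;n}[\bar G(\cdot)]$ already derived in the proof of \thref{lemma5}, and splitting the integration domain $\{s<t\}$ at $s=\sqrt{t}$ so that the tail matches the form of the hypothesis, furnishes an $n$-uniform integrable majorant for the integrand. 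Dominated convergence then yields $\E_{BS;n}[M^2]\to \bigl(\int_{\R_+} t\, G_0(dt)\bigr)^2$ and, combined with \thref{lemma5}, closes the proof.
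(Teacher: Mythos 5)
Your overall plan --- reduce the claim to $\E_{BS;n}\bigl[M^2\bigr]\to\bigl(\int_{\R_+} t\,dG_0(t)\bigr)^2$ with $M=\int_{\R_+} t\,G(dt)$ and combine with \thref{lemma5} --- coincides with the paper's, and your re-derivation of the double-integral representation via $M=\int_0^\infty \bar G(u)\,du$, Tonelli, and the posterior NTR factorization $\E_{BS;n}[\bar G(s)\bar G(t)]=\E_{BS;n}[\bar G(s)^2]\,\bar H_n^*(t)/\bar H_n^*(s)$ is a legitimate substitute for what the paper actually does, namely quoting the closed-form posterior second-moment formula of \citet{epifani2003exponential} and passing it to the limit by the same sandwich arguments as in \thref{lemma5}, followed by Fubini and symmetry of the limiting integrand. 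The genuine gap in your proposal sits exactly where the $\sqrt{t}$-hypothesis has to do its work: the interchange of limit and double integration. You assert that the hypothesis --- equivalently, finiteness of the second moment of the prior guess $H$ --- ``furnishes an $n$-uniform integrable majorant,'' but you never construct it, and the proposed mechanism cannot work as stated. The quantities to be dominated, $\E_{BS;n}[\bar G(s)\bar G(t)]\le \bar H_n^*(t)$ (or the upper sandwich bound from \thref{lemma5}), are data-driven: for large $t$ their decay is governed by the empirical part $\exp\bigl[-\int_0^t N_n(ds)/(\beta(s)+M_n(s))\bigr]$, essentially an empirical survival function, and not by the prior tail $\exp\bigl[-\int_0^t[\beta(s)]^{-1}\alpha(ds)\bigr]=\bar H(t)$. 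A condition on $(\alpha,\beta)$ alone therefore cannot yield a majorant uniform in $n$; you would additionally need control of the data-dependent part (for instance the standing assumption $\int t^2\,dG_0(t)<\infty$ together with a strong-law argument, or a uniform-integrability argument), which you never invoke. In the paper the hypothesis plays a different role: it is precisely the condition of Proposition 4 of \citet{epifani2003exponential} ensuring that the posterior second moment of the random mean exists and equals their explicit double integral, which is then treated with the limiting arguments of \thref{lemma5}.

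A secondary, lesser issue: your argument that $\E_{BS;n}[\bar G(s)^2]\to\bar G_0(s)^2$ is only heuristic. The posterior process has fixed points of discontinuity at the observations, so the L\'evy--Khintchine expression involves a product over beta-distributed fixed jumps in addition to the continuous part, and the claim that the correction is of order $1/c_n^*$ with $c_n^*$ diverging at rate $n$ must be verified from those formulas (note that the paper's displayed $c_n^*$ contains $M_n-N_n$ in the numerator, so the rate-$n$ divergence you invoke is not immediate from it). This step is comparable in rigor to the paper's own ``similar arguments as in the previous lemma,'' but as written it is an assertion rather than a proof.
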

\begin{proof}
Due to \thref{lemma5} and the continuous mapping theorem the claim follows if the second moment of the random mean under the posterior converges a.s. to the square of the mean of the true distribution function $G_0(\cdot)$.
From \citet{epifani2003exponential} the second moment under the prior law exists and the second moment under the posterior law of the random mean is given by
\begin{align*}
&\E_{BS;n}\left[\left(\int_0^{\infty} t dG(t)\right)^2\right]\\
&=2 \int\limits_0^{\infty} \int\limits_r^{\infty} \exp\left[-\int_0^r \frac{\alpha(dx)}{\beta(x)+M_n(x)+1}\right] \exp\left[-\int_0^s \frac{\alpha(dx)}{\beta(x)+M_n(x)}\right]\\
&\qquad \qquad \qquad \qquad  \times \left(\prod\limits_{\{i:X_i\leq r\}}\frac{\beta(X_i)+M_n(X_i)}{\beta(X_i)+M_n(X_i)+1}\right)
\left(\prod\limits_{\{j:X_j\leq s\}}\frac{\beta(X_j)+M_n(X_j)-1}{\beta(X_j)+M_n(X_j)}\right) ds dr\\
&=2 \int\limits_0^{\infty} \exp\left[-\int_0^r \frac{\alpha(dx)}{\beta(x)+M_n(x)+1}\right] \exp\left[-\int_0^r \log\left(\frac{\beta(x)+M_n(x)}{\beta(x)+M_n(x)+1}\right)dN_n(x)\right]\\
&\qquad \qquad \qquad \int\limits_r^{\infty} \exp\left[-\int_0^s \frac{\alpha(dy)}{\beta(y)+M_n(y)}\right] \exp\left[-\int_0^s \log\left(\frac{\beta(y)+M_n(y)}{\beta(y)+M_n(y)-1}\right)dN_n(y) \right]ds dr\\
\end{align*}

Hence, by similar arguments as in the proof of the previous lemma and using Fubini's theorem, it follows that for $G_0^{\infty}-$almost all sequences of data

\begin{align*}
\E_{BS;n}\left[\left(\int_0^{\infty} t dG(t)\right)^2\right]\stackrel{n \rightarrow \infty}{\longrightarrow} &
2\int\limits_0^{\infty} \exp\left[-\int_0^r\frac{dG_0(x)}{1-G_0(x)}\right]\int\limits_r^{\infty} \exp\left[-\int_0^s \frac{dG_0(y)}{1-G_0(y)}\right] ds dr\\
& =2\int\limits_0^{\infty} \int\limits_r^{\infty} \exp\left[-\int_0^r\frac{dG_0(x)}{1-G_0(x)}\right] \exp\left[-\int_0^s \frac{dG_0(y)}{1-G_0(y)}\right] ds dr. \tag{\#}
\end{align*}

Furthermore, a straight-forward application of Fubini's theorem yields

\begin{align*}
\int\limits_0^{\infty}\int\limits_0^sg(r,s)dr ds=\int\limits_0^{\infty}\int\limits_r^{\infty} g(r,s) dsdr,
\end{align*}
where g is an arbitrary integrable function $g:[0,\infty)^2\rightarrow \R_+$.
Now, let $f(r,s)$ denote the integrand of the last integral of equation (\#). Since $f$ is symmetric, i.e. $f(r,s)=f(s,r)$, it follows from above equality and further application of Fubini's theorem
\begin{align*}
&2 \int\limits_0^{\infty} \int\limits_r^{\infty} f(r,s) ds dr=\int\limits_0^{\infty} \int\limits_0^{s} f(r,s) dr ds +\int\limits_0^{\infty} \int\limits_r^{\infty}f(r,s) ds dr \\
&=\int\limits_0^{\infty} \int\limits_0^{r}f(s,r)dsdr+\int\limits_0^{\infty} \int\limits_r^{\infty}f(r,s)dsdr=\int\limits_0^{\infty} \int\limits_0^{\infty} f(r,s) dr ds .
\end{align*}

Therefore,

\begin{align*}
\E_{BS;n}\left[\left(\int_0^{\infty} t dG(t)\right)^2\right]\stackrel{n \rightarrow \infty}{\longrightarrow}  \left(\int_0^{\infty} t dG_0(t)\right)^2,
\end{align*}

which completes the proof of the assertion.
\end{proof}

\begin{theorem}\thlabel{theorem7}
Under the assumptions of \thref{lemma5} and \thref{lemma6} the mean of the random c.d.f. $G$ possesses the property of posterior consistency. That is for all $\epsilon>0$ and $G_0^{\infty}$-almost all sequences of data it holds
\begin{align*}
\Pi_{BS}\left(\left|\int_{\R_+} t G(dt)-\int_{\R_+}tG_0(dt)\right|>\epsilon \bigg\vert X_1^n\right)\stackrel{n \rightarrow \infty}{\longrightarrow} 0.
\end{align*}
\end{theorem}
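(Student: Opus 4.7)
The plan is to combine \thref{lemma5} and \thref{lemma6} via a Chebyshev-type argument, since together they assert that the posterior law of the random mean has both its center and its dispersion pinned down asymptotically. Write $M := \int_{\R_+} t\, G(dt)$ for the random mean under the posterior, $m_0 := \int_{\R_+} t\, G_0(dt)$ for its true value, and $\hat{m}_n := \E_{BS;n}[M]$ for the Bayes estimate. The goal is to show $\Pi_{BS;n}(|M - m_0| > \epsilon) \to 0$ for $G_0^\infty$-almost all sequences $X_1^\infty$.

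First I would split the deviation by triangle inequality,
\begin{align*}
|M - m_0| \leq |M - \hat{m}_n| + |\hat{m}_n - m_0|.
\end{align*}
By \thref{lemma5}, there is a $G_0^\infty$-null set outside of which $\hat{m}_n \to m_0$, so for any fixed $\epsilon > 0$ and such a sequence, there exists $n_0 = n_0(\epsilon, X_1^\infty)$ with $|\hat{m}_n - m_0| < \epsilon/2$ for all $n \geq n_0$. On the event $\{|M - m_0| > \epsilon\}$ one must then have $|M - \hat{m}_n| > \epsilon/2$, so for $n \geq n_0$ the set-inclusion
\begin{align*}
\{G : |M - m_0| > \epsilon\} \subseteq \{G : |M - \hat{m}_n| > \epsilon/2\}
\end{align*}
holds pointwise in $G$, whence $\Pi_{BS;n}(|M - m_0| > \epsilon) \leq \Pi_{BS;n}(|M - \hat{m}_n| > \epsilon/2)$.

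Next, since $\hat{m}_n$ is exactly the posterior mean of $M$, Chebyshev's inequality applied under the posterior yields
\begin{align*}
\Pi_{BS;n}(|M - \hat{m}_n| > \epsilon/2) \leq \frac{4\, \mathbb{V}_{BS;n}[M]}{\epsilon^2}.
\end{align*}
By \thref{lemma6}, the right-hand side tends to $0$ for $G_0^\infty$-almost all sequences $X_1^\infty$, which together with the preceding inclusion finishes the proof. I do not expect a genuine obstacle here: the only subtlety is that the $G_0^\infty$-null set outside of which both convergences hold must be the intersection (still null) of the two exceptional sets from \thref{lemma5} and \thref{lemma6}, and that both the existence of the posterior mean and the posterior variance are guaranteed by the moment conditions inherited from those lemmas (which in turn invoke the results on random-mean moments from \citet{epifani2003exponential}).
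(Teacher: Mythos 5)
Your proposal is correct and is exactly the argument the paper intends: its proof of \thref{theorem7} simply invokes "the two previous lemmas and the Markov inequality," which, when written out, is the triangle-inequality split around the posterior mean from \thref{lemma5} followed by the Chebyshev/Markov bound using the vanishing posterior variance from \thref{lemma6}. Your version just makes the set inclusion and the null-set bookkeeping explicit.
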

\begin{proof}
This is a direct consequence of the two previous lemmas and the Markov inequality.
\end{proof}

\begin{remark}
A brief discussion of the assumptions of above lemmas and theorem, respectively, is given. Assumption $(1.)$ of Lemma \ref{lemma5} is an artifact of survival analysis and ensures that the prior $(\alpha, \beta)$, that is to be chosen, actually leads to a cumulative hazard function.
Assumption (2.) of the first lemma and the additional assumption of the second lemma, respectively, ensures the existence of the first, resp. second, moment of the posterior distribution of the random mean. These conditions are given in \citet{epifani2003exponential} Proposition 4. 

Moreover, this work extends results which concern the existence of certain random functionals w.r.t. to a random c.d.f. drawn according to a NTR prior. Those results can be understood as a generalization of a work by \citet{feigin1989linear} which gives conditions under which certain functionals of a random measure drawn according to a Dirichlet prior exist in terms of the base measure (the prior parameter). At this place it should be emphasized that they investigated this problem by creating a new approach to the Dirichlet process which is often not mentioned in the literature. To be more precise, they show that the Dirichlet process can be extracted as the invariant distribution of a measure-valued Markov chain and exploit this theory to show sufficiency of the conditions of their existence theorem.
Since Dirichlet priors are as well included in the family of NTR priors the extension by Epifani et al. seems natural.
 
 The last assumption, i.e. that $\beta(s)\geq 1$ ensures posterior consistency of the random mean of the c.d.f. governed by the beta-Stacy prior and is used in the proofs of both lemmas. Hence, a prior for the random c.d.f. $G$ is to be chosen, in terms of its parameter, such that the sequence of posteriors of the $G$-mean is consistent as long as this property is desired. It is interesting to see how advanced information on the posterior consistency of the random mean influences the prior knowledge of the random c.d.f. itself. To put it another way, information concerning the posterior consistency of the random mean is indeed prior information on $G$. However, the other way around is not true in general, i.e. posterior consistency of the random c.d.f. does not generally imply posterior consistency of the random mean. That is because this functional is not continuous. However, posterior consistency of $G$ implies that of its truncated mean. That means integration is restricted to a compact set and would lead to a weaker form of consistency in the sense of compact convergence.
\end{remark}

As an immediate consequence of the previous results one obtains the consistency of the traffic intensity of the $M/G/1$ queue.

\begin{corollary}\thlabel{cor8}
Let $\mu:=\int s G(ds)$, $\mu_0:=\int sG_0(ds)$ be the random and true mean of the service-time distribution and $\rho_0=\lambda_0\mu_0$ the true traffic intensity. Further, let $\Pi_{BS\otimes\Gamma}=\Pi_{BS}\bigotimes\Pi_{\Gamma}$ be the prior on $(\cP(\R_+),\R_+)$ that is formed by taking the product-measure of $\Pi_{BS}$ and $\Pi_{\Gamma}$, $\Pi_{BS\otimes \Gamma;n}$ the posterior, respectively, and
 define $\hat{\lambda}_n:=\E_{\lambda;n}[\lambda]:=\E_{\Gamma}[\lambda|A_1^n]$, $\hat{\mu}_n:=\E_{BS;n}\left[\int_0^{\infty} t dG(t)\right]:=\E_{BS}\left[\int_0^{\infty} t dG(t) | S_1^n\right]$ and $\hat{\rho}_n:=\E_{BS\otimes\Gamma;n}[\rho]:=\E_{BS\otimes\Gamma}[\rho|A_1^n,S_1^n]$.\\ Then, under the assumptions of \thref{prop1} and \thref{theorem7}, one has
\begin{enumerate}[(i)]
\item for all $\epsilon>0$
\begin{align*}
\Pi_{BS\otimes\Gamma}(|\rho-\rho_0|\geq \epsilon |S_1^n,A_1^n)\stackrel{n\rightarrow\infty}{\longrightarrow}0,
\end{align*}
for $ P_{\lambda_0}^{\infty} \otimes G_0^{\infty}$-almost all data sequences $(A_1^{\infty},S_1^{\infty})$,
\item
\begin{align*}
|\hat{\rho}_n-\rho_0|\stackrel{n\rightarrow\infty}{\longrightarrow}0,
\end{align*}
for $ P_{\lambda_0}^{\infty} \otimes G_0^{\infty}$-almost all data sequences $(A_1^{\infty},S_1^{\infty})$.
\end{enumerate}
\end{corollary}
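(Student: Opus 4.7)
My overall plan is to exploit the product structure of the joint prior $\Pi_{BS\otimes\Gamma}=\Pi_{BS}\otimes\Pi_{\Gamma}$ together with the independence of the data streams $A_1^\infty$ and $S_1^\infty$. Under these assumptions the posterior factorises as $\Pi_{BS\otimes\Gamma;n}=\Pi_{BS;n}\otimes\Pi_{\Gamma;n}$, so that statements about the product variable $\rho=\lambda\mu$ can be reduced to the already-established consistency statements for $\lambda$ (\thref{prop1}) and for the random mean $\mu=\int t\,G(dt)$ (\thref{theorem7}). The argument is essentially a continuous-mapping-type argument at the posterior level, plus a separate continuous-mapping argument at the level of Bayes estimates.

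For part (i), I would start from the elementary inequality
\begin{align*}
|\lambda\mu-\lambda_0\mu_0|\leq |\lambda-\lambda_0|\,\mu+\lambda_0\,|\mu-\mu_0|,
\end{align*}
and observe that if $|\lambda-\lambda_0|<\delta$ and $|\mu-\mu_0|<\delta$ for sufficiently small $\delta=\delta(\epsilon,\lambda_0,\mu_0)$, then $|\rho-\rho_0|<\epsilon$. Hence
\begin{align*}
\{|\rho-\rho_0|\geq\epsilon\}\subseteq \{|\lambda-\lambda_0|\geq\delta\}\cup\{|\mu-\mu_0|\geq\delta\},
\end{align*}
and by the product structure of the posterior together with a union bound,
\begin{align*}
\Pi_{BS\otimes\Gamma;n}(|\rho-\rho_0|\geq\epsilon\mid A_1^n,S_1^n)\leq \Pi_{\Gamma;n}(|\lambda-\lambda_0|\geq\delta\mid A_1^n)+\Pi_{BS;n}(|\mu-\mu_0|\geq\delta\mid S_1^n).
\end{align*}
The first term vanishes $P_{\lambda_0}^\infty$-a.s.~by \thref{prop1} and the second vanishes $G_0^\infty$-a.s.~by \thref{theorem7}. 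Taking the product null set gives the claim for $P_{\lambda_0}^\infty\otimes G_0^\infty$-almost every $(A_1^\infty,S_1^\infty)$.

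For part (ii), the same independence yields the factorisation
\begin{align*}
\hat{\rho}_n=\E_{BS\otimes\Gamma}[\lambda\mu\mid A_1^n,S_1^n]=\E_{\Gamma}[\lambda\mid A_1^n]\,\E_{BS}[\mu\mid S_1^n]=\hat{\lambda}_n\hat{\mu}_n.
\end{align*}
The sequence $\hat{\lambda}_n=(a+n)/(b+\sum_{i=1}^n A_i)$ converges to $\lambda_0$ by the strong law of large numbers (this was the core of the proof of \thref{prop1}), and $\hat{\mu}_n\to \mu_0$ was established in \thref{lemma5}. In particular $\hat{\mu}_n$ is a.s.~bounded for large $n$, so the inequality
\begin{align*}
|\hat{\rho}_n-\rho_0|\leq |\hat{\lambda}_n-\lambda_0|\,\hat{\mu}_n+\lambda_0\,|\hat{\mu}_n-\mu_0|
\end{align*}
immediately gives $|\hat{\rho}_n-\rho_0|\to 0$ on the intersection of the two full-measure sets.

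There is no serious obstacle: both non-trivial ingredients (consistency of $\lambda$ and of the random mean $\mu$) are already in hand, and the only genuine content is that the product-independence of priors and of data is preserved by conditioning on $(A_1^n,S_1^n)$, so that the joint posterior factorises and the Bayes estimate of $\rho$ equals $\hat{\lambda}_n\hat{\mu}_n$. The mild technical point is to record that the exceptional null sets for the two ingredients can be combined into a $P_{\lambda_0}^\infty\otimes G_0^\infty$-null set, which is immediate from Fubini.
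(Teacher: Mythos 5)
Your proof is correct and follows essentially the same route as the paper: part (i) reduces $\{|\rho-\rho_0|\geq\epsilon\}$ via the triangle inequality and the product structure of the posterior to the marginal consistency statements of \thref{prop1} and \thref{theorem7} (the paper just organizes the bound by intersecting with the event $\{|\mu-\mu_0|<\delta\}$ rather than your cleaner contrapositive union bound), and part (ii) uses the factorization $\hat{\rho}_n=\hat{\lambda}_n\hat{\mu}_n$ together with a.s.\ convergence of each factor and a.s.\ boundedness of $\hat{\mu}_n$, exactly as in the paper. No gaps.
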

\begin{proof}
(i) Recalling the posterior consistency of the arrival rate $\lambda$ in \thref{prop1} and the service rate $\mu$ in \thref{theorem7} one has
\begin{align*}
&\Pi_{BS\otimes\Gamma;n} (|\rho-\rho_0|\geq \epsilon ) =\Pi_{BS\otimes\Gamma;n}(|\mu\lambda-\mu \lambda_0+\mu\lambda_0-\lambda_0\mu_0|\geq \epsilon)\\
&\leq \Pi_{BS\otimes\Gamma;n}(|\mu\lambda-\mu \lambda_0|+|\mu\lambda_0-\lambda_0\mu_0|\geq \epsilon )\\
&\leq \Pi_{BS\otimes\Gamma;n}\left(| \mu\lambda-\mu \lambda_0|\geq \epsilon/2 \right)+  \Pi_{BS\otimes\Gamma;n}\left(|\mu-\mu_0|\geq \frac{\epsilon}{2\lambda_0}\right)\\
&\leq  \Pi_{BS\otimes\Gamma;n}\left((\mu_0+\delta)|\lambda- \lambda_0|\geq \epsilon/2, |\mu-\mu_0|<\delta \right) \\
&\qquad +\Pi_{BS;n}\left( |\mu-\mu_0| \geq \delta \right)+ \Pi_{BS;n}\left(|\mu-\mu_0|\geq \frac{\epsilon}{2 \lambda_0}\right)\\
&\leq \Pi_{\Gamma;n}\left(|\lambda-\lambda_0|\geq \frac{\epsilon}{2 (\mu_0+\delta)}\right)+\Pi_{BS;n}\left( |\mu-\mu_0| \geq \delta \right)+\Pi_{BS;n}\left(|\mu-\mu_0|\geq \frac{\epsilon}{2\lambda_0} \right),
\end{align*}
from which the assertion of (i) follows.\\

(ii) Straightforward one has a.s.
\begin{align*}
|\hat{\rho}_n-\rho_0|&\leq |\hat{\mu}_n\lambda_0-\hat{\mu}_n\hat{\lambda}_n|+\lambda_0|\hat{\mu}_n-\mu_0|\\
& \leq(\mu_0+\gamma)|\lambda_0-\hat{\lambda}_n|+\lambda_0|\hat{\mu}_n-\mu_0|
\end{align*}
by selecting $\gamma>0$ such that a.s. $|\hat{\mu}_n-\mu_0|<\gamma$, for all $n$ sufficiently large.  The assertion of (ii) then follows from the proof of \thref{prop1} and \thref{lemma5} completing the proof.
\end{proof}

We are now in a position to state the main theorem of this section, i.e. the consistency of the estimators for the waiting time LST, the queue length p.g.f. and the system size p.g.f.,  defined in section 3.3.

\begin{theorem}
Under the assumptions of \thref{prop1}, \thref{prop3} and \thref{theorem7} one has
\begin{enumerate}[(i)]
\item for all $\epsilon>0$
\begin{align*}
\Pi_{BS\otimes\Gamma}\left(\sup\limits_{0\leq z <\infty} |f(z)-f_0(z)|\geq\epsilon |S_1^n,A_1^n\right)\stackrel{n\rightarrow\infty}{\longrightarrow}0.
\end{align*}

\item
\begin{align*}
\sup\limits_{0\leq z <\infty} |f_n^*(z)-f_0(z)|\stackrel{n\rightarrow\infty}{\longrightarrow}0,
\end{align*}
for $ P_{\lambda_0}^{\infty} \otimes G_0^{\infty}$-almost all data sequences $(A_1^{\infty},S_1^{\infty})$, where $f(\cdot)\in\{n(\cdot),q(\cdot),w(\cdot)\}$.
\end{enumerate}
\end{theorem}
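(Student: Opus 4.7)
The overall strategy is to reduce both assertions to a single deterministic continuity statement for the map $(\lambda,G,\mu)\mapsto f$ endowed with the sup-norm on the output, and then to plug in (for (ii)) or to transfer to the posterior via a union bound (for (i)). A decisive first step is to rewrite the three functionals in a form that removes the apparent $0/0$ at the boundary of the domain. With $1-g(z)=z\,h(z)$ where $h(z):=\int_0^{\infty}e^{-zt}(1-G(t))\,dt$ extends continuously to $z=0$ with $h(0)=\mu$, one obtains
\begin{align*}
w(z)=\frac{1-\rho}{1-\lambda h(z)},\qquad q(z)=\frac{1-\rho}{1-\lambda h(\lambda(1-z))},\qquad n(z)=q(z)\,g(\lambda(1-z)).
\end{align*}
Since $\lambda h(z)\leq\lambda h(0)=\rho$, the denominators are uniformly bounded below by $1-\rho_0>0$ at the truth, and by $(1-\rho_0)/2$ for the plug-in estimator once $\hat{\rho}_n<(1+\rho_0)/2$, which holds almost surely for $n$ large by \thref{cor8}; the same bound applies under the posterior with probability tending to $1$ by \thref{cor8}(i).

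The key auxiliary step is uniform convergence of the Laplace transform of the survival function: if $\hat{G}_n\to G_0$ in sup-norm and $\hat{\mu}_n\to\mu_0$, then $\sup_{z\geq 0}|h_n^*(z)-h_0(z)|\to 0$. I would prove this by splitting $\int_0^{\infty}e^{-zt}[G_0(t)-\hat{G}_n(t)]\,dt$ at a cut-off $T$. On $[0,T]$ the integrand is dominated by $\sup_t|\hat{G}_n-G_0|$, giving a bound of order $T\cdot\sup_t|\hat{G}_n(t)-G_0(t)|$, which is small for $n$ large. On $[T,\infty)$ the elementary bound $|\hat{G}_n-G_0|\leq(1-G_0)+(1-\hat{G}_n)$ yields a bound by $\int_T^{\infty}(1-G_0)\,dt+\int_T^{\infty}(1-\hat{G}_n)\,dt$; the first tail is arbitrarily small by choosing $T$ large since $\mu_0<\infty$, and the second tail converges to the first because $\hat{\mu}_n\to\mu_0$ while $\int_0^T(1-\hat{G}_n)\,dt\to\int_0^T(1-G_0)\,dt$ by uniform convergence on the compact interval $[0,T]$. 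An $\epsilon/3$ decomposition finishes the claim.

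With uniform convergence of $h_n^*$ in hand, (ii) follows by a routine triangle inequality: the numerator of $w_n^*(z)-w_0(z)$ decomposes as $(\rho_0-\hat{\rho}_n)(1-\lambda_0 h_0(z))+(1-\rho_0)\hat{\lambda}_n(h_n^*(z)-h_0(z))+(1-\rho_0)(\hat{\lambda}_n-\lambda_0)h_0(z)$, each factor uniformly controlled by \thref{prop1}, \thref{cor8} and the auxiliary lemma, while the denominator is bounded below by $(1-\rho_0)^2/2$. For $q_n^*$ and $n_n^*$ the additional ingredient is uniform continuity of $g_0$ and $h_0$ on the compactification $[0,\infty]$ (both extend continuously with limit $0$ at infinity), so that $g_n^*(\hat{\lambda}_n(1-z))\to g_0(\lambda_0(1-z))$ and $h_n^*(\hat{\lambda}_n(1-z))\to h_0(\lambda_0(1-z))$ uniformly on $[0,1]$, combining \thref{lemma4} and \thref{prop1}. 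For (i), the very same deterministic continuity statement guarantees that for every $\epsilon>0$ there exists $\delta>0$ with $\{|\lambda-\lambda_0|<\delta,\ \sup_t|G(t)-G_0(t)|<\delta,\ |\mu-\mu_0|<\delta,\ \rho<(1+\rho_0)/2\}\subseteq\{\sup_z|f(z)-f_0(z)|<\epsilon\}$. A union bound then dominates the posterior mass of the complementary event by four terms that vanish almost surely by \thref{prop1}, \thref{deytheorem}, \thref{theorem7} and \thref{cor8}(i) respectively.

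The main obstacle I anticipate is exactly the uniform control of $h_n^*$ near $z=0$: the naive bound $|h_n^*(z)-h_0(z)|\leq\sup_t|\hat{G}_n-G_0|/z$ diverges as $z\downarrow 0$, so sup-norm convergence of the distribution function alone does not control its Laplace transform uniformly on $\R_+$. This is precisely why posterior consistency of the mean functional (\thref{theorem7}) was developed as a separate, non-trivial preparatory step: it supplies the missing tail control via $\hat{\mu}_n\to\mu_0$ and unlocks the uniform convergence of $h_n^*$ described above. Once that lemma is secured, the remainder is a disciplined combination of uniformly continuous maps, uniform lower bounds on denominators, and the previously established almost sure and posterior consistencies.
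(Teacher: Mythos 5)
Your proposal is correct, and at the top level it follows the same route the paper intends: establish a continuity statement for the map $(\lambda,G,\mu)\mapsto f$ in the relevant sup-norm topologies and then combine it with the consistency results of the section (for (ii) by plugging in the Bayes estimates, for (i) by transferring to the posterior). The difference is that the paper's proof stops at asserting this continuity and invoking the continuous mapping theorem, whereas you actually prove the nontrivial part: the reformulation $1-g(z)=z\,h(z)$ with $h(z)=\int_0^\infty e^{-zt}(1-G(t))\,dt$, $h(0)=\mu$, which removes the apparent $0/0$ at $z=0$ and exposes the uniform lower bound $1-\lambda h(z)\geq 1-\rho$ on the denominators; the truncation argument showing that sup-norm closeness of the c.d.f.'s \emph{together with} closeness of the means yields $\sup_{z\geq0}|h-h_0|$ small (this is exactly where \thref{theorem7} enters, and your remark that c.d.f.\ sup-norm consistency alone cannot control the Laplace transform uniformly near $z=0$ correctly explains why the mean appears as a separate argument of the map); the explicit numerator decomposition for (ii); and the event inclusion plus union bound (including the event $\rho<(1+\rho_0)/2$ from \thref{cor8}(i), which also takes care of well-definedness of $w$ under the posterior) for (i). Two minor points: for $f\in\{q,n\}$ the supremum should be read over $z\in[0,1]$ (the paper's statement is loose here), and there your argument correctly supplements \thref{lemma4} and \thref{prop3} with uniform continuity of $g_0$ and $h_0$ to absorb the random/estimated argument $\lambda(1-z)$ versus $\lambda_0(1-z)$. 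In short, your write-up is a valid and essentially complete version of what the paper leaves as a one-line appeal to continuity.
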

\begin{proof}

We show the result for $f=w$, that is in the case of the p.g.f. of the waiting-time distribution. The other cases are treated similarly and we omit the details. We intent to apply the continuous mapping theorem. Therefor note that the mapping $(g(\cdot), \lambda, \mu)\mapsto w(\cdot)$ is continuous with respect to the suitable topologies induced by the sup-norm. Hence, the assertion of the theorem is implied by the results of the present section.
\end{proof}

We shall stress that above theorem is important, especially for applications, since it states that one can reduce the difficult task of making Bayesian inference for queueing characteristics to that of making inference for the observables separately in order to compose it subsequently to that of the objects of interest.
Qualitatively this will lead to reasonable asymptotic inference.
This is appreciable because it is a non-feasible task to place a prior on the distributions of aforementioned characteristics in a way that it can be updated with data given by observations that we have access to.

\section{Posterior Normality}
Asymptotic normality results, so called Bernstein-von Mises theorems, for the posterior law serve as an additional validation of Bayesian procedures. Especially in situations where the exact posterior law is not available or hard to compute they are useful from a rather applied viewpoint to get an approximation of the posterior law. While the previous section has shown that the posterior concentrates around the true value when data increases, normality results give an idea how, asymptotically speaking, it does concentrate and how fluctuations around this centering appear. Often it is true that the posterior law, if centered and rescaled appropriately, resembles a centered Gaussian distribution with a certain covariance structure. For applications, special interest lies in this limiting covariance structure.\\

 The earliest result dates back to \citet{de1774memoire} who approximates the posterior of a beta distribution by a normal integral. Sergei Bernstein and Richard von Mises gave a rather modern version of this approach by applying a general result about infinite products of functions to the sequence of posterior densities, see e.g. \citet{johnson1967asymptotic} for a review and references. These approaches of expanding posterior densities coined the name. Nowadays, the parametric case is well understood and results including centering with the MLE or the Bayes estimate as well can be found in \citet{schervish1995theory} or \citet{ghosh2003bayesnonp}. However, the results needed for an asymptotic behavior of the posterior of infinite-dimensional parameters, i.e. in the nonparametric case, go deeper and deserve separate investigation. Posterior normality results involving Dirichlet process priors or NTR processes can be found in \citet{conti1999larges} who employs a result by \citet{freedman1963asymptotic} on asymptotic normality in the finite-dimensional case.\\

 We use a general result by \citet{kim2004bernst} on the asymptotic normality of NTR processes  to obtain the asymptotic behavior of the posterior law of the waiting time LST. This result is stated next. For the sake of clarity, it is stated for the non-censored case. However, it can be enlarged to the situation where data is right-censored. For a positive real number $\tau$ let $(D[0,\tau],\left\|\cdot\right\|_{\tau})$ denote the space of all cadlag functions on $[0,\tau]$ equipped with the sup-norm and $\cL[X]$ the law of a random object $X$.

\begin{theorem}[\citet{kim2004bernst}]\thlabel{kimlee}
Suppose that $G(\cdot)=1-\exp[A(\cdot)]$ is a random c.d.f. drawn according to a NTR prior $\Pi$ with corresponding increasing additive process $A(\cdot)$ whose Lévy measure $L$ is given by
\begin{align*}
L([0,t],B)=\int\limits_0^t \int\limits_{B} \frac{g_s(x)}{x}dx \lambda(s)ds,
\end{align*}
where $\int_0^1 g_t(x)dx=1$ for all $t\in\R_+$. Let the true c.d.f. $G_0$ be continuous and such that $G_0(t)<1$ for all $t\in\R_+$.
Moreover let the following conditions be fulfilled for a $\tau >0,$
\begin{itemize}
\item $\sup\limits_{t\in[0,\tau],x\in[0,1]} (1-x)g_t(x) <\infty$,
\item there is a function $q(t):\R_+ \rightarrow \R$ such that $0<\inf\limits_{t\in [0, \tau]} q(t) < \sup\limits_{t\in [0, \tau]}q(t) <\infty$ and, for some $\alpha>1/2$ and $\epsilon>0$,
\begin{align*}
\sup\limits_{t\in[0,\tau],x\in[0,\epsilon]}\left|\frac{g_t(x)-q(t)}{x^{\alpha}}\right|<\infty,
\end{align*}
\item $\lambda(t)$ is bounded and positive on $(0,\infty)$.
\end{itemize}
Then it holds that for $G_0^{\infty}$-almost all sequences of data $S_1^{\infty}$
\begin{align*}
\lim\limits_{n\rightarrow\infty}\Pi\left(\sqrt{n}[G(\cdot)- \E_{\Pi;n}[G(\cdot)]] \big\vert S_1^n \right) =\cL[\mathcal{A}(\cdot)]
\end{align*}
weakly on $(D[0,\tau],\left\|\cdot\right\|_{\tau})$, where $\mathcal{A}(\cdot)$ denotes a centered Gaussian process given by
$\mathcal{A}(t) = (G_0(t)-1) W(A_0(t))$ with a Brownian motion $W.$
The covariance structure then is  $h(u,v):=\textsf{Cov}[\mathcal{A}(u),\mathcal{A}(v)]=(1-G_0(u))(1-G_0(v))min(A_0(u),A_0(v))$.
\end{theorem}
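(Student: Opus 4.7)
The plan is to prove the Bernstein--von Mises statement in two stages: first establish a functional CLT for the posterior of the additive process $A(\cdot)$ on $D[0,\tau]$, then transfer the result to $G=1-e^{-A}$ by the functional delta method. The NTR conjugacy makes the first stage tractable: under the posterior, $A$ remains an increasing additive process whose Lévy measure together with its fixed discontinuities at the observations $S_1,\dots,S_n$ can be written down in closed form by Hjort/Walker-type update rules applied to the driving kernel $\tfrac{g_s(x)}{x}\lambda(s)\,ds\,dx$.

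The heart of the argument is identifying the finite-dimensional limits. Fixing $t\in[0,\tau]$, I would compute the cumulants of $\sqrt{n}\bigl(A(t)-\E_{\Pi;n}[A(t)]\bigr)$ from the posterior Lévy--Khintchine representation. The small-jump behavior of the continuous part is controlled by the Hölder-type hypothesis $|g_t(x)-q(t)|\le Cx^{\alpha}$ with $\alpha>1/2$, which furnishes a Lindeberg condition; meanwhile $\sup_{t,x}(1-x)g_t(x)<\infty$ controls the large jumps. The $n$ fixed discontinuities contribute independent beta-type jumps which, once centered and summed over $\{S_i\le t\}$, yield a further Gaussian piece via a classical Lindeberg CLT applied along the empirical measure $N_n$. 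A direct cumulant calculation then shows that the posterior variance converges $G_0^{\infty}$-almost surely to the cumulative hazard $A_0(t)=-\log(1-G_0(t))$, and by independence of increments the covariance at $u<v$ equals $A_0(u)=\min(A_0(u),A_0(v))$, matching the Brownian covariance. Tightness in $D[0,\tau]$ follows from standard moment bounds on increments of independent-increment processes, using the boundedness of $\lambda$ and of $(1-x)g_t(x)$ on $[0,\tau]\times[0,1]$ to secure a Billingsley-type criterion uniformly in $t$.

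Finally, the map $\Phi:A\mapsto 1-e^{-A}$ is Hadamard-differentiable at the continuous $A_0$, with derivative $h\mapsto(1-G_0)\,h$, so the functional delta method transfers the CLT for $A$ to one for $G$ with limit $(1-G_0(\cdot))\,W(A_0(\cdot))$; this equals $(G_0(\cdot)-1)W(A_0(\cdot))$ in law by the symmetry of $W$, which is precisely the stated process $\mathcal{A}$. The main obstacle I anticipate is the cumulant/CLT analysis for the posterior Lévy measure: one must simultaneously track how the $n$ observations reshape the continuous intensity and introduce the fixed atoms, and ensure that all approximations hold uniformly in $t\in[0,\tau]$ so that finite-dimensional convergence couples cleanly with the tightness bound. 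The Hölder condition with $\alpha>1/2$ is what makes the resulting second-moment error bounds summable, and I expect this to be the sharp ingredient driving the argument.
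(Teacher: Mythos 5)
The paper offers no internal proof of this statement at all: its ``proof'' is a pointer to \citet{kim2004bernst}, so there is nothing in the paper to match your argument against. Your outline does follow the same general route as that cited source --- NTR conjugacy, analysis of the posterior independent-increment process through its L\'evy representation (a continuous part plus fixed jumps at the observations), a CLT for the fixed jumps with the continuous part shown negligible, tightness in $D[0,\tau]$, and finally a delta-method step for $A\mapsto 1-e^{-A}$, whose derivative $h\mapsto (1-G_0)h$ you state correctly. As a strategy this is the right one.

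As a proof, however, it has a genuine gap at exactly the step you call the heart of the argument. You assert that a ``direct cumulant calculation'' shows the posterior variance of $\sqrt{n}\left[A(t)-\E_{\Pi;n}A(t)\right]$ converges $G_0^{\infty}$-a.s.\ to $A_0(t)$, so that the covariance is $\min(A_0(u),A_0(v))$. This calculation is not carried out, and carrying it out does not give that answer: the posterior fixed jump at an observation $S_i$ has conditional mean of order $1/M_n(S_i)$ and variance of order $1/M_n(S_i)^2$, where $M_n(s)$ is the number at risk, so summing over the roughly $nG_0(t)$ observations in $[0,t]$ and rescaling by $n$ yields, a.s., the at-risk-weighted time change $\sigma^2(t)=\int_0^t [1-G_0(s)]^{-1}\,dA_0(s)=\int_0^t [1-G_0(s)]^{-2}\,dG_0(s)$ rather than $A_0(t)=\int_0^t[1-G_0(s)]^{-1}dG_0(s)$; this is also what a Bernstein--von Mises statement must produce, since after the delta method it reproduces the Kaplan--Meier/empirical limit with covariance $G_0(u\wedge v)-G_0(u)G_0(v)$ in the uncensored case, and it is the variance function appearing in Kim and Lee. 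So the one substantive computation your plan rests on is missing, and the claim put in its place is not what the computation yields (it also signals that the displayed $h(u,v)$ should be read with $\sigma^2$ as the time change). In addition, the negligibility of the continuous posterior L\'evy part at the $\sqrt{n}$ scale, the Lindeberg verification for the triangular array of jumps (where $\alpha>1/2$ actually enters), and the uniform-in-$t$ moment bounds needed for tightness are all asserted rather than established; these constitute the technical content of the cited theorem and cannot be waved through.
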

\begin{proof}
See \citet{kim2004bernst} for the proof as well as for a discussion of the constraints of the theorem.
\end{proof}

Above theorem can well be stated as follows. The posterior distribution of the scaled and centered random process looks more and more like a Gaussian process as the sample size increases. The limiting process is centered, i.e. the expectation function is the constant function taking only the value zero.
Next result ensures that above assertions hold for beta-Stacy processes.

\begin{corollary}[\citet{kim2004bernst}]\thlabel{kimleecor}
The assertion of \thref{kimlee} for the random c.d.f. $G$ holds, with $G$ being governed by a beta-Stacy process prior with parameters $(c(\cdot),H(\cdot))$, where $H$ is a continuous distribution function with continuous density $a$ with respect to the Lebesgue measure such that $0<\inf_t c(t)[1-H(t)] <  \sup_t c(t)[1-H(t)]<\infty$, i.e. for $G_0^{\infty}$-almost all sequences of data $S_1^{\infty}$ it holds
\begin{align*}
\lim\limits_{n\rightarrow \infty} \Pi_{BS;n}(\sqrt{n}[G(\cdot)-\hat{G}_n(\cdot)])=\cL[\mathcal{A}(\cdot)]
\end{align*}
weakly on $(D^{(1)}[0,\infty),\left\|\cdot\right\|_{\infty})$ with $\mathcal{A}(\cdot)$ as in \thref{kimlee}, where $D^{(1)}[0,\infty)$ denotes the space of cadlag functions bounded by one on $[0, \infty)$.
\end{corollary}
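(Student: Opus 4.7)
My plan is to obtain this corollary as a direct specialisation of Theorem~\thref{kimlee}, so the task splits into two parts: rewriting the beta-Stacy Lévy measure in the template form required by that theorem, and then verifying its three bullet hypotheses from the stated assumptions on $c(\cdot)$ and $H(\cdot)$. Recall from Section~3 that, for continuous $H$ with continuous density $a$, the Lévy measure of $BS(c,H)$ is
\begin{equation*}
L([0,t], B) \;=\; \int_0^t \int_B \frac{e^{-x c(s)[1-H(s)]}}{1-e^{-x}} \, c(s) a(s) \, dx \, ds .
\end{equation*}
To match the form $\int_0^t \int_B (g_s(x)/x)\, dx\, \lambda(s)\, ds$ with $\int_0^1 g_s(x)\, dx = 1$, I would define
\begin{equation*}
\tilde g_s(x) := \frac{x\, e^{-x c(s)[1-H(s)]}}{1-e^{-x}}, \qquad Z(s) := \int_0^1 \tilde g_s(x)\, dx,
\end{equation*}
and set $g_s(x) = \tilde g_s(x)/Z(s)$, $\lambda(s) = c(s) a(s) Z(s)$. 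The standing hypothesis that $c(s)[1-H(s)]$ is squeezed between two positive constants immediately makes $Z(s)$ itself uniformly bounded above and away from zero.

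Next I would verify the three bullets of Theorem~\thref{kimlee}. The first, $\sup (1-x) g_t(x) < \infty$, is immediate: $g_t(x)$ is bounded at $x=0$ since $x/(1-e^{-x}) \to 1$, and any mild singularity at $x=1$ is neutralised by the factor $(1-x)$. For the second, take $q(t) := 1/Z(t)$; a short Taylor expansion using $x/(1-e^{-x}) = 1 + x/2 + O(x^2)$ and $e^{-x c(t)[1-H(t)]} = 1 - x c(t)[1-H(t)] + O(x^2)$ gives
\begin{equation*}
g_t(x) Z(t) \;=\; 1 + x\bigl(\tfrac{1}{2} - c(t)[1-H(t)]\bigr) + O(x^2)
\end{equation*}
uniformly in $t \in [0,\tau]$, so that $|g_t(x) - q(t)| \leq C x$ and any $\alpha \in (1/2, 1]$ satisfies the required bound. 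The third bullet, boundedness and strict positivity of $\lambda(t) = c(t) a(t) Z(t)$ on $(0,\infty)$, follows from continuity and strict positivity of the density $a$ together with the standing uniform bounds on $c(t)[1-H(t)]$ and $Z(t)$.

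At this point Theorem~\thref{kimlee} delivers weak convergence of $\sqrt{n}[G(\cdot) - \hat G_n(\cdot)]$ to $\cL[\mathcal{A}(\cdot)]$ in $(D[0,\tau], \|\cdot\|_\tau)$ for every finite $\tau > 0$. To upgrade this to weak convergence on $(D^{(1)}[0,\infty), \|\cdot\|_\infty)$ as the corollary asserts, I would combine the finite-horizon statement with the a priori bound $|G(t) - \hat G_n(t)| \leq 1$ and the fact that both $G_0$ and $\hat G_n$ tend to $1$ at infinity (the latter by Corollary~\thref{cor2}), controlling the contribution of $(\tau, \infty)$ in the sup-norm uniformly in $n$, and then couple this tail control with the Gaussian tightness of the limit process $\mathcal{A}$. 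The main obstacle I anticipate is precisely this last tightness-at-infinity step: the Taylor and boundedness arguments handle the three hypotheses of the Kim--Lee theorem cleanly, but passing from every compact $[0,\tau]$ to the whole half-line in the sup-norm requires a careful estimate showing that the tails of the scaled posterior process vanish uniformly in $n$.
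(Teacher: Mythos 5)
Your route is genuinely different from the paper's. The paper does not verify the three conditions of \thref{kimlee} for the beta-Stacy prior at all: it invokes Theorem 5 of \citet{dey2003some}, which identifies a beta-Stacy process with parameters $(c,H)$ with a beta process with parameters $\left(c(\cdot)[1-H(\cdot)],\int_0^{\cdot}dH(s)/(1-H(s))\right)$, and then cites \citet{kim2004bernst}, who had already checked their own hypotheses for beta processes under exactly the stated conditions on $c(\cdot)[1-H(\cdot)]$ and on $a(\cdot)/(1-H(\cdot))$. Your direct verification --- normalizing the beta-Stacy L\'evy density, taking $q(t)=1/Z(t)$, and Taylor-expanding at $x=0$ to get the bound with $\alpha=1$ --- is a more self-contained alternative, and those computations do go through on a compact time interval (note, though, that you quietly assume strict positivity of $a$, which the corollary does not state but which the paper's proof also needs, and that boundedness of $\lambda(t)=c(t)a(t)Z(t)$ on all of $(0,\infty)$ is not automatic from your hypotheses; continuity only gives local boundedness, which is what the compact-interval theorem actually uses).

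Two concrete gaps remain. First, in \citet{kim2004bernst} the conditions are imposed on the L\'evy measure of the cumulative hazard process, whose jumps lie in $[0,1]$ --- this is why the hypotheses only inspect $x\in[0,1]$ and why the factor $(1-x)$ appears --- whereas you plug in the L\'evy measure of $A=-\log(1-G)$, whose jumps live on $(0,\infty)$. Your normalization $Z(s)=\int_0^1\tilde g_s(x)\,dx$ makes $\int_0^1 g_s(x)\,dx=1$ formally, but the mass your L\'evy measure puts on $x>1$ is simply outside the template, so you are applying the theorem to a different process from the one it is stated for; this mismatch is precisely what the paper's detour through the beta-Stacy/beta-process correspondence avoids, and you would need either to pass to the hazard scale as well or to prove the two formulations equivalent. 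Second, the upgrade from $(D[0,\tau],\|\cdot\|_{\tau})$ to $(D^{(1)}[0,\infty),\|\cdot\|_{\infty})$, which you flag but do not carry out, cannot be fixed by the argument you sketch: the a priori bound $|G(t)-\hat G_n(t)|\le 1$ is destroyed by the $\sqrt{n}$ scaling, and posterior consistency of $\hat G_n$ gives no rate in the tail, so genuine uniform-in-$n$ posterior tail estimates for $\sqrt{n}[G-\hat G_n]$ beyond $\tau$ would be required (or a truncation to $[0,M]$ of the kind the paper introduces later for the LST results). To be fair, the paper's own two-line proof is silent on this point as well and simply inherits the compact-interval statement from \citet{kim2004bernst}.
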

\begin{proof}
By Theorem 5 of \citet{dey2003some}, a beta-Stacy process is a transformed beta process. More precisely, a process $\Lambda$ is a beta-Stacy process with parameters $(c(\cdot),H(\cdot))$ if and only if it is a beta process with parameters $\left(c(\cdot)[1-H(\cdot)],\int_0^{\cdot}\frac{dH(s)}{1-H(s)}\right)$. Further on, \citet{kim2004bernst} showed that a beta process fulfills the constraints of the theorem as long as $\frac{a(s)}{1-H(s)}$ is positive and continuous on $\R_+$ and $c(t)[1-H(t)]$ is as required.
\end{proof}

Next, we turn to the asymptotic behavior of the LST $g$ of the service time distribution $G$ and its random mean $\mu:=\int x dG(x)$. Since a plug-in estimator for $g$ is employed, we will use the functional delta method to provide the asymptotic normality result. Recall that in the previous section a posterior consistency result was given for the random mean $\mu$, i.e. it was shown that the posterior law of $\mu$ centers a.s.~around its true value $\mu_0:=\int x G_0(x)$. This suggests the conjecture that a normality result might be present in this case as well. However, even if \citet{regazzini2003distributional} provide results to approximate the density of the random mean of random measures, we were not able to show the density in our case to be approximated by a normal density if the sample size increases. The main reason for this is that it does not seem to be straight-forward to obtain a suitable expansion of this approximated density.

Hence, in the following we make the rather technical assumption that there is prior knowledge of the kind that service times can not exceed a certain sufficiently large threshold $M\in \R_+$. From a practical point of view this is a rather gentle constraint. Let $\F_M:=\F_M(\R_+):=\{F\in\F(\R_+) : F(t)=1, \forall t\geq M\}$ be the space of all c.d.f.'s whose corresponding probability measure has support  $[0,M]$. Since it is well known that the prior guess on the c.d.f. $G$ under a beta-Stacy prior $BS(c,H)$ is given as $\E_{BS;n}[G]=H$, in the following we take $H\in \F_M$ such that $H$ is continuous on $[0,M]$. Recall that $G(\cdot)=1-\exp[-A(\cdot)]$, where $A$ is a non-negative increasing additive process with Lévy measure
\begin{align*}
dN_t(x)=\frac{dx}{1-e^{-x}} \int\limits_0^{t} e^{-xc(s)[1-H(s)]} c(s) H(ds).
\end{align*}
From the theory of increasing additive processes it is well known $[$see e.g. \citet{sato1999levy}$]$ that for all $t>0$ $A(t)=A_t$ is a random variable governed by a infinitely divisible distribution $\phi_t$. Let $\hat{\phi}_t(\xi)$ denote the characteristic function of $\phi$, i.e.
\begin{align*}
\hat{\phi}_t(\xi)=\int \exp[i\xi s] \phi(ds)= \exp\left[-\int_0^{\infty} (1-e^{i\xi x})dN_t(x)\right].
\end{align*}
Furthermore, since $(A_t)$ has independent increments one has $\hat{\phi}_t(\xi)=\hat{\phi}_s(\xi)\hat{\phi}_{s,t}(\xi)$ for all $s<t$, where $\hat{\phi}_{s,t}(\xi)$ denotes the characteristic function of the difference $A_t-A_s$. Now, since $H\in\F_M$ it follows for $t>M,$

\begin{align*}
&\hat{\phi}_{t}(\xi)=\exp\left[-\int_0^{\infty} \left(1-e^{i\xi x}\right)dN_t(x)\right]
=\exp\left[-\int_0^{\infty} \frac{1-e^{i\xi x}}{1-e^{-x}} \int_0^t e^{-xc(s)[1-H(s)]}c(s)  H(ds)  dx \right] \\
& =\exp\left[-\int_0^{\infty} \frac{1-e^{i\xi x}}{1-e^{-x}} \int_0^M e^{-xc(s)[1-H(s)]}c(s)   H(ds) dx \right] \\& \qquad \qquad \cdot \exp\left[-\int_0^{\infty} \frac{1-e^{i\xi x}}{1-e^{-x}} \int_M^t e^{-xc(s)[1-H(s)]}c(s) H(ds) dx  \right]\\
&=\hat{\phi}_M(\xi),
\end{align*}
the process $(A_t)_{t\geq M}$ is a.s. constant. That implies that the corresponding c.d.f. $G$ is constant from $M$ onwards. In order to ensure that it is indeed a distribution function, we set $G(t):=1$, for all $t\geq M$. The support of the truncated prior $\Pi_{BS}^{(M)}$ law will still be all of $\F_M$.

In order to achieve a posterior normality result for the random LST $g$, we show that the mapping $\Phi: D^{(1)}[0,M]\rightarrow C^{(1)}[0,\infty); G\mapsto\int_0^{\infty}e^{-sz}dG(s)$ is Hadamard differentiable, where $D^{(1)}[0,M]$ and $C^{(1)}[0,\infty)$ denote the space of cadlag and continuous functions, bounded by one, respectively. Moreover, since we are solely interested in distribution functions on $\R_+$, w.l.o.g. it is assumed that $D^{(1)}[0,M]$ consists only of functions starting at zero. For a good reference on Hadamard differentiability and applications including the functional delta method see \citet{kosorok2008introduction}.\\

\begin{lemma}
The mapping
\begin{align*}
\Phi: (D^{(1)}[0,M],\left\|.\right\|_{M})&\rightarrow (C^{(1)}[0,\infty),\left\|.\right\|_{\infty})\\
 G&\mapsto \Phi[G](\bullet):=\int_0^{M}e^{-\bullet s}G(ds)
\end{align*}
is Hadamard differentiable.
\end{lemma}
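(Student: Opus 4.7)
The plan is to observe that $\Phi$ is a continuous linear operator between normed spaces, at which point Hadamard differentiability at every $G$ is automatic with derivative equal to the operator itself.

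First, I would rewrite $\Phi[G]$ by integration by parts of the Riemann--Stieltjes integral (using the convention $G(0)=0$ on the domain), obtaining
\[
\Phi[G](z) = e^{-zM}G(M) + z\int_0^M e^{-zs}G(s)\,ds,
\]
which is manifestly linear in $G$ and continuous in $z\in[0,\infty)$ by dominated convergence, so $\Phi$ maps into $C[0,\infty)$. This reformulation also sidesteps any issue about the Riemann--Stieltjes integral being classically defined for cadlag integrators of unbounded variation: one may simply take the right-hand side as the definition of $\Phi[G]$.

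Second, essentially the same estimate as in the proof of \thref{prop3} yields boundedness: for any $h\in D^{(1)}[0,M]$,
\[
|\Phi[h](z)| \le |h(M)|e^{-zM} + \|h\|_M\, z\int_0^M e^{-zs}\,ds \le \|h\|_M\bigl(e^{-zM}+1-e^{-zM}\bigr)=\|h\|_M,
\]
so $\Phi\colon (D^{(1)}[0,M],\|\cdot\|_M)\to(C_b[0,\infty),\|\cdot\|_\infty)$ is a continuous linear operator of norm at most $1$; in particular its image is contained in $C^{(1)}[0,\infty)$.

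Third, for any continuous linear map between normed spaces, Hadamard differentiability holds at every point with derivative equal to the map itself. Explicitly, for $t_n\to 0$ and $h_n\to h$ in $D^{(1)}[0,M]$, linearity gives $\Phi[G+t_nh_n] = \Phi[G]+t_n\Phi[h_n]$, and the boundedness estimate yields
\[
\Bigl\|\tfrac{\Phi[G+t_nh_n]-\Phi[G]}{t_n}-\Phi[h]\Bigr\|_\infty = \|\Phi[h_n-h]\|_\infty \le \|h_n-h\|_M \to 0.
\]
Hence the Hadamard derivative exists and equals $\Phi'_G[h]=\Phi[h]$, independent of $G$. I do not anticipate a real obstacle: the entire content reduces to the observation that $\Phi$ is linear, together with a uniform-in-$z$ bound that has already been carried out on $[0,\infty)$ in the proof of \thref{prop3}; the truncation to $[0,M]$ only makes the same estimate easier.
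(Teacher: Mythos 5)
Your proof is correct and follows essentially the same route as the paper: the identical integration-by-parts estimate, giving $\|\Phi[h]\|_\infty \le \|h\|_M$, does all the work, and your derivative $\Phi'_G[h]=\Phi[h]$ is exactly the paper's $h\mapsto\int_0^M e^{-zs}\,dh(s)$. Your only (harmless) streamlining is to note up front that $\Phi$ is linear, so Hadamard (indeed Fr\'echet) differentiability reduces to continuity of the operator --- a fact the paper's direct verification of the difference quotient uses implicitly.
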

\begin{proof}

Let $t\searrow 0$ and $h_t\in D^{(1)}[0,M]$, such that $h_t\stackrel{t\searrow0}{\longrightarrow}h \in D^{(1)}[0,M]$ w.r.t. the sup-norm. Then, by properties of the Riemann-Stieltjes integral, one has
\begin{align*}
&\left\|\frac{\Phi[G+th_t](z)-\Phi[G](z)}{t}-\int_0^{M}e^{-zs}dh(s)\right\|_{\infty}\\
&=\sup\limits_{0\leq z <\infty} \left| \int_0^{M}[h_t(s)-h(s)] (-z)e^{-zs} ds\right|\\
&\leq \sup\limits_{0\leq z <\infty}  \left| \int_0^{M} \left(\sup\limits_{0\leq x <M}|h_t(x)-h(x)|\right) (-z)e^{-zs} ds\right|\\
& \leq \left\|h_t-h\right\|_{M} \sup\limits_{0\leq z <\infty} \int_0^{M}ze^{-zs} ds\\
&=\left\|h_t-h\right\|_{M} \stackrel{t\searrow 0}{\longrightarrow} 0.
\end{align*}
Define
\begin{align*}
\Phi'_G:D^{(1)}[0,M]&\rightarrow C^{(1)}[0,\infty)\\
h&\mapsto\Phi'_G[h](z)=\int_0^{M}e^{-zs}dh(s).
\end{align*}
Since the Riemann-Stieltjes integral is linear in the integrator, the mapping $\Phi_G'$ is linear. Moreover, if $\sup\limits_{0\leq s <M}\left|F(s)-G(s)\right|<\delta$, it follows
\begin{align*}
\left|\int\limits_0^{M} e^{-zs} dF(s)-\int\limits_0^{M} e^{-zs} dG(s)\right|\leq \left\|F-G\right\|_{M} \sup\limits_{0\leq z<\infty} \int\limits_0^{M} ze^{-zs}ds <\delta,
\end{align*}
thus the continuity of $\Phi'_G.$
Hence the mapping $\Phi$ is Hadamard differentiable with derivative $\Phi'_G.$
\end{proof}

Now, the lemma will be applied in combination with the functional delta method to obtain the posterior normality of the service time LST centered suitably at its respective Bayes estimator. Write $\Pi_{BS;n}^{(M)}$ for the posterior law induced by the $M$-truncated beta-Stacy process.

\begin{corollary}\thlabel{corFasching}
Let $g_n^{*M}(z)=\int_0^{M} e^{-zs} d\E_{BS;n}[G](s)$. Under the assumptions of \thref{kimlee} and \thref{kimleecor}, it holds for $G_0^{\infty}$-almost all sequences $S_1^{\infty}$
\begin{align*}
\lim\limits_{n\rightarrow \infty} \Pi_{BS;n}^{(M)} \left(\sqrt{n}\left[ g(\cdot)-g_n^{*M}(\cdot) \right]\right) = \mathcal{L}[\G(\cdot)]
\end{align*}
on $C^{(1)}([0,\infty),||.||_{\infty} )$,
where $\G(z)$ is a centered Gaussian process with covariance structure $\gamma(\cdot,\cdot)$ given by
\begin{align*}
\gamma(u,v)=\textsf{Cov}\left[\G(u),\G(v)\right]=uv\int_0^{M}\int_0^{M}e^{-(us+vt)}h(u,v)  du dv,
\end{align*}
where $h(\cdot, \cdot)$ is defined in Theorem \ref{kimlee}.
\end{corollary}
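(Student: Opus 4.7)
The plan is to deduce the result from the posterior normality of $G$ in \thref{kimleecor} by pushing it through the Hadamard differentiable Laplace--Stieltjes transform $\Phi$ established in the preceding lemma. Writing $g(z) = \Phi[G](z)$ and $g_n^{*M}(z) = \Phi[\hat{G}_n](z)$ with $\hat{G}_n = \E_{BS;n}[G]$, we have $\sqrt{n}\bigl[g - g_n^{*M}\bigr] = \sqrt{n}\bigl[\Phi(G) - \Phi(\hat{G}_n)\bigr]$. The $M$-truncation of the prior forces the underlying increasing additive process $A(\cdot)$ to be a.s.\ constant beyond $M$, so $\Pi_{BS;n}^{(M)}$ is supported on $\F_M$, and \thref{kimleecor} applied with $\tau = M$ yields, for $G_0^\infty$-almost every $S_1^\infty$, weak convergence of $\mathcal{L}\bigl[\sqrt{n}(G-\hat{G}_n)\mid S_1^n\bigr]$ on $(D^{(1)}[0,M],\|\cdot\|_M)$ to $\mathcal{L}[\mathcal{A}(\cdot)]$.

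Next I would invoke the Bayesian version of the functional delta method. The preceding lemma shows that $\Phi$ is Hadamard differentiable at $G_0$ with continuous linear derivative $\Phi'_{G_0}[h](z) = \int_0^M e^{-zs}\,dh(s)$ defined on all of $D^{(1)}[0,M]$. Combining the extended continuous mapping theorem with the uniform linearization built into Hadamard differentiability, the a.s.\ weak convergence of the conditional laws is preserved under $\Phi$, so that $\mathcal{L}\bigl[\sqrt{n}(g - g_n^{*M})\mid S_1^n\bigr] \longrightarrow \mathcal{L}\bigl[\Phi'_{G_0}[\mathcal{A}](\cdot)\bigr]$ weakly on $(C^{(1)}[0,\infty),\|\cdot\|_\infty)$ for $G_0^\infty$-almost every data sequence. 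Since $\Phi'_{G_0}$ is continuous linear and $\mathcal{A}$ is centered Gaussian, the limit $\G(\cdot) := \Phi'_{G_0}[\mathcal{A}](\cdot)$ is itself a centered Gaussian process.

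Finally, to identify the covariance $\gamma(u,v)$, I would give meaning to $\int_0^M e^{-zs}\,d\mathcal{A}(s)$ by pathwise integration by parts, writing $\G(z) = e^{-zM}\mathcal{A}(M) + z\int_0^M e^{-zs}\mathcal{A}(s)\,ds$; this is valid since $\mathcal{A}(0)=0$ and $\mathcal{A}$ has continuous paths. Expanding $\E[\G(u)\G(v)]$ by bilinearity, swapping expectation with the Riemann integrals via Fubini, and substituting $h(s,t) = \textsf{Cov}[\mathcal{A}(s),\mathcal{A}(t)]$ from \thref{kimlee} produces the stated double-integral form, where the boundary contributions drop out because the $M$-support of the prior forces $G_0(M)=1$ and hence $\mathcal{A}(M)=0$.

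The main obstacle is the rigorous justification of the Bayesian functional delta method: one must show that the linearization remainder $\sqrt{n}\bigl[\Phi(G)-\Phi(\hat{G}_n)-\Phi'_{G_0}(G-\hat{G}_n)\bigr]$ vanishes in posterior probability, $G_0^\infty$-a.s. This is precisely the setting for which Hadamard (rather than merely G\^ateaux) differentiability is designed, and it is carried out by exploiting the a.s.\ sup-norm consistency of $\hat{G}_n$ furnished by \thref{cor2}, together with the characterization of Hadamard differentiability in terms of uniform linearization along convergent sequences. A secondary technical point is transferring weak convergence from $C^{(1)}[0,M]$ to $C^{(1)}[0,\infty)$, which is routine because both $g$ and $g_n^{*M}$ vanish at infinity by dominated convergence.
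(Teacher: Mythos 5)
Your proposal follows essentially the same route as the paper: the posterior functional CLT for $G$ from \thref{kimleecor} is pushed through the Hadamard-differentiable Laplace--Stieltjes transform of the preceding lemma via the functional delta method, the limit $\G=\Phi'[\mathcal{A}]$ is identified as centered Gaussian, and the covariance is computed by integration by parts and Fubini using $h(s,t)$. The additional care you take with the delta-method remainder and with the boundary term at $M$ (which the paper drops silently, and which indeed vanishes in the truncated model since $(1-G_0(s))W(A_0(s))=e^{-A_0(s)}W(A_0(s))\rightarrow 0$ as $s\uparrow M$) only tightens the same argument.
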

\begin{proof}
By the functional delta method applied to the mapping in the previous lemma one has
\begin{align*}
\G(z)=\Phi_G'\left[(G_0(\cdot)-1)W(A_0(\cdot))\right](z)=\int_0^{M}e^{-zs}d\left[(G_0(s)-1)W(A_0(s)\right].
\end{align*}
Using a Riemann sum approximation for above integral, one concludes that the process $\G(\cdot)$ is a Gaussian process. Further, by well-known properties of the Riemann-Stieltjes integral, one gets
\begin{align*}
\G(z)=z\int_{0}^{M} \left(1-G_0(s))W(A_0(s)\right)  e^{-sz}  ds.
\end{align*}
Using Fubini's theorem it is immediately seen that $\E[\G(z)]=0$ for any $z\in\R_+$.
Again using Fubini's theorem,  the covariance structure of $\G(\cdot)$ is obtained as
\begin{align*}
&\textsf{Cov}\left[\G(u),\G(v)\right]=E\left[\G(u)\G(v)\right]\\
&=uv\E\left[\int_0^{M}\int_0^{M}(1-G_0(s))W(A_0(s))  e^{-us}(1-G_0(t))W(A_0(t))  e^{-vt} ds dt\right]\\
&=uv\int_0^{M}\int_0^{M}e^{-(us+vt)}(1-G_0(s))(1-G_0(t)) \E\left[W(A_0(s))W(A_0(t))\right]  ds dt\\
&=uv\int_0^{M}\int_0^{M}e^{-(us+vt)}h(s,t)  ds dt.
\end{align*}
\end{proof}

Next, we investigate the posterior normality of the mean of the random c.d.f. $G$. Since no exact results seem obtainable, we use the plug-in estimator $\mu_n^*:=\int_0^M [1-\E_{BS;n}[G(t)]] dt$ which, in general, does not equal $\E_{BS;n}\left[\int_0^M [1-G(t)] dt\right]$. This estimator in combination with the $M$-truncated c.d.f.'s enables us to use the functional delta method for obtaining normality results.

\begin{lemma}
Let $M$ be an arbitrary positive real number. Then, the mapping
\begin{align*}
\Psi:(\F_M,\left\|\cdot\right\|_{M}) &\rightarrow ([0,M],\left|\cdot\right|)\\
            &G \mapsto \Psi[G]:=\int_0^{M}[1-G(s)] ds
\end{align*}
is Hadamard-differentiable with derivative $\Psi'[h]= - \int_0^M  h(s) ds$.
\end{lemma}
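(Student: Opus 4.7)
The plan is to verify Hadamard differentiability directly from the definition, which in this case reduces to a one-line linearity computation because the integrand in $\Psi$ does not depend on any external parameter (unlike the LST case of the previous lemma). Concretely, I take any $h\in D^{(1)}[0,M]$ and any net $h_t \in D^{(1)}[0,M]$ with $h_t \to h$ in $\|\cdot\|_M$ as $t \searrow 0$, chosen so that $G+th_t \in \F_M$, and form the difference quotient
\begin{align*}
\frac{\Psi[G+th_t]-\Psi[G]}{t} &= \frac{1}{t}\left[\int_0^M [1-G(s)-th_t(s)]\, ds - \int_0^M [1-G(s)]\, ds\right] \\
&= -\int_0^M h_t(s)\, ds.
\end{align*}

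Next I compare with the candidate derivative $\Psi'[h] = -\int_0^M h(s)\, ds$. The difference is
\begin{align*}
\left|\frac{\Psi[G+th_t]-\Psi[G]}{t} - \Psi'[h]\right| = \left|\int_0^M [h(s)-h_t(s)]\, ds\right| \leq M\, \|h_t-h\|_M \stackrel{t\searrow 0}{\longrightarrow} 0,
\end{align*}
which gives Hadamard differentiability at $G$ tangent to $h$.

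Finally, I verify that the proposed derivative $\Psi':D^{(1)}[0,M]\to\R$, $h\mapsto -\int_0^M h(s)\, ds$, is linear and continuous: linearity follows from linearity of the Lebesgue integral, and continuity with respect to $\|\cdot\|_M$ is immediate from $|\Psi'[h]|\leq M\|h\|_M$. There is no real obstacle here; the argument is so simple because the constant integrand $1$ trivializes the role of the "variable of integration over an auxiliary parameter" that complicated the LST lemma. The only point requiring a small amount of care is the tangent-space consideration, namely that $h_t$ must be such that $G+th_t$ remains a valid (bounded by one, nondecreasing, supported in $[0,M]$) object in $\F_M$; but Hadamard differentiability only requires the limit to hold along such admissible approximations, so this is automatic in the setup where the lemma is invoked via the functional delta method.
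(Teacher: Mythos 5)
Your proof is correct and follows essentially the same route as the paper: compute the difference quotient explicitly, bound the deviation from the candidate derivative by $M\|h_t-h\|_M$, and note linearity and continuity of $\Psi'$. Your additional remark on admissible perturbation directions is a harmless (indeed slightly more careful) elaboration of the same argument.
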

\begin{proof}
Take $t\searrow 0$ and $h_t\in \F_M$, such that $h_t\stackrel{t\searrow0}{\longrightarrow}h \in\F_M$. Then
\begin{align*}
&\left|\frac{\Psi[G+th_t]-\Psi[G]}{t}+\int_0^{M} h(s) ds\right|=\left|\int_0^M h(s)-h_t(s) ds \right|\leq M \left\|h-h_t\right\|_M \stackrel{t\searrow 0}{\longrightarrow}0.
\end{align*}
Obviously, the derivative of $\Psi$ is linear and continuous w.r.t. to the considered topologies.
\end{proof}

\begin{corollary}
Under the assumptions of \thref{kimlee} and \thref{kimleecor} it holds for $G_0^{\infty}$ almost all data $S_1^{\infty}$ that
\begin{align*}
\lim\limits_{n\rightarrow\infty} \Pi_{BS;n}^{(M)}\left(\sqrt{n}[\mu-\mu_n^*]\right)=\cL[\mathcal{H}],
\end{align*}
where $\mathcal{H}$ is a centered Gaussian random variable with variance $\eta:=\textsf{Var}[\mathcal{H}]=\int_0^M\int_0^M  h(s,t) dsdt$.
\end{corollary}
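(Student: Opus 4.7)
The plan is to combine the Hadamard differentiability of $\Psi$ established in the preceding lemma with the posterior normality result for $G$ in Corollary \ref{kimleecor} via the functional delta method, specialized to the posterior law. First I would observe that on the truncated space $\F_M$ integration by parts yields
\begin{align*}
\mu=\int_0^M s\, dG(s)=\bigl[sG(s)\bigr]_0^M-\int_0^M G(s)\,ds=\int_0^M [1-G(s)]\,ds=\Psi[G],
\end{align*}
and by the very definition of the plug-in estimator $\mu_n^*=\Psi[\hat{G}_n]$. Hence
\begin{align*}
\sqrt{n}\bigl[\mu-\mu_n^*\bigr]=\sqrt{n}\bigl[\Psi[G]-\Psi[\hat{G}_n]\bigr].
\end{align*}

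Next I would invoke the functional delta method. Corollary \ref{kimleecor} shows that, for $G_0^{\infty}$-almost all data sequences, the posterior law of $\sqrt{n}[G(\cdot)-\hat{G}_n(\cdot)]$ converges weakly on $(D^{(1)}[0,M],\|\cdot\|_M)$ to $\mathcal{A}(\cdot)=(G_0(\cdot)-1)W(A_0(\cdot))$. Combined with the Hadamard differentiability of $\Psi$ at $G_0$ with derivative $\Psi'[h]=-\int_0^M h(s)\,ds$, the functional delta method delivers posterior convergence
\begin{align*}
\Pi_{BS;n}^{(M)}\bigl(\sqrt{n}[\mu-\mu_n^*]\bigr)\;\Longrightarrow\;\mathcal{L}\bigl[\Psi'[\mathcal{A}]\bigr]=\mathcal{L}\!\left[-\int_0^M \mathcal{A}(s)\,ds\right]=\mathcal{L}\!\left[\int_0^M (1-G_0(s))W(A_0(s))\,ds\right].
\end{align*}

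It then remains to identify the limit as a centered Gaussian random variable with the stated variance. Since $\mathcal{H}:=\int_0^M(1-G_0(s))W(A_0(s))\,ds$ is a continuous linear functional of the centered Gaussian process $\mathcal{A}(\cdot)$, it is Gaussian with mean zero. The variance is computed by a straightforward application of Fubini's theorem:
\begin{align*}
\textsf{Var}[\mathcal{H}]
&=\int_0^M\!\int_0^M (1-G_0(s))(1-G_0(t))\,\E\!\left[W(A_0(s))W(A_0(t))\right]ds\,dt\\
&=\int_0^M\!\int_0^M (1-G_0(s))(1-G_0(t))\min(A_0(s),A_0(t))\,ds\,dt=\int_0^M\!\int_0^M h(s,t)\,ds\,dt,
\end{align*}
which is exactly $\eta$, completing the proof.

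The main technical point I expect to require care is the applicability of the functional delta method in the Bayesian posterior setting, i.e. to transfer the weak convergence of the posterior law of $\sqrt{n}[G-\hat{G}_n]$ through $\Psi$; this is legitimate because Hadamard differentiability of $\Psi$ tangentially to $D^{(1)}[0,M]$ preserves weak convergence on the underlying Polish space of sample paths, and the centering $\hat{G}_n$ used in Corollary \ref{kimleecor} matches $\Psi^{-1}(\mu_n^*)$ by the very definition of the plug-in estimator. Verifying Fubini in the variance step is routine since $h$ is bounded on $[0,M]^2$ by $1\wedge A_0(M)<\infty$.
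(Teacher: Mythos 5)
Your proposal is correct and takes essentially the same route as the paper: Hadamard differentiability of $\Psi$ from the preceding lemma, the functional delta method applied to the posterior convergence of $\sqrt{n}[G-\hat{G}_n]$ from \thref{kimleecor}, identification of the limit $\int_0^M(1-G_0(s))W(A_0(s))\,ds$ as centered Gaussian, and Fubini's theorem for the variance $\int_0^M\int_0^M h(s,t)\,ds\,dt$. The only difference is that you spell out the identities $\mu=\Psi[G]$ and $\mu_n^*=\Psi[\hat{G}_n]$ and argue Gaussianity via linearity of the functional, where the paper leaves the former implicit and uses a Riemann sum approximation; these are cosmetic, not substantive, differences.
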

\begin{proof}
By the previous lemma and the functional delta method  the limiting variable is given by
$\Psi'[(G_0(\cdot)-1)W(A_0(\cdot))]=\int_0^M (1-G_0(s)) W(A_0(s)) ds$ which is seen to be centered Gaussian by a Riemann sum approximation in combination with Fubini's theorem.
Moreover, again by Fubini' theorem
\begin{align*}
&\textsf{Var}[\mathcal{H}]=\E\left[\mathcal{H}^2\right]=\E\left[\int_0^M\int_0^M (1-G_0(s))W(A_0(s))(1-G_0(t))W(A_0(t)) dsdt\right]\\
&=\int_0^M\int_0^M h(s,t) dsdt.
\end{align*}
\end{proof}

Next we consider the  asymptotic normality of the arrival-rate when centering with its Bayes estimate.\\

\begin{proposition}
Let $\hat{\lambda}_n:=\E_{\Gamma;n}\left[\lambda \right]$. Then, for $P_{\lambda_0}^{\infty}$-almost all sequences of data $A_1^{\infty}$ one has
\begin{align*}
\lim\limits_{n\rightarrow \infty}\Pi_{\Gamma}\left(\sqrt{n}\left[\lambda-\hat{\lambda}_n\right] \big\vert A_1^n \right)=\mathcal{N},
\end{align*}
where $\mathcal{N}$ is a centered Gaussian random variable with precision $\lambda_0^2$.
\end{proposition}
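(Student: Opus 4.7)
The plan is to exploit the conjugacy of the Gamma prior to write down the posterior exactly, then apply a classical CLT together with Slutsky's theorem; the only data-dependent quantity is a scaling factor which is controlled by the strong law of large numbers.

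First, as already noted in Proposition \ref{prop1}, Bayes' theorem gives $\lambda \mid A_1^n \sim \Gamma(a+n,\beta_n)$ with $\beta_n := b + \sum_{i=1}^{n} A_i$ and $\hat{\lambda}_n = (a+n)/\beta_n$. Setting $U_n := \lambda\beta_n$, conditionally on $A_1^n$ the variable $U_n$ has a $\Gamma(a+n,1)$ distribution (and in particular its law does not depend on the observations). We rewrite
\begin{align*}
\sqrt{n}\bigl(\lambda - \hat{\lambda}_n\bigr) \;=\; \frac{\sqrt{n}}{\beta_n}\bigl(U_n - (a+n)\bigr) \;=\; \frac{\sqrt{n(a+n)}}{\beta_n}\cdot\frac{U_n - (a+n)}{\sqrt{a+n}}.
\end{align*}

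Next, I would argue convergence of the two factors separately. The random variable $Z_n := (U_n - (a+n))/\sqrt{a+n}$ is a standardised Gamma; by a direct characteristic-function computation (or by realising $U_n$ as a sum of i.i.d.~$\mathrm{Exp}(1)$ increments when $a+n\in\N$ and closing up by Lévy's continuity theorem otherwise) one obtains $Z_n \Rightarrow \mathcal{N}(0,1)$ as $n\to\infty$, and this convergence is deterministic, i.e.\ holds for every realisation of the data. On the other hand, by the strong law of large numbers applied to the inter-arrival times under $P_{\lambda_0}^{\infty}$,
\begin{align*}
\frac{\beta_n}{n} \;\stackrel{n\to\infty}{\longrightarrow}\; \E_{\lambda_0}[A_1] \;=\; \frac{1}{\lambda_0} \qquad P_{\lambda_0}^{\infty}\text{-a.s.},
\end{align*}
so $\sqrt{n(a+n)}/\beta_n \to \lambda_0$ almost surely.

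Finally, Slutsky's theorem combines the two factors: conditional on $A_1^n$,
\begin{align*}
\sqrt{n}\bigl(\lambda - \hat{\lambda}_n\bigr) \;\Longrightarrow\; \lambda_0\cdot\mathcal{N}(0,1)
\end{align*}
weakly, for $P_{\lambda_0}^{\infty}$-a.e.\ sequence of inter-arrivals, which is the stated Gaussian limit (with variance $\lambda_0^2$, i.e.\ precision $\lambda_0^{-2}$ in the standard convention; this matches the Fisher-information based BvM for exponential sampling).

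There is no real obstacle here: the argument is essentially the textbook parametric BvM for a one-parameter exponential family, and the only nontrivial ingredient beyond direct algebra is invoking the SLLN to dispose of the random normalisation $\beta_n$. Alternatively, one could bypass Slutsky by comparing $\hat{\lambda}_n$ with the MLE $n/\sum A_i$, noting their difference is $O(1/n)$ under $P_{\lambda_0}^{\infty}$, and quoting the classical parametric Bernstein–von Mises theorem from e.g.\ \citet{schervish1995theory}; this would yield the same limiting Gaussian.
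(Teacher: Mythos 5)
Your argument is correct, and it reaches the same limit as the paper by a genuinely different and more self-contained route. The paper simply invokes the general parametric Bernstein--von Mises theorem (Theorem 1.4.3 in Ghosh and Ramamoorthi, 2003) and identifies the limiting variance as the inverse Fisher information, computing $\mathcal{I}(\lambda_0)=\lambda_0^{-2}$ so that the limiting variance is $\lambda_0^{2}$; the verification of the regularity conditions is left to the reader. You instead exploit conjugacy to work with the exact posterior: writing $\lambda\mid A_1^n\sim\Gamma(a+n,\beta_n)$, factoring $\sqrt{n}(\lambda-\hat{\lambda}_n)$ into a standardised $\Gamma(a+n,1)$ variable (whose conditional law is data-free and tends to $\mathcal{N}(0,1)$) times the scalar $\sqrt{n(a+n)}/\beta_n$, and disposing of the latter by the strong law of large numbers plus Slutsky. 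Your computation is fully elementary, needs no smoothness or prior-positivity conditions, and makes the almost-sure qualification transparent; what it delivers is weak convergence of the posterior laws, whereas the theorem cited in the paper gives the stronger total-variation convergence (your closing remark about comparing $\hat{\lambda}_n$ with the MLE and quoting the classical BvM is essentially the paper's route). Note also that your limit, variance $\lambda_0^{2}$, agrees with the paper's own Fisher-information calculation; the proposition's wording ``precision $\lambda_0^{2}$'' is a terminological slip (it should read variance $\lambda_0^{2}$, i.e.\ precision $\lambda_0^{-2}$), which you correctly flag.
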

\begin{proof}
By Theorem 1.4.3. in \citet{ghosh2003bayesnonp} the convergence of the posterior distribution of $\sqrt{n}\left[\lambda-\hat{\lambda}_n\right]$ to a centered normal distribution directly follows. Moreover, the variance of this limiting Gaussian variable is given by the inverse Fisher information at the true arrival rate. Checking the necessary conditions for interchanging integral and derivative is left to the interested reader. The Fisher information is obtained as
\begin{align*}
&\mathcal{I}(\lambda_0)=\E_{\lambda_0}\left[\left(\frac{\partial}{\partial \lambda} \log\left(\lambda e^{-\lambda A}\right)\right)^2\right]=-\E_{\lambda_0}\left[\frac{\partial^2}{\partial \lambda^2}[\log(\lambda) -\lambda A]\right]_{|_{\lambda=\lambda_0}}=\lambda_0^{-2}.
\end{align*}
\end{proof}

We are now in a position to formulate the posterior normality of the waiting-time LST. However, the same techniques can be applied to show posterior normality of several other queueing characteristics like e.g.~for the queue length p.g.f.~or the sojourn time LST.
 The waiting-time distribution is of special interest since it gives a qualitative idea about the loss of information that can occur in a $M/G/1$ system and thus helps to ensure a well-working system.  However, since the exact posterior law of the waiting time distribution is not obtainable in closed form, asymptotic approximations are given. These results extend results of the previous section where it was shown that the plug-in estimator is reasonable to make inference. Roughly, we prove  that the posterior law of the LST follows, asymptotically speaking, a Gaussian quantity that is centered around the plug-in estimator. Let $\Pi_{BS\otimes \Gamma}=\Pi_{BS}^{(M)}\bigotimes\Pi_{\Gamma}$ denote the prior on the parameter space $\R_+\times \mathcal{F}_M.$

\begin{theorem}
Let $w(z)=\frac{z(1-\rho)}{z-\lambda(1-g(z))}$ be the LST of the waiting time distribution as given in section 2 and $w_n^{*M}(z)=\frac{z(1-\hat{\lambda}_n \mu_n^*)}{z-\hat{\lambda}_n(1-g_n^{*M}(z))}$ be its plug-in estimator. Then, under the assumptions of \thref{kimlee} and \thref{kimleecor}, for $G_0^{\infty}\bigotimes P_{\lambda_0}^{\infty}$-almost all sequences of data $(S,A)_1^{\infty}$ it holds that
\begin{align*}
\lim\limits_{n\rightarrow\infty}\Pi_{BS\otimes\Gamma}\left(\sqrt{n}\left[w(\cdot)-w_n^{*M}(\cdot)\right]|A_1^n, S_1^n \right)=\cL[\mathcal{Z}(\cdot)]
\end{align*}
weakly on $C([0,\infty),||.||_{\infty} )$,
where $\mathcal{Z}(\cdot)$ is a centered Gaussian process with covariance structure $\zeta(u,v)=\textsf{Cov}[\mathcal{Z}(u),\mathcal{Z}(v)]$ given by
\begin{align*}
\begin{split}
\zeta(u,v)&=\frac{w_0(u)w_0(v)}{(1-\rho_0)^2}\left[\lambda_0^2\eta+\frac{\mu_0^2}{\lambda_0^2}\right]
+\lambda_0^{-2}\frac{w_0(u)(1-g_0(u))}{\lambda_0(1-g_0(u))-u}\times\frac{w_0(v)(1-g_0(v))}{\lambda_0(1-g_0(v))-v}\\
&+ \frac{w_0(u)w_0(v) \lambda_0^2}{\left[\lambda_0(1-g_0(u))-u\right]\left[\lambda_0(1-g_0(v))-v\right]}\gamma(u,v)  \\
&- \frac{\mu_0 w_0(u) w_0(v)}{[\lambda_0(1-\rho_0)]^2 }\left[\frac{w_0(u) (1- g_0(u))}{u}+\frac{w_0(v) (1- g_0(v))}{v}\right]\\
&- \frac{\lambda_0w_0(u)w_0(v)}{(1-\rho_0)^2} \Bigg[ w_0(u)\int\limits_{[0,M]^2} e^{-us}h(s,t)d(s,t)
+w_0(v)\int\limits_{[0,M]^2} e^{-tv}h(s,t)d(s,t) \Bigg],
\end{split}
\end{align*}
where $\gamma(\cdot, \cdot)$ is given in Corollary  \ref{corFasching}.
\end{theorem}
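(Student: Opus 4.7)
The plan is to exhibit $w$ as a smooth functional $\Xi$ of the triple $(\lambda,\mu,g)$ and then combine the three previously proved Bernstein--von Mises statements via the functional delta method. Concretely, define
\[
\Xi\colon \R_+\times[0,M]\times C^{(1)}[0,\infty)\longrightarrow C([0,\infty)),\qquad
\Xi[\lambda,\mu,g](z)=\frac{z(1-\lambda\mu)}{z-\lambda(1-g(z))},
\]
so that $w=\Xi[\lambda,\mu,g]$ and $w_n^{*M}=\Xi[\hat\lambda_n,\mu_n^*,g_n^{*M}]$. The argument then has three stages: (i)~joint posterior weak convergence of $\sqrt{n}\bigl((\lambda,\mu,g)-(\hat\lambda_n,\mu_n^*,g_n^{*M})\bigr)$ to a Gaussian triple $(\mathcal N,\mathcal H,\mathcal G)$; (ii)~Hadamard differentiability of $\Xi$ at $(\lambda_0,\mu_0,g_0)$; (iii)~the functional delta method, followed by the covariance computation.

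For~(i), I first exploit the fact that $\Pi_{BS\otimes\Gamma}=\Pi_{BS}^{(M)}\otimes\Pi_{\Gamma}$ and that the likelihood factors over the independent inter-arrival and service samples, so the posterior also factors. Consequently $\sqrt{n}(\lambda-\hat\lambda_n)$ is posterior-independent of the pair $\bigl(\sqrt{n}(\mu-\mu_n^*),\sqrt{n}(g-g_n^{*M})\bigr)$, and its marginal converges to $\mathcal N$ by the preceding proposition on arrival-rate posterior normality. For the remaining pair, both $\mu_n^*$ and $g_n^{*M}$ arise as the images $\Psi[\hat G_n]$ and $\Phi[\hat G_n]$ of the same Bayes estimate $\hat G_n$ under the Hadamard-differentiable maps $\Psi$ and $\Phi$ of the preceding two lemmas; therefore applying the functional delta method to the product map $G\mapsto(\Psi[G],\Phi[G])$ on the back of \thref{kimleecor} yields joint convergence to $(\mathcal H,\mathcal G)=(\Psi'[A^\star],\Phi'[A^\star])$, where $A^\star(\cdot)=(G_0(\cdot)-1)W(A_0(\cdot))$ is the limiting Gaussian process supplied by \thref{kimleecor}. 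The cross-covariance $\textsf{Cov}[\mathcal H,\mathcal G(z)]$ is read off directly from these integral representations by Fubini.

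For~(ii), $\Xi$ is a rational expression in its arguments, and on a neighbourhood of $(\lambda_0,\mu_0,g_0)$ on which $z-\lambda(1-g(z))$ stays uniformly bounded away from zero on $[0,\infty)$ -- which is guaranteed by the stability assumption $\rho_0<1$ together with the $M$-truncation -- a direct first-order expansion of the difference quotient yields a continuous linear limit
\[
\Xi'_{(\lambda_0,\mu_0,g_0)}[n,h,\gamma](z)=\alpha(z)\,n+\beta(z)\,h+\delta(z)\,\gamma(z),
\]
with $\alpha,\beta,\delta$ the three partial derivatives read off from the quotient rule, all of them uniformly bounded on $[0,\infty)$. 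This is Hadamard differentiability tangentially to the appropriate subspace of $C^{(1)}[0,\infty)$ under the sup-norm.

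Combining (i) and (ii) via the functional delta method then yields $\sqrt{n}(w-w_n^{*M})\Rightarrow \Xi'[\mathcal N,\mathcal H,\mathcal G]=:\mathcal Z$ in the posterior, for $P_{\lambda_0}^{\infty}\otimes G_0^{\infty}$-almost all data sequences. Being the continuous linear image of a Gaussian triple, $\mathcal Z$ is automatically a centered Gaussian process, and the announced formula for $\zeta$ arises from the expansion
\[
\textsf{Cov}[\mathcal Z(u),\mathcal Z(v)]=\alpha(u)\alpha(v)\textsf{Var}[\mathcal N]+\beta(u)\beta(v)\textsf{Var}[\mathcal H]+\delta(u)\delta(v)\gamma(u,v)+\text{cross terms},
\]
using the independence $\mathcal N\perp(\mathcal H,\mathcal G)$ and the variances and covariances from the preceding corollaries together with $\textsf{Cov}[\mathcal H,\mathcal G(z)]$ computed above. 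The main obstacle will be computational rather than conceptual: patiently regrouping the six bilinear terms into the compact closed form stated in the theorem and matching coefficients via the identity $z/D_0(z)=w_0(z)/(1-\rho_0)$, where $D_0(z)=z-\lambda_0(1-g_0(z))$. A secondary subtlety is that the delta method has to be applied on the unbounded domain $[0,\infty)$, but this is handled by the uniform boundedness of $w_0$ and the strict separation of $D_0$ from zero that is inherited from stability.
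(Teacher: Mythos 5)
Your overall strategy---linearising $w$ as a functional of the triple $(\lambda,\mu,g)$ and composing the three previously established posterior-normality results, with the posterior independence of the arrival and service parts supplying the cross-covariance structure---is in substance the same first-order expansion the paper carries out, only packaged as one Hadamard-differentiability/delta-method step instead of the paper's explicit decomposition into $Z_{1;n}+Z_{2;n}+Z_{3;n}$ with Taylor expansions in $\lambda$ and in $g$. Stage (i) and the covariance bookkeeping (independence of $\mathcal N$ from $(\mathcal H,\mathcal G)$, Fubini for $\textsf{Cov}[\mathcal H,\mathcal G(z)]$) are fine in outline and match the paper's computation.

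However, stage (ii) contains a genuine flaw: the claim that $z-\lambda(1-g(z))$ stays uniformly bounded away from zero on $[0,\infty)$ is false, regardless of $\rho_0<1$ and of the $M$-truncation. Since $g(0)=1$, the denominator vanishes at $z=0$; indeed $D_0(z)=z-\lambda_0(1-g_0(z))\sim z(1-\rho_0)$ as $z\downarrow 0$. Consequently the partial derivative of $\Xi$ in the $g$-direction, which is $\lambda_0 w_0(z)/D_0(z)\sim \lambda_0/\bigl((1-\rho_0)z\bigr)$ near the origin, is unbounded, so the linear map you write down is not a bounded operator on $\bigl(C^{(1)}[0,\infty),\left\|\cdot\right\|_{\infty}\bigr)$ and plain sup-norm Hadamard differentiability fails there. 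The gap is repairable, but only by using information the sup-norm does not carry: the perturbations that actually occur in the $g$-coordinate have the form $\int_0^M e^{-zs}\,d(F-F')(s)=z\int_0^M\bigl(F(s)-F'(s)\bigr)e^{-zs}\,ds$ and hence vanish linearly at $z=0$ with constant controlled by $\left\|F-F'\right\|_{M}$, which exactly cancels the $1/z$ blow-up. So you must either differentiate at the level of $G$ (i.e. treat the composite map $G\mapsto w$, which is in effect what the paper's term $Z_{3;n}$ does, since there the factor $\sqrt{n}\,[g(z)-g_n^{*M}(z)]$ automatically carries the factor $z$), or prove differentiability tangentially to the subspace of functions vanishing at the origin at linear rate with respect to a suitably weighted norm; merely asserting "tangentially to the appropriate subspace" and appealing to stability does not deliver the separation from zero your argument relies on. A smaller point: your delta method is applied with the random centering $(\hat\lambda_n,\mu_n^*,g_n^{*M})$ rather than the fixed point of differentiation $(\lambda_0,\mu_0,g_0)$, which requires the standard but not automatic extended version of the delta method together with the a.s.\ consistency of the centering (the paper handles this by mean-value points $\bar\lambda\in[\lambda_0,\hat\lambda_n]$ plus the consistency results of its Section 4); this should be stated explicitly.
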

\begin{proof}
Taking a similar route of proving as in the proof of Theorem 3 in \citet{conti1999larges}, we begin with a decomposition of $\sqrt{n}\left[w(z)-w_n^{*M}(z)\right]$. The decomposition yields
\begin{align*}
\sqrt{n}\left[w(z)-w_n^{*M}(z)\right]=&\frac{-w(z)}{1-\rho}\sqrt{n}\left[\lambda\mu-\hat{\lambda}_n\mu_n^*\right]\\
&+z(1-\hat{\lambda}_n\mu_n^*)\sqrt{n}\left[\frac{1}{z-\lambda(1-g(z))}-\frac{1}{z-\hat{\lambda}_n(1-g(z))}\right]\\
&+z(1-\hat{\lambda}_n\mu_n^*)\sqrt{n}\left(\frac{1}{z-\hat{\lambda}_n(1-g(z))}-\frac{1}{z-\hat{\lambda}_n(1-g_n^{*M}(z))}\right)\\
&=:Z_{1;n}(z)+Z_{2;n}(z)+Z_{3;n}(z)
\end{align*}
Now, the three terms of the sum are investigated separately and it will be shown that they possess the same asymptotic distribution as objects whose asymptotic is easier to obtain. These objects will be tagged by an additional $^*$ superscript. First write
\begin{align*}
Z_{1;n}(z)=\frac{-w(z)}{1-\rho}\sqrt{n}\left[\mu_n^*(\lambda-\hat{\lambda}_n)+(\mu-\mu_n^*)(\lambda-\lambda_0)+\lambda_0(\mu-\mu_n^*)\right]
\end{align*}
and note that by the uniform posterior consistency results of the previous section and the continuity of the mapping $G\mapsto \int_0^M (1-G(s)) ds$ one has
\begin{align*}
&\lim\limits_{n\rightarrow\infty}\Pi_{BS\otimes\Gamma}\left[Z_{1;n}(z)\big\vert A_1^n,S_1^n\right]=\lim\limits_{n\rightarrow\infty}\Pi_{BS\otimes \Gamma}\left[\frac{-w_0(z)}{1-\rho_0} \sqrt{n}(\mu_0(\lambda-\hat{\lambda}_n)+\lambda_0(\mu-\mu_n^*))\bigg\vert A_1^n, S_1^n\right]\\
&=:\lim\limits_{n\rightarrow\infty} \Pi_{BS\otimes \Gamma}\left[Z_{1;n}^*(z)\big\vert A_1^n, S_1^n\right]=:\cL[\mathcal{Z}_1(z)].
\end{align*}

For $Z_{2;n}(z)$, note that the mapping $\lambda\mapsto\left[z-\lambda(1-g(z))\right]^{-1}$ is analytic in a suitably chosen neighborhood of $\lambda_0$. Its derivative is given by
\begin{align*}
\lambda\mapsto \frac{1-g(z)}{\left[z-\lambda(1-g(z))\right]^2}.
\end{align*}
Thus, a Taylor expansion of that mapping yields
\begin{align*}
Z_{2;n}(z)=(1-\hat{\lambda}_n\mu_n^*)z\frac{1-g(z)}{\left[z-\bar{\lambda}(1-g(z))\right]^2}\sqrt{n}\left(\lambda-\hat{\lambda}_n\right),
\end{align*}
for a suitably chosen $\bar{\lambda}\in[\lambda_0,\hat{\lambda}_n]$. Therefore, using consistency results and continuous mapping, one gets
\begin{align*}
&\lim\limits_{n\rightarrow\infty}\Pi_{BS\otimes\Gamma}\left[Z_{2;n}(z)\big\vert S_1^n, A_1^n\right]
=\lim\limits_{n\rightarrow\infty}\Pi_{\Gamma}\left[(1- \rho_0)z\frac{1-g_0(z)}{\left[z-\lambda_0(1-g_0(z))\right]^2}\sqrt{n}\left(\lambda-\hat{\lambda}_n\right)\bigg\vert  A_1^n\right]\\
&=:\lim\limits_{n\rightarrow\infty}\Pi_{\Gamma}\left[Z_{2;n}^* \big\vert A_1^n\right]=:\cL[\mathcal{Z}_2(z)].
\end{align*}

Another Taylor expansion for the mapping $x\mapsto \left[z-\hat{\lambda}_n(1-x)\right]^{-1}$ and analogous reasoning as before yields

\begin{align*}
&\lim\limits_{n\rightarrow\infty}\Pi_{BS\otimes\Gamma}\left[Z_{3;n}(z)\big\vert S_1^n, A_1^n\right]
=\lim\limits_{n\rightarrow\infty}\Pi_{BS}^{(M)}\left[- \frac{w_0(z) \lambda_0}{\lambda_0(1-g_0(z))-z} \sqrt{n}\left[g(z)-g_n^{*M}(z)\right] \bigg\vert S_1^n\right]\\
&=:\lim\limits_{n\rightarrow\infty}\Pi_{BS}^{(M)}\left[Z_{3;n}^* \big\vert S_1^n\right]=:\cL[\mathcal{Z}_3(z)].
\end{align*}

Now, the convergence of the posterior law of the waiting time LST follows from the previous results of the present section. What remains is the calculation of the covariance structure. This, in turn, is easily obtained by above decomposition and the assumed independence of the arrivals and services or their prior laws, respectively.
\begin{align*}
\textsf{Cov}\left[\sum\limits_{i=1}^3 \mathcal{Z}_i(u),\sum\limits_{i=1}^3 \mathcal{Z}_i(v)\right]&=\sum\limits_{i=1}^3\textsf{Cov}\left[\mathcal{Z}_i(u),\mathcal{Z}_i(v)\right]+\sum\limits_{i\neq j}\textsf{Cov}\left[\mathcal{Z}_i(u),\mathcal{Z}_j(v)\right]\\
&=\sum\limits_{i=1}^3\textsf{\textsf{Cov}}\left[\mathcal{Z}_i(u),\mathcal{Z}_i(v)\right]\\
&+\textsf{\textsf{Cov}}\left[\mathcal{Z}_1(u),\mathcal{Z}_2(v)\right]+\textsf{Cov}\left[\mathcal{Z}_2(u),\mathcal{Z}_1(v)\right]\\
&+\textsf{Cov}\left[\mathcal{Z}_1(u),\mathcal{Z}_3(v)\right]+\textsf{Cov}\left[\mathcal{Z}_3(u),\mathcal{Z}_1(v)\right]
\end{align*}
By the previous results of this section it follows
\begin{align*}
\sum\limits_{i=1}^3 \textsf{Cov}\left[\mathcal{Z}_i(u), \mathcal{Z}_i(v)\right]
&=\frac{w_0(u)w_0(v)}{(1-\rho_0)^2}\left[\lambda_0^2\eta+\frac{\mu_0^2}{\lambda_0^2}\right]\\
&+\lambda_0^{-2}\frac{w_0(u)(1-g_0(u))}{\lambda_0(1-g_0(u))-u}\times\frac{w_0(v)(1-g_0(v))}{\lambda_0(1-g_0(v))-v}\\
&+ \frac{w_0(u)w_0(v) \lambda_0^2}{\left[\lambda_0(1-g_0(u))-u\right]\left[\lambda_0(1-g_0(v))-v\right]}\gamma(u,v).
\end{align*}
Furthermore, by the independence assumption of the prior laws of the inter-arrival rate and the service time distribution, one has
\begin{align*}
\textsf{Cov}\left[\mathcal{Z}_1(u),\mathcal{Z}_2(v)\right]=\frac{\mu_0}{\lambda_0^2(1-\rho_0)}\times \frac{w_0(u)w_0(v) (g_0(v)-1)}{v-\lambda_0(1-g_0(v))}.
\end{align*}

Furthermore, using the previous results of the present section and Fubini's theorem, one has

\begin{align*}
&\textsf{Cov}\left[\mathcal{Z}_1(u),\mathcal{Z}_3(v)\right]=\frac{w_0(u)w_0(v)\lambda_0^2}{(1-\rho_0)(\lambda_0(1-g_0(v))-v)}
\E[\mathcal{H}\times\mathcal{G}(v)]\\
&=\frac{w_0(u)w_0(v)\lambda_0^2}{(1-\rho_0)(\lambda_0(1-g_0(v))-v)}\\
& \qquad \times \E\left[\int_0^{M} (1- G_0(s))W(A_0(s))ds \times v\int_0^{M}(G_0(t)-1)W(A_0(t))e^{-tv}dt \right]\\
&=\frac{vw_0(u)w_0(v)\lambda_0^2}{(1-\rho_0)(\lambda_0(1-g_0(v))-v)}\int_0^{M} \int_0^{M} e^{-tv}h(s,t) ds dt\\
&=-\frac{w_0(u)w_0(v)}{(1-\rho_0)^2} \lambda_0^2w_0(v)\int_{[0,M]^2} e^{-tv}h(s,t) d(s,t).
\end{align*}
Finally, compounding above covariance structures yields $\zeta(\cdot,\cdot)$.
\end{proof}

The covariance structure $\zeta(\cdot,\cdot)$ depends on the unknown objects. However, one can use the suggested estimators and plug them in place of the true ones. This might be helpful to implement the problem and the provided consistency results ensure accuracy as long as the sample size is large enough.

\textbf{Funding:} This work was supported by the Deutsche Forschungsgemeinschaft (German Research
Foundation) within the programme "Statistical Modeling of Complex Systems and
Processes---Advanced Nonparametric Approaches", grant GRK 1953.\\

\begin{figure}[htbp]
	\begin{minipage}{0.6\textwidth} 
	\textbf{Corresponding Author}\\
	Dr. Cornelia Wichelhaus\\
	Technische Universit{\"a}t Darmstadt\\
	Schlossgartenstra{\ss}e 7\\
  64289 Darmstadt\\
	Germany\\
	wichelhaus@mathematik.tu-darmstadt.de
	\end{minipage}
	\hfill
	\begin{minipage}{0.6\textwidth}
	Moritz von Rohrscheidt\\
	Ruprecht-Karls Universit{\"a}t Heidelberg\\
	Berliner Stra{\ss}e 41-49\\
  69120 Heidelberg\\
	Germany\\
	rohrscheidt@uni-heidelberg.de
	\end{minipage}
\end{figure}
\bibliographystyle{humannat}
\bibliography{mybib}

\end{document}